\title{Large implies Henselian}
\author{Erik Walsberg}
\email{erik.walsberg@gmail.com}
\DeclareFontFamily{U}{fsy}{}
\DeclareFontShape{U}{fsy}{m}{n}{<->s*[.9]psyr}{}
\DeclareSymbolFont{der@m}{U}{fsy}{m}{n}
\DeclareMathSymbol{\der}{\mathord}{der@m}{182}
\DeclareFontFamily{U}{BOONDOX-calo}{\skewchar\font=45 }
\DeclareFontShape{U}{BOONDOX-calo}{m}{n}{
  <-> s*[1.05] BOONDOX-r-calo}{}
\DeclareFontShape{U}{BOONDOX-calo}{b}{n}{
  <-> s*[1.05] BOONDOX-b-calo}{}
\DeclareMathAlphabet{\mathcalboondox}{U}{BOONDOX-calo}{m}{n}
\SetMathAlphabet{\mathcalboondox}{bold}{U}{BOONDOX-calo}{b}{n}
\DeclareMathAlphabet{\mathbcalboondox}{U}{BOONDOX-calo}{b}{n}
\DeclareSymbolFont{imag@m}{OT1}{cmr}{m}{ui}
\DeclareMathSymbol{\imag}{\mathord}{imag@m}{105}
\DeclareMathOperator*{\forkindep}{\raise0.2ex\hbox{\ooalign{\hidewidth$\vert$\hidewidth\cr\raise-0.9ex\hbox{$\smile$}}}}
\newcommand{\Sa}[1]{\ensuremath{\mathscr{#1}}}
\newcommand{\mfrak}{\mathfrak{m}}
\newcommand{\Gal}{\operatorname{Gal}}
\newcommand{\Spec}{\operatorname{Spec}}
\newcommand{\Hom}{\operatorname{Hom}}
\newcommand{\Frac}{\operatorname{Frac}}
\newcommand{\Chara}{\operatorname{Char}}
\newcommand{\jac}{\operatorname{Jac}}
\newcommand{\prc}{\mathrm{PRC}}
\newcommand{\pac}{\mathrm{PAC}}
\newtheorem*{claim-star}{Claim}
\newtheorem{theorem}{Theorem}[section] 
\newtheorem{lemma}[theorem]{Lemma}
\newtheorem{prop-def}[theorem]{Proposition-Definition}
\newtheorem{corollary}[theorem]{Corollary}
\newtheorem{fact}[theorem]{Fact}
\newtheorem{fact-eh}[theorem]{Fact(?)}
\newtheorem{proposition}[theorem]{Proposition}
\newtheorem{proposition-eh}[theorem]{Proposition(?)}
\newtheorem*{theorem-star}{Theorem}
\newtheorem*{conjecture-star}{Conjecture}
\newtheorem*{lemma-star}{Lemma}
\newtheorem*{lemma6.2alt}{Lemma \ref*{confusing-v2}$'$}
\theoremstyle{definition}
\theoremstyle{remark}
\newcommand{\A}{\mathbb{A}}
\newcommand{\Q}{\mathbb{Q}}
\newcommand{\R}{\mathbb{R}}
\newcommand{\K}{\mathbb{K}}
\newcommand{\C}{\mathbb{C}}
\newcommand{\cT}{\mathscr{T}}
\newcommand{\meno}{\medskip \noindent}
\begin{document}

\title{Nash maps over large fields}
\maketitle

\begin{abstract}
In this short note we give some corollaries to the polynomial inverse function theorem for large fields.
We prove inverse and implicit function theorems for Nash maps over large fields, characterize large fields as fields satisfying inverse or implicit function theorems, give inverse and implicit function theorems for gt-henselian field topologies, and show that definable functions in various logically tame fields of characteristic zero are generically Nash.
We prove a version of Krasner's lemma for large fields and describe how Nash functions give a natural proof of the well-known fact that a large field $K$ is existentially closed in $K(\!(t)\!)$.
\end{abstract}

\section{Introduction}
Let $K$ be a field.
Then $K$ is \textbf{large} if it satisfies one of the following equivalent conditions.
\begin{enumerate}
\item Any smooth $1$-dimensional $K$-variety with a $K$-point has infinitely many $K$-points.
\item If $f\in K[x,y]$ and $(a,b)\in K^2$ satisfy $f(a,b)=0\ne\der f/\der y (a,b)$, then $f$ has infinitely many zeros in~$K^2$.
\item The \'etale-open topology on $K$ is not discrete.
\item $K$ is existentially closed in $K(\!(t)\!)$.
\item $K$ is elementarily equivalent to the fraction field of a henselian local domain that is not a field. 
(See \cite[Thm.~A]{large->henselian}.)
\end{enumerate}
Let $V$ range over $K$-varieties.
The \textbf{\'etale-open topology} on $V(K)$ is the topology with basis the collection of sets of the form $f(W(K))$ for a $K$-variety $W$ and an \'etale morphism $f\colon W\to V$.
This topology was introduced in~\cite{firstpaper}.
Examples of large fields include separably closed fields, real closed fields, henselian fields, fields with pro-$p$ absolute Galois group,  $\pac$ fields (e.g. pseudofinite fields and infinite algebraic extensions of finite fields), $\prc$ fields (e.g. the maximal totally real extension of $\Q$), and fields that satisfy a local-global principle with respect to a set of places~\cite[1.B]{Pop-little}.
The following polynomial inverse function theorem for large fields was proven in~\cite[Thm.~B]{large->henselian}.

\begin{fact}\label{fact:etale-homeo}
Suppose that $K$ is large and not separably closed.
If $V \to W$ is an \'etale morphism of $K$-varieties then $V(K) \to W(K)$ is a local homeomorphism in the \'etale-open topology.
\end{fact}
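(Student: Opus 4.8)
The plan is to establish the three ingredients of ``local homeomorphism'' separately — continuity, openness, and local injectivity — and to notice that only the last one needs the hypotheses on $K$.

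First I would dispatch continuity and openness, which are purely formal. For continuity: given an \'etale $\phi\colon X\to W$, the base change $X\times_W V\to V$ is again \'etale and the image of $(X\times_W V)(K)$ in $V(K)$ is exactly $g^{-1}(\phi(X(K)))$, so $g$ pulls basic \'etale-open sets back to \'etale-open sets. For openness: given an \'etale $\psi\colon X\to V$, the composite $g\circ\psi\colon X\to W$ is \'etale and $g(\psi(X(K)))=(g\circ\psi)(X(K))$, so $g$ pushes basic \'etale-open sets forward to \'etale-open sets. Consequently, for any \'etale-open $U\subseteq V(K)$ the set $g(U)$ is \'etale-open and the restriction $g|_U\colon U\to g(U)$ is automatically continuous and open; so the whole theorem comes down to producing, around each $v\in V(K)$, an \'etale-open $U\ni v$ on which $g$ is injective.

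For local injectivity I would use the diagonal. As $g$ is \'etale it is unramified and separated (the latter because $V$ is a separated $K$-scheme), so $\Delta\colon V\hookrightarrow V\times_W V$ is a clopen immersion; hence $Z:=(V\times_W V)\setminus\Delta(V)$ is closed in $V\times_W V$, and since $V\times_W V\hookrightarrow V\times V$ is a closed immersion (a base change of the diagonal $W\to W\times W$, which is closed because $W$ is separated) the subscheme $Z$ is Zariski-closed in $V\times V$. So $Z(K)$ is closed in the \'etale-open topology on $(V\times V)(K)$, and by construction $Z(K)=\{(v_1,v_2):g(v_1)=g(v_2),\ v_1\neq v_2\}$. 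Now comes the step that actually uses ``$K$ large and not separably closed'': the \'etale-open topology on $(V\times V)(K)$ is the product of the \'etale-open topologies on the two factors. Granting this, fix $v$; then $(v,v)\notin Z(K)$, so some product-open neighborhood $U_1\times U_2$ of $(v,v)$ is disjoint from $Z(K)$, and $U:=U_1\cap U_2$ is then an \'etale-open set on which $g$ is injective. Combined with the first paragraph, $g|_U\colon U\to g(U)$ is a homeomorphism onto an \'etale-open subset of $W(K)$, so $g$ is a local homeomorphism.

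The hard part is the parenthetical claim that the \'etale-open topology respects finite products when $K$ is large and not separably closed — equivalently, that near each $K$-point this topology is induced, via a Zariski-local closed embedding into affine space, from the coordinatewise $\mathbb{A}^1$-\'etale-open topology. This is precisely what fails for separably closed $K$: there the \'etale-open topology is the Zariski topology, a variety is not the product of its factors, and the theorem is genuinely false — the squaring map $\mathbb{G}_m\to\mathbb{G}_m$ in characteristic $\neq 2$ is \'etale but nowhere locally injective for the cofinite topology. I expect to get the product statement by first proving, after reducing to standard-\'etale form $f(a,x)=0$ with $\partial f/\partial x$ invertible, a one-variable implicit function theorem: near a simple zero the zero set of $f$ is locally the graph of a continuous function of $a$. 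That is the step where largeness genuinely enters — one can feed in $K$ being existentially closed in $K(\!(t)\!)$, or elementarily equivalent to the fraction field of a henselian local domain, to transport Hensel's lemma — and it is what I would prove first, then bootstrap the product statement and the local manifold structure of $V(K)$ from it.
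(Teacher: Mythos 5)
Your proposal has a clean and correct outer structure, but the part you flag as ``the hard part'' is exactly where the content of the theorem lies, and you do not actually prove it. Before getting to that, note for calibration that this paper does not contain a proof of Fact~\ref{fact:etale-homeo}; it is imported as \cite[Thm.~B]{large->henselian}, so there is no in-paper argument to compare against.

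The formal parts are fine. Continuity and openness of $V(K)\to W(K)$ for \'etale $V\to W$ are indeed built into the definition of the \'etale-open topology and of a system of topologies, exactly as you say. The diagonal reduction is also correct: $g$ \'etale and separated makes $\Delta\colon V\to V\times_W V$ a clopen immersion, $V\times_W V\hookrightarrow V\times V$ is a closed immersion since $W$ is separated, so the locus $Z$ where $g$ fails to be injective is Zariski-closed in $V\times V$ and hence $Z(K)$ is \'etale-open-closed in $(V\times V)(K)$. All of this is sound.

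The gap is the claim that, for $K$ large and not separably closed, the \'etale-open topology on $(V\times V)(K)$ equals the product of the \'etale-open topologies on the factors. Only the easy direction (\'etale-open refines product, because projections are morphisms and hence continuous) is formal. The direction you actually need --- that an \'etale-open set containing $(v,v)$ contains a box $U_1\times U_2$ --- is precisely as hard as the theorem you are proving, and I am not aware of a proof of it that does not already route through the local-homeomorphism statement or an equivalent. Your own plan acknowledges this: you say you would first prove a one-variable implicit function theorem from largeness (via existential closedness in $K(\!(t)\!)$ or Theorem A of \cite{large->henselian}) and then ``bootstrap the product statement.'' But that plan makes the whole diagonal detour redundant: once you have a one-variable IFT saying that near a simple root the zero set of $f(a,t)$ is locally the graph of a function of $a$, the standard-\'etale presentation $\operatorname{Spec}(B[t]_g/(f))\to\operatorname{Spec} B$ gives local injectivity of $g$ on $K$-points directly, with no need for the product claim at all. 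So the structure you propose is circuitous, and the single step carrying the mathematical weight --- deducing the one-variable IFT for the \'etale-open topology from largeness, say by working in a saturated elementary extension carrying a henselian local ring with the right fraction field and transporting Hensel's lemma across the $R$-adic topology --- is announced rather than carried out. That is the genuine missing piece; until it is filled in, the argument does not close.

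Your separably-closed counterexample (squaring on $\mathbb{G}_m$ in odd characteristic against the cofinite topology) is correct and does illustrate why the hypothesis is needed.
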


The assumption that $K$ is not separably closed is necessary as the \'etale-open topology over a separably closed field is the Zariski topology.

\medskip
One can show that $\R$ is large by assuming that $f \in \R[x,y]$ and $(a,b) \in \R^2$ satisfy the conditions of (2) above and applying the implicit function theorem to produce a neighborhood $U$ of $a$ and analytic $s \colon U \to \R$ such that $s(a) = b$ and $f(a^*, s(a^*)) = 0$ for all $a^* \in U$.
We show that in fact any non-separably closed large field satisfies an implicit function theorem with respect to the \'etale-open topology and we characterize large fields as those fields which satisfy an implicit function theorem, see Proposition~\ref{prop:nash-implicit} and Section~\ref{section:char}.
Both parts involve Nash functions.

\medskip
Suppose $O$ is a semialgebraic open subset of $\R^m$ and fix $f\colon O \to \R^m$.
Then $f$ is Nash if $f$ is both analytic and algebraic, equivalently if $f$ is $C^\infty$ and semialgebraic~\cite[\S{}8]{real-algebraic-geometry}.
Artin-Mazur showed that $f$ is Nash if and only if there is an $\R$-variety $V$, an \'etale morphism $e\colon V \to \A^m$, and a morphism $h\colon V \to \A^n$ such that $e$ gives a homeomorphism $P \to O$ for some open $P \subseteq V(\R)$, and $f$ agrees with $h\circ (e|_P)^{-1}$~\cite[Thm.~8.4.4]{real-algebraic-geometry}.

\medskip
Suppose that $K$ is large and not separably closed and let $V, V^*$ be $K$-varieties and $O, O^*$ be a nonempty \'etale-open subset of $V(K), V^*(K)$, respectively.
Then $f \colon O \to O^*$ is a \textbf{Nash map} if there is a $K$-variety $X$, \'etale morphism $e \colon X \to V$, morphism $h \colon X \to W$, and \'etale-open subset $P$ of $X(K)$ such that $e$ restricts to a homeomorphism $P \to O$ and $f = h \circ (e|_P)^{-1}$.
We refer to $(X,e,h,P)$ as \textbf{Nash data} for $f$.
A {\bf Nash function} is a Nash map with codomain $K$.
For example $\sqrt[n]{x}$ is a Nash function on an open neighborhood of $1$ if and only if $n$ is not divisible by the characteristic of $K$.

\medskip
We show how Nash functions can be used to naturally prove the well-known fact that a large field $K$ is existentially closed in $K(\!(t)\!)$ in Section~\ref{section:(())}.

\subsection{Conventions and background}
Throughout $K$ is a field.
A $K$-variety is a separated reduced scheme of finite type over $K$.
We let $V,W,V^*,W^*$, etc, be $K$-varieties and let $V(K)$ be the set of $K$-points of a $K$-variety.
A morphism $V \to W$ is a morphism of $K$-varieties.
We let $T_p V$ be the Zariski tangent space of a smooth $K$-variety $V$ at a $K$-point $p$.
Recall that if $W$ is smooth then $f\colon V \to W$ is \'etale if and only if $V$ is smooth and $T_p V \to T_{f(p)}W$ is an isomorphism for all $p \in V$.
We let $\A^n$ be $n$-dimensional affine space over $K$, i.e. $\A^n = \Spec K[x_1,\ldots,x_n]$.
Recall that $\A^n(K)$ is canonically identified with $K^n$.

\medskip
We now recall some background on the \'etale-open topology.
Equip $V(K)$ and $W(K)$ with the \'etale-open topology.
Then we have:
\begin{enumerate}
\item $V(K) \to W(K)$ is continuous for any $V \to W$.
\item $V(K) \to W(K)$ is a topological closed embedding when $V \to W$ is a scheme-theoretic closed immersion.
\item $V(K) \to W(K)$ is a topological open embedding when $V \to W$ is a scheme-theoretic open immersion.
\end{enumerate}

Fact~\ref{fact:slice}(1) is immediate from (1) and Fact~\ref{fact:slice}(2) follows from Fact~\ref{fact:slice}(1).

\begin{fact}\label{fact:slice}
Suppose that $U$ is an \'etale-open subset of $V(K) \times W(K) = (V \times W)(K)$.
Then we have the following.
\begin{enumerate}[leftmargin=*]
\item If $a \in V(K)$ then $\{ b \in W(K) : (a,b) \in O\}$ is an \'etale-open subset of $W(K)$.
\item The image of $U$ under the projection $V(K) \times W(K) \to W(K)$ is \'etale-open.
\end{enumerate}
\end{fact}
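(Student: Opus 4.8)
The plan is to obtain both parts for free from two basic properties of the étale-open topology recorded above: that every morphism of $K$-varieties induces a continuous map on $K$-points, and that (being a topology) an arbitrary union of étale-open sets is again étale-open.

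For part (1), fix $a \in V(K)$, regarded as a morphism $\bar a \colon \Spec K \to V$. The first step is to form the morphism $\iota_a \colon W \to V \times W$ obtained by pairing the composite $W \to \Spec K \xrightarrow{\bar a} V$ with $\mathrm{id}_W$, so that $\iota_a$ sends a $K$-point $b$ to $(a,b)$; equivalently, since $\bar a$ is a closed immersion, $\iota_a$ is the inclusion of the "fiber" $\{a\} \times W$. Because $\iota_a$ is a morphism of $K$-varieties, the induced map $\iota_a \colon W(K) \to (V \times W)(K)$ is continuous for the étale-open topologies, so the preimage $\iota_a^{-1}(U)$ is étale-open in $W(K)$. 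But $\iota_a^{-1}(U) = \{\, b \in W(K) : (a,b) \in U \,\}$, which is exactly the set in the statement. (One could instead invoke that $\iota_a$ is a closed immersion and hence a topological closed embedding, but continuity alone is all that is needed.)

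For part (2), write $\pi$ for the projection $V(K) \times W(K) \to W(K)$. The key observation is the elementary set-theoretic identity
\[
\pi(U) = \bigcup_{a \in V(K)} \{\, b \in W(K) : (a,b) \in U \,\},
\]
which holds because $b \in \pi(U)$ precisely when $(a,b) \in U$ for some $a \in V(K)$. By part (1) each set occurring in this union is étale-open in $W(K)$, and an arbitrary union of open sets is open; therefore $\pi(U)$ is étale-open.

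I do not expect any genuine obstacle. The only step that warrants a moment's care is the construction of $\iota_a$ from the $K$-point $a$ together with the verification that the induced map on $K$-points really is $b \mapsto (a,b)$, and even that is routine once $K$-rational points are identified with morphisms $\Spec K \to V$. (Note that the "$O$" in the displayed set in the statement of part (1) should read "$U$".)
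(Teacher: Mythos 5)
Your proof is correct and follows exactly the route the paper sketches: part (1) is obtained by applying continuity (property (1) of the étale-open topology) to the morphism $W\to V\times W$ determined by the $K$-point $a$, and part (2) follows from part (1) by writing the projection image as the union of the slices over all $a\in V(K)$. You also correctly flag the typo $O$ for $U$ in the statement.
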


Fact~\ref{fact:z-dense} is \cite[Prop.~5]{topological_proofs}.

\begin{fact}\label{fact:z-dense}
If $K$ is large and $V$ is smooth and irreducible then any nonempty \'etale-open subset of $V(K)$ is Zariski dense in $V$.
\end{fact}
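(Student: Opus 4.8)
The plan is to reduce the statement, in two steps, to the defining property of large fields for curves. First, because the \'etale-open topology on $V(K)$ has a basis of sets $e(X(K))$ for \'etale $e \colon X \to V$, a nonempty \'etale-open set contains a nonempty such basic set, so it is enough to show every nonempty $e(X(K))$ is Zariski dense in $V$. Fixing such an $e$ and a point $q \in X(K)$ (one exists since $e(X(K)) \neq \emptyset$), I would replace $X$ by the connected component $X_0$ containing $q$; as $X$ is smooth, $X_0$ is open, smooth and irreducible, $e|_{X_0}$ is still \'etale — hence dominant, its image being open and $V$ irreducible — and $\dim X_0 = \dim V$. If $e(X_0(K))$ were not Zariski dense it would lie in some proper closed $Z \subsetneq V$, whence $X_0(K) \subseteq (e|_{X_0})^{-1}(Z)$, a proper closed subset of $X_0$ by dominance. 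So the whole statement follows once we know: \emph{if $K$ is large and $X$ is a smooth irreducible $K$-variety with $X(K) \neq \emptyset$, then $X(K)$ is Zariski dense in $X$.}

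To prove this sub-statement, fix $q \in X(K)$ and a proper closed $Z \subsetneq X$; I must find a $K$-point of $X$ outside $Z$, and may assume $q \in Z$. Let $n = \dim X$. Since a smooth variety admits \'etale coordinates Zariski-locally, choose a Zariski-open $U \ni q$ and an \'etale $\psi \colon U \to \A^n$, and set $a = \psi(q) \in \A^n(K) = K^n$. The image $\psi(Z \cap U)$ is constructible of dimension $< n$, so its Zariski closure $\bar W \subsetneq \A^n$ is a proper closed set containing $a$; the directions $v$ for which the line $a + \A^1 v$ lies in $\bar W$ form a proper closed subset of $\Pp^{n-1}$, and since a large field is infinite, $\Pp^{n-1}(K)$ is Zariski dense, so there is a $K$-rational $v$ with $\ell := a + \A^1 v \cong \A^1$ meeting $\bar W$ in a finite set. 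Then $D := \psi^{-1}(\ell) = U \times_{\A^n} \ell$ is \'etale over $\ell$, hence smooth of dimension $1$, and passes through $q$; let $C$ be its connected component through $q$ — a smooth irreducible curve with $q \in C(K)$. As $\psi$ is quasi-finite, $C \cap Z \subseteq \psi^{-1}(\ell \cap \bar W)$ is finite, so condition~(1) in the definition of largeness, applied to $C$, gives that $C(K)$ is infinite, and therefore $C(K) \setminus Z \neq \emptyset$, producing the desired $K$-point of $X$ off $Z$.

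The only real content is the sub-statement, and within its proof the crux is the slicing step: pulling back a single generic $K$-rational line through $q$ along \'etale coordinates to land on a smooth curve that still escapes $Z$, after which largeness for curves finishes the argument. The ancillary facts used — that smooth varieties are \'etale-locally affine space, that large fields are infinite so $\Pp^{n-1}(K)$ is Zariski dense, and that connected components of smooth varieties are open and irreducible — are all standard and cause no trouble.
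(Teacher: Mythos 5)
The paper does not prove this fact; it is quoted from an external reference, so there is no in-paper argument to compare against. Your proof is correct and is essentially the standard one. The two reductions are clean: first to basic opens $e(X(K))$, then, by passing to the irreducible component $X_0$ through a chosen $q\in X(K)$ and using that $e|_{X_0}$ is dominant, to the sub-statement that $X_0(K)$ is Zariski dense in $X_0$. That sub-statement is the well-known density theorem for large fields underlying the citation, and your slicing proof of it is the usual one: take \'etale coordinates $\psi$ near $q$ with $a=\psi(q)$, note that the directions $v$ with $a+\A^1 v\subseteq\overline{W}$ form a proper closed subset of $\Pp^{n-1}$ (proper, since otherwise $\overline{W}$ would contain every line through $a$ and hence equal $\A^n$; closed, since it is the complement of the image of an open set under the flat projection from the blow-up), use that large fields are infinite to pick a good $K$-rational direction, and then the connected component of $\psi^{-1}(\ell)$ through $q$ is a smooth irreducible $K$-curve with a $K$-point meeting $Z$ in only finitely many points, so condition (1) of largeness finishes. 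I see no gap; all the auxiliary facts you invoke (étale base change, quasi-finiteness bounding $C\cap Z$, connected smooth implies irreducible) are used correctly.
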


Fact~\ref{fact:hd} is \cite[Thm.~7.6]{firstpaper}.

\begin{fact}\label{fact:hd}
If $K$ is not separably closed then the \'etale-open topology on $V(K)$ is hausdorff when $V$ is quasi-projective.
\end{fact}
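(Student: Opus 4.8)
The plan is to reduce to the case $V=\A^1$ and there use that, for $K$ not separably closed, the \'etale-open topology makes $K$ into a topological field, which then forces hausdorffness. For the reduction: a quasi-projective $V$ is locally closed in some $\Pp^n$, so by the closed- and open-embedding properties of the \'etale-open topology recalled above, $V(K)$ topologically embeds into $\Pp^n(K)$; as subspaces of hausdorff spaces are hausdorff, it suffices to treat $V=\Pp^n$. If $K$ is finite then $K$ is not large, so the \'etale-open topology on $\A^1(K)$ is discrete (one of the formulations of largeness recalled in the introduction), and this propagates to $\A^n(K)$ and $\Pp^n(K)$ via the continuity of the coordinate projections and the standard affine charts; so we may assume $K$ infinite. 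Any two distinct points of $\Pp^n(K)$ lie in a common affine chart $\cong\A^n$ --- choose a $K$-linear form not vanishing on lifts of the two points, possible since $K$ is infinite --- and that chart is \'etale-open in $\Pp^n(K)$, so it is enough to separate the two points inside it, i.e.\ to treat $V=\A^n$. Lastly the coordinate projections $\A^n\to\A^1$ are continuous and separate distinct points of $K^n$ coordinatewise, so pulling back disjoint opens reduces us to $V=\A^1$.

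For $V=\A^1$ the plan is to show that, when $K$ is not separably closed, the \'etale-open topology on $\A^1(K)=K$ is a field topology --- addition, multiplication and inversion are continuous --- and then finish formally. The formal step: $\overline{\{0\}}$ is an ideal of $K$ (an additive subgroup, and stable under each continuous map $y\mapsto xy$, which fixes $0$), hence is $\{0\}$ or $K$; it cannot be $K$, for then translation-invariance of the topology would force $\emptyset$ and $K$ to be the only open sets, whereas $\G_m(K)=K\setminus\{0\}$ is \'etale-open, nonempty, and proper; so $\{0\}$, and by translation every point, is closed, and a $T_1$ topological group is hausdorff.

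The hard part will be the field-topology claim, and this is precisely where non-separable-closedness is used: over a separably closed field the \'etale-open topology on $K$ is the Zariski, i.e.\ cofinite, topology, which on an infinite field is not even a ring topology. The real content is to produce, around $0$, a neighbourhood basis of \'etale-open sets along which $+$, $\times$, and inversion are continuous; the delicate point is continuity of $+$ and $\times$ as maps out of $K\times K$ equipped with the \emph{product} topology, i.e.\ controlling the \'etale-open topology on $\A^2(K)$ against the product topology on $K\times K$. This is the foundational input underlying \cite[Thm.~7.6]{firstpaper}; everything else above is soft and uses only the continuity and embedding properties of the \'etale-open topology recalled in the excerpt.
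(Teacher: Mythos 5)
Your reductions (quasi-projective $\to\Pp^n\to\A^n\to\A^1$, with the finite-field and separably-closed edge cases handled) are correct and are the kind of soft step one would indeed carry out. The problem is that you have deferred the entire content of the statement to the claim that the \'etale-open topology on $K$ is a \emph{field} topology when $K$ is not separably closed, and you have not proven it. That claim is not something you can treat as an available ``foundational input.'' The paper does not prove Fact~\ref{fact:hd} at all --- it simply cites \cite[Thm.~7.6]{firstpaper} --- so there is nothing here to fall back on, and appealing to \cite[Thm.~7.6]{firstpaper} at that point in your argument is circular, since that theorem \emph{is} the Hausdorffness statement you are trying to prove.

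More seriously, there is good reason to doubt that the field-topology claim is even a known theorem. You correctly identify the crux: the \'etale-open topology on $\A^2(K)$ is a priori finer than the product of two copies of the \'etale-open topology on $K$, and continuity of the morphisms $+,\times\colon\A^2\to\A^1$ for the \'etale-open topology on the source does not give continuity for the product topology. If one could show the \'etale-open topology on $K$ is a (Hausdorff, non-discrete) field topology whenever $K$ is large and not separably closed, then by Lemma~\ref{lem:char} (since \'etale maps are local homeomorphisms for the \'etale-open topology by Fact~\ref{fact:etale-homeo}) that field topology would automatically be gt-henselian, and this would settle the open question recorded after Corollary~\ref{cor:char} of whether every large field admits a gt-henselian field topology. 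So your key step is either false or at least a substantially open problem, and the actual proof of Hausdorffness must proceed by some more direct construction of disjoint \'etale-open neighbourhoods of two points of $K$ (using the existence of a nontrivial finite separable extension to manufacture \'etale covers whose images separate points), not by first establishing a topological field structure.
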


We recall some basic facts about Nash functions.
Fact~\ref{fact:nash basic} is \cite[Prop.~7.1, 7.2, 7.3]{large->henselian}.

\begin{fact}\label{fact:nash basic}
\hspace{.00001cm}
\begin{enumerate}[leftmargin=*]
\item Nash maps are closed under composition.
\item If $V,V^*_1,\ldots,V^*_n$ are smooth $K$-varieties and $O\subseteq V(K), O^*_1\subseteq V^*_1(K), \ldots, O^*_n\subseteq V^*_n(K)$ are nonempty \'etale-open sets then a map $O \to O^*_1\times\cdots\times O^*_n$ is Nash if and only if each component function $O \to O^*_i$ is Nash.
\item If $O$ is a nonempty \'etale-open subset of $V(K)$ then Nash functions $O \to K$ form a $K$-algebra under the pointwise operations.
\end{enumerate}
\end{fact}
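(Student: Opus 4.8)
The plan is to prove (1) first, since it contains the only substantial construction --- building Nash data for a composite via fibre products --- and then to obtain (2) and (3) as formal consequences. The one topological input I will use beyond the facts recalled above is that \emph{an étale morphism $g\colon V'\to V$ induces an open map $V'(K)\to V(K)$}; this is immediate from the definition of the étale-open topology, because if $U=\bigcup_i f_i(W_i(K))$ is a decomposition of an étale-open $U\subseteq V'(K)$ into basic open sets with each $f_i\colon W_i\to V'$ étale, then $g(U)=\bigcup_i(g\circ f_i)(W_i(K))$ is again a union of basic étale-open sets.

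For (1), let $f\colon O\to O'$ and $g\colon O'\to O''$ be Nash with $O\subseteq V(K)$, $O'\subseteq W(K)$, $O''\subseteq W'(K)$, and with Nash data $(X,e,h,P)$ for $f$ (so $e\colon X\to V$ étale, $h\colon X\to W$) and $(Y,e',h',Q)$ for $g$ (so $e'\colon Y\to W$ étale, $h'\colon Y\to W'$). I would form the fibre product $Z:=X\times_W Y$ along $h$ and $e'$, with projections $\pi_X\colon Z\to X$ and $\pi_Y\colon Z\to Y$. Then $\pi_X$ is étale (base change of $e'$), hence so is $e\circ\pi_X\colon Z\to V$, and $Z$ is a $K$-variety: it is reduced, being étale over the reduced scheme $X$, and it is clearly separated and of finite type. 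Put $R:=\pi_X^{-1}(P)\cap\pi_Y^{-1}(Q)$, which is étale-open in $Z(K)$ since $\pi_X,\pi_Y$ are continuous. Using that $e|_P$ and $e'|_Q$ are bijections onto $O$ and $O'$ and that $h(P)=f(O)\subseteq O'$, a direct check shows that $\pi_X|_R\colon R\to P$ is a bijection with inverse $x\mapsto\bigl(x,(e'|_Q)^{-1}(h(x))\bigr)$; being continuous and open (the restriction of the open map $\pi_X$ to the open set $R$), it is a homeomorphism. Hence $(e\circ\pi_X)|_R=(e|_P)\circ(\pi_X|_R)\colon R\to O$ is a homeomorphism, and unwinding the definitions gives $(h'\circ\pi_Y)\circ((e\circ\pi_X)|_R)^{-1}=g\circ f$, so $(Z,e\circ\pi_X,h'\circ\pi_Y,R)$ is Nash data for $g\circ f$.

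For (2), the forward direction is immediate: if $(X,e,h,P)$ is Nash data for $(f_1,\dots,f_n)\colon O\to O^*_1\times\dots\times O^*_n$, then composing $h$ with the $i$-th coordinate projection yields Nash data $(X,e,\mathrm{pr}_i\circ h,P)$ for $f_i$. Conversely, given Nash data $(X_i,e_i,h_i,P_i)$ for each $f_i\colon O\to O^*_i$ (so $e_i\colon X_i\to V$ étale, $h_i\colon X_i\to V^*_i$), I would take the iterated fibre product $X:=X_1\times_V\dots\times_V X_n$, which is again a variety étale over $V$ via the canonical structure map $e$, set $P:=\bigcap_i\mathrm{pr}_i^{-1}(P_i)$ (étale-open, as each projection $\mathrm{pr}_i\colon X\to X_i$ is a morphism), and $h:=(h_1\circ\mathrm{pr}_1,\dots,h_n\circ\mathrm{pr}_n)$. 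As in (1), $e|_P\colon P\to O$ is a continuous open bijection and hence a homeomorphism, and $h\circ(e|_P)^{-1}=(f_1,\dots,f_n)$, so $(X,e,h,P)$ is Nash data for $(f_1,\dots,f_n)$.

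For (3), constant functions $O\to K$ are Nash (take $X=V$, $e=\id$, $P=O$, and $h$ a constant morphism), and the addition and multiplication morphisms $\A^1\times\A^1\to\A^1$ are Nash maps $K^2\to K$ (take $X=\A^2$, $e=\id$, $P=K^2$). Given Nash $f,g\colon O\to K$, part (2) shows $(f,g)\colon O\to K\times K=\A^2(K)$ is Nash, so $f+g$ and $f\cdot g$ are Nash by part (1), being the composites of $(f,g)$ with the addition and multiplication morphisms; likewise $cf$ is the composite of $f$ with the morphism $x\mapsto cx$. Thus the Nash functions $O\to K$ form a $K$-algebra under the pointwise operations. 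The only real content above is the homeomorphism assertion for the restricted projections in (1) and (2), which rests entirely on étale morphisms being open on $K$-points; the remaining ingredients --- that fibre products of varieties along étale maps are varieties étale over the base, and that preimages of étale-open sets are étale-open --- are routine, so I do not anticipate a serious obstacle.
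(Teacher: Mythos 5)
The paper does not actually prove Fact~\ref{fact:nash basic}; it simply cites it from \cite[Prop.~7.1, 7.2, 7.3]{large->henselian}. So there is no in-paper proof to compare against. That said, your argument is correct, and the fibre-product construction you use is the natural one for these statements, so it almost certainly coincides in spirit with the cited reference.

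A few small points worth noting. Your verification that $Z=X\times_W Y$ is a $K$-variety (reduced because étale over the reduced $X$, separated and of finite type by base change) is exactly the routine check one should do. The observation that $\pi_X|_R$ is a homeomorphism because it is a continuous open bijection, with openness of $\pi_X$ on $K$-points coming from the elementary fact that étale morphisms take basic étale-open sets to basic étale-open sets, is also right and avoids needing Fact~\ref{fact:etale-homeo}. In part~(2), your proof does not actually use the smoothness hypothesis on the varieties, which is consistent with the paper's own remark that the argument from the cited reference ``goes through in the more general case''; this matters because part~(3) places no smoothness hypothesis on $V$, so the deduction of (3) from (2) requires the unsmoothed version, which your argument delivers. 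One detail you leave implicit but should record: the claimed codomain $O_1^*\times\cdots\times O_n^*$ in part~(2) must be étale-open in $(V_1^*\times\cdots\times V_n^*)(K)$ for $(f_1,\dots,f_n)$ to qualify as a Nash map; this follows because a product of basic étale-open sets $e_i(W_i(K))$ is the image of $(W_1\times\cdots\times W_n)(K)$ under the étale morphism $e_1\times\cdots\times e_n$. With that gap filled, the proposal is complete.
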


Actually, Fact~\ref{fact:nash basic} is only proven for Nash maps on smooth varieties, but the argument goes through in the more general case.
Here (1) shows that Nash maps form a category and (2) shows that the product in this category is the set-theoretic product.
Let $\Sa N_p$ be the $K$-algebra of germs of Nash functions at $p$.

\begin{fact}\label{fact:inclusion}
Suppose that $K$ is large, $V$ is a smooth and irreducible, $p \in V(K)$, and $\Sa O_p$ is the local ring of $V$ at $p$.
Then $f \mapsto f|_{V(K)}$ gives a $K$-algebra embedding $\Sa O_p \to \Sa N_p$.
\end{fact}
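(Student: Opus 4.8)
The plan is to check three things: that $f \mapsto f|_{V(K)}$ is well-defined with values in $\Sa N_p$, that it is a $K$-algebra homomorphism, and that it is injective --- largeness of $K$ entering only in the last step.

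For well-definedness, represent a germ in $\Sa O_p$ by a quotient $g/h$ with $g,h \in \Oo(U)$ for some affine open $U \ni p$ and $h(p) \ne 0$; after shrinking $U$ to the basic open $\{h \ne 0\}$ (still affine, still containing $p$) we may assume $g/h$ is a regular function on $U$, that is, a morphism $\varphi \colon U \to \A^1$. Writing $\iota \colon U \hookrightarrow V$ for the open immersion, I claim $(U, \iota, \varphi, U(K))$ is Nash data for $\varphi|_{U(K)} \colon U(K) \to K$: indeed $\iota$ is \'etale, $\iota|_{U(K)}$ is a topological open embedding so it restricts to a homeomorphism of $U(K)$ onto the \'etale-open subset $U(K)$ of $V(K)$, and $\varphi|_{U(K)} = \varphi \circ (\iota|_{U(K)})^{-1}$. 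Hence $\varphi|_{U(K)}$ is a Nash function, and since two representatives of a single germ in $\Sa O_p$ agree on a common affine open containing $p$ --- whose $K$-points form an \'etale-open neighborhood of $p$ --- the resulting germ of $\varphi|_{U(K)}$ in $\Sa N_p$ is independent of the chosen representative.

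That the map is a $K$-algebra homomorphism is then immediate: the operations on $\Sa O_p$ are the pointwise operations on regular functions, by Fact~\ref{fact:nash basic}(3) the operations on $\Sa N_p$ are the pointwise operations on Nash functions, and restriction commutes with pointwise sum, product and scalar multiplication and fixes $1$. For injectivity, suppose $g/h \in \Sa O_p$ restricts to the zero germ, so $g/h$ vanishes on some nonempty \'etale-open $O \subseteq V(K)$; shrinking $O$, we may assume $O \subseteq U(K)$ for $U$ an affine open neighborhood of $p$ on which $g$ and $h$ are regular and $h$ is nowhere zero, and then $g$ vanishes on $O$. Since $U$ is an open subvariety of $V$ it is smooth and irreducible, and $O$ is a nonempty \'etale-open subset of $U(K)$, so Fact~\ref{fact:z-dense} gives that $O$ is Zariski dense in $U$; as $g \in \Oo(U)$ vanishes on a Zariski-dense set and $U$ is reduced, $g = 0$ in $\Oo(U)$, hence $g/h = 0$ in $\Sa O_p$.

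I expect no genuine obstacle: the first two steps are bookkeeping and the third is short once Fact~\ref{fact:z-dense} is available. The one point requiring care is the first step --- verifying that an open immersion supplies legitimate Nash data and, in particular, that $U(K)$ is genuinely an \'etale-open subset of $V(K)$, which is exactly item (3) among the recalled properties of the \'etale-open topology.
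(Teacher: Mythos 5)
Your proof is correct and takes the approach the paper itself indicates: the paper explicitly states that Fact~\ref{fact:inclusion} is ``a consequence of Fact~\ref{fact:z-dense},'' and your injectivity step is precisely that deduction, while the well-definedness step (packaging a regular function on an affine open $U$ as Nash data $(U,\iota,\varphi,U(K))$ via the open immersion $\iota$, which is \'etale) and the ring-homomorphism step are the routine bookkeeping one would expect. The one place worth tightening is the start of the injectivity argument: when you say $g/h$ ``vanishes on some nonempty \'etale-open $O\subseteq V(K)$,'' you should note that since it is the \emph{germ at $p$} that vanishes, $O$ may be taken to be an \'etale-open \emph{neighborhood of $p$}, which is what guarantees $O\cap U(K)$ is nonempty when you intersect with your chosen affine chart. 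With that small clarification, the argument is complete.
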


Fact~\ref{fact:inclusion}, a consequence of Fact~\ref{fact:z-dense}, is proven in \cite[Lemma~7.6]{large->henselian}.

\begin{fact}\label{fact:alg power series}
Let $V$ be a smooth $K$-variety and $p \in V(K)$.
Then $\Sa N_p$ is a local $K$-algebra with maximal ideal $\mfrak_p = \{f \in \Sa N_p : f(p) = 0\}$ and residue field $K$.
Furthermore $\Sa N_p$ is isomorphic as a $K$-algebra to the ring of algebraic power series over $K$ in $d$ variables for $d$ the dimension of the irreducible component of $V$ containing $p$.
\end{fact}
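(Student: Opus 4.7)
My plan is to prove the two assertions separately: first establish that $\Sa N_p$ is local with maximal ideal $\mfrak_p$ and residue field $K$ by a direct unit argument, and second identify $\Sa N_p$ with the algebraic power series ring by reducing to $V = \A^d$ via the étale-open local homeomorphism and comparing with completed local rings.

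For the local ring assertion, I take $f \in \Sa N_p$ with $f(p) \neq 0$ and any Nash data $(X,e,h,P)$ for $f$. Letting $q \in P$ be the unique preimage of $p$ under $e|_P$, one has $h(q) = f(p) \neq 0$, so $h$ is nonvanishing on a Zariski open neighborhood $W \subseteq X$ of $q$. Restricting the Nash data to $W$ and replacing $h$ by $1/h$ yields Nash data for $1/f$, so every element with nonzero value at $p$ is a unit. Hence $\mfrak_p$ is exactly the set of non-units, so $\Sa N_p$ is local, and the evaluation $f \mapsto f(p)$ identifies $\Sa N_p/\mfrak_p$ with $K$.

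For the power series description, smoothness of $V$ at $p$ yields $d$ local parameters extending to an étale morphism $\varepsilon \colon U \to \A^d$ on some Zariski open neighborhood $U$ of $p$ with $\varepsilon(p) = 0$. By Fact~\ref{fact:etale-homeo}, $\varepsilon$ is a local homeomorphism, and composition with $\varepsilon$ induces a $K$-algebra isomorphism $\Sa N_p \cong \Sa N_0$, so it suffices to prove $\Sa N_0 \cong K\langle x_1, \ldots, x_d\rangle$ for $V = \A^d$ and $p = 0$, where $K\langle x_1,\ldots,x_d\rangle \subseteq K[[x_1,\ldots,x_d]]$ denotes the subring of series algebraic over $K(x_1,\ldots,x_d)$. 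I would define $\Phi \colon \Sa N_0 \to K[[x_1,\ldots,x_d]]$ by: given Nash data $(X,e,h,P)$ for a germ $f$ and the unique $q \in P$ over $0$, étaleness of $e$ yields $\widehat{\Sa O}_{\A^d,0} \cong \widehat{\Sa O}_{X,q}$, and the image of $h$ under this identification defines $\Phi(f) \in K[[x_1,\ldots,x_d]]$. Algebraicity follows because $K(X)/K(x_1,\ldots,x_d)$ is finite, so $h$ satisfies some nonzero $Q(x,h) = 0$ in $K[x,y]$; well-definedness follows by passing to a common étale refinement of any two choices of Nash data.

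Surjectivity of $\Phi$ is the easier direction: given $\phi \in K\langle x_1,\ldots,x_d\rangle$ satisfying a nonzero $F(x,y) \in K[x,y]$ of minimal $y$-degree, minimality forces $F_y(0,\phi(0)) \neq 0$, and the étale map $\Spec K[x,y,F_y^{-1}]/(F) \to \A^d$ together with the projection to $y$ supplies Nash data whose image under $\Phi$ is $\phi$ (by the Henselian uniqueness of the lift). The main obstacle will be injectivity: if $\Phi(f) = 0$ then $h$ has zero image in $\widehat{\Sa O}_{X,q}$, so $h = 0$ in $\Sa O_{X,q}$ by faithful flatness of the completion, hence $h$ vanishes on a Zariski open neighborhood of $q$, and by Fact~\ref{fact:etale-homeo} $f$ vanishes on an étale-open neighborhood of $0$. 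This injectivity step genuinely combines scheme-theoretic faithful flatness with the étale-open local homeomorphism, and is where the assumption that $K$ is large and not separably closed enters crucially.
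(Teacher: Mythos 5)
The paper does not actually prove this statement; it is quoted verbatim from \cite[Thm.~7.9]{large->henselian}, so your proposal has to stand on its own. Your treatment of locality is correct (inverting $h$ on the Zariski-open locus where it does not vanish is exactly the right move), and the reduction to $V=\A^d$, $p=0$ via an \'etale chart together with Fact~\ref{fact:etale-homeo} is fine. The comparison map $\Phi\colon \Sa N_0\to K[[x_1,\ldots,x_d]]$ through $\widehat{\Sa O}_{X,q}\cong\widehat{\Sa O}_{\A^d,0}$ is also the right device, and your injectivity argument works. One remark there: injectivity as you argue it (Krull intersection plus vanishing on a Zariski neighborhood) does not use largeness at all; where largeness genuinely enters is in the \emph{well-definedness} of $\Phi$ (and in $\Phi$ being a ring map), which you dispatch with ``pass to a common \'etale refinement.'' On the fiber product $X_1\times_{\A^d}X_2$ the two pullbacks of $h_1,h_2$ agree only on an \'etale-open set, and you need Fact~\ref{fact:z-dense} to promote this to equality of regular functions. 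This is fixable but should be said.

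The genuine gap is in surjectivity. Your claim that a relation $F(x,y)$ of minimal $y$-degree for $\phi$ satisfies $F_y(0,\phi(0))\ne 0$ is false: take $d=1$, $\operatorname{char}K\ne 2$, and $\phi(x)=x(1+x)^{1/2}\in K[[x]]$. Its minimal polynomial is $F(x,y)=y^2-x^2-x^3$, so $F_y(0,\phi(0))=2\phi(0)=0$, and your chart $\Spec K[x,y,F_y^{-1}]/(F)$ misses the relevant point entirely. (In characteristic $p$ there is the further possibility that $F_y\equiv 0$, i.e.\ that the minimal polynomial is inseparable, which must be excluded by a separate argument.) In this example one rescues $\phi$ by writing it as $x\cdot(1+x)^{1/2}$ and using that $\Sa N_0$ is a ring, but no such ad hoc fix works uniformly: the statement that \emph{every} power series algebraic over $K(x_1,\ldots,x_d)$ lies in the image of $\Phi$ is precisely the nontrivial theorem that the henselization of $K[x_1,\ldots,x_d]_{(x)}$ coincides with the ring of algebraic power series, which requires passing to the integral closure of $K[x]_{(x)}$ in the separable closure of $K(x)$ generated by $\phi$ and locating an unramified localization there (or an appeal to Artin--Mazur/Artin approximation), not a one-line minimality argument. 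You should either carry out that argument or cite it; as written, surjectivity --- and hence the identification of $\Sa N_0$ with the full ring of algebraic power series rather than some subring of it --- is not established.
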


Finally, Fact~\ref{fact:alg power series} is \cite[Thm.~7.9]{large->henselian}.

\section{Existential closedness in $K(\!(t)\!)$}\label{section:(())}
The field $K$ is {\bf existentially closed} in a $K$-algebra $A$ if  any finite system of polynomial equations and inequations with coefficients from $K$ that has a solution in $A$ already has a solution in $K$.
It is an important fact that a large field $K$ is existentially closed in $K(\!(t)\!)$.
In particular, this is crucially used in the solution of the regular inverse Galois problem over large fields~\cite{pop-embedding}.
We explain how to show that large $K$ is existentially closed in $K(\!(t)\!)$ via Nash functions.
See \cite[Prop.~2.4]{Pop-little} for the standard proof.

\medskip
Fact~\ref{fact:ec} follows easily from the definition of existential closedness.

\begin{fact}\label{fact:ec}
If $D$ is a domain extending $K$ then $K$ is existentially closed in $D$ if and only if $K$ is existentially closed in $\Frac(D)$.
\end{fact}

We now prove an easy lemma on Nash functions.

\begin{lemma}\label{lem:nashlem}
Suppose that $K$ is large and not separably closed, $V$ is a smooth $K$-variety, $p \in V(K)$, and $f,g$ are distinct germs of Nash functions at $p$.
Then there is an \'etale-open neighborhood $O$ of $p$ such that $f,g$ are both defined on $O$ and $\{ a \in O : f(a)\ne g(a)\}$ contains a nonempty Zariski open subset of $O$.
\end{lemma}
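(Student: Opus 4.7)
The plan is to combine the algebraic dependence of Nash germs over germs of regular functions with the Zariski density of \'etale-open sets (Fact~\ref{fact:z-dense}) to confine the zero locus of $h := f-g$ to a proper Zariski closed subset of $V$. First, I reduce to the case that $V$ is irreducible by restricting to the irreducible component of $p$ (this component is open since $V$ is smooth). Note $h = f - g$ is a nonzero element of the domain $\Sa N_p$: it is nonzero because $f \ne g$ as germs, and $\Sa N_p$ is a domain because by Fact~\ref{fact:alg power series} it embeds in $K[[x_1,\ldots,x_d]]$.

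The main step is to produce a nontrivial polynomial relation
\[
c_m h^m + c_{m-1} h^{m-1} + \cdots + c_0 = 0
\]
in $\Sa N_p$ with all $c_i \in \Sa O_p$ and $c_0 \ne 0$. Fact~\ref{fact:alg power series} identifies $\Sa N_p$ with the ring $K\langle x_1,\ldots,x_d\rangle$ of algebraic power series, in which every element is by definition algebraic over $K[x_1,\ldots,x_d]$. Matching this identification with Fact~\ref{fact:inclusion} via a regular system of parameters of $\Sa O_p$ places $K[x_1,\ldots,x_d]$ inside the image of $\Sa O_p$, so $h$ satisfies an algebraic equation over $\Sa O_p$. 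Taking such a relation of minimal positive degree in $h$, if $c_0 = 0$ then factoring out $h$ and using that $\Sa N_p$ is a domain with $h \ne 0$ produces a lower-degree relation, contradicting minimality; so $c_0 \ne 0$.

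Next I extend the germs $c_0,\ldots,c_m$ to regular functions on a Zariski open neighborhood $U$ of $p$ in $V$, with $c_0$ not identically zero on $U$ (possible since its germ is nonzero and $V$ is irreducible at $p$). I then shrink $O$ so that $f$ and $g$ are both defined on it, so that $O \subseteq U(K)$ (which is itself \'etale-open in $V(K)$, as Zariski open immersions are \'etale), and so that the polynomial relation above holds pointwise on $O$ (achievable because it is an identity of germs in $\Sa N_p$).

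Set $U' := U \setminus \{c_0 = 0\}$, a nonempty Zariski open subset of $V$. By Fact~\ref{fact:z-dense}, the \'etale-open set $O$ is Zariski dense in $V$, so $O \cap U'$ is nonempty. For $a \in O \cap U'$, if $h(a) = 0$ then the polynomial relation at $a$ would force $c_0(a) = 0$, contradicting $a \in U'$. Hence $O \cap U' \subseteq \{a \in O : f(a) \ne g(a)\}$ is the required nonempty Zariski open subset of $O$. The main difficulty I anticipate is the second paragraph: aligning the two identifications of Facts~\ref{fact:inclusion} and~\ref{fact:alg power series} via a regular system of parameters to extract an algebraic relation whose coefficients truly lie in $\Sa O_p$ itself. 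After that, the argument reduces to Zariski density plus bookkeeping.
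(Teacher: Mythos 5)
Your argument is correct in outline but takes a genuinely different route from the paper. The paper works directly with Nash data $(X,e,h,P)$ for $f-g$: the vanishing locus $Y=\{h=0\}$ is a proper closed subvariety of the irreducible smooth $X$, and since $e$ is \'etale (hence quasi-finite with $\dim X=\dim V$), the Zariski closure $Z$ of $e(Y)$ has $\dim Z<\dim V$; then $O\cap(V\setminus Z)(K)$ is the desired set, nonempty by Fact~\ref{fact:z-dense}. You instead exploit algebraicity of Nash germs over $\Sa O_p$ to trap the zero set of $f-g$ inside $\{c_0=0\}$. Both proofs ultimately rest on Fact~\ref{fact:z-dense}; the paper's is shorter because the ``proper closed subset containing the zero locus'' is read off geometrically rather than extracted from a minimal polynomial. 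The one step of yours that needs more care is exactly the one you flag: Fact~\ref{fact:alg power series} only asserts an abstract $K$-algebra isomorphism of $\Sa N_p$ with the algebraic power series ring, and does not by itself say that this isomorphism carries the image of $\Sa O_p$ under Fact~\ref{fact:inclusion} onto (something containing) $K[x_1,\dots,x_d]$; you need either to invoke the actual construction behind that fact (the identification is built from \'etale coordinates at $p$, so it is compatible), or to rederive algebraicity directly from Nash data: with $(X,e,h,P)$ and $X$ irreducible, $e$ is dominant \'etale, so $K(X)/K(V)$ is finite and $h$ is algebraic over $K(V)$, and clearing denominators gives your relation with $c_i\in\Sa O_p$ after shrinking. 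With that patch (and your minimal-degree argument for $c_0\ne 0$, which is fine since $\Sa N_p$ is a domain), the proof goes through.
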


In particular if $\dim V = 1$ then we have $f(a) \ne g(a)$ for all  $a \in O \setminus \{p\}$ for $O$ a sufficiently small \'etale-open neighborhood of $p$.

\begin{proof}
After replacing $f$ with $f - g$ we suppose  $g$ is zero and $f$ is not.
Let $O$ be an \'etale-open neighborhood of $p$ and $f^*\colon O \to K$ be a Nash function whose germ at $p$ is $f$.
Let $(X,e,h,P)$ be Nash data for $f^*$.
Then $X$ is smooth as $V$ is smooth and $X \to V$ is \'etale.
After possibly shrinking $O$ we may replace $X$ with an irreducible component of $X$ and therefore suppose that $X$ is irreducible.
Let $Y$ be the subvariety of $X$ given by $h = 0$.
Then $h$ is not constant zero hence $\dim Y < \dim X$ as $X$ is irreducible.
Let $Z$ be the Zariski closure of $e(Y)$ in $V$.
Then $\dim Z = \dim e(Y) \le \dim Y < \dim X$.
Furthermore $\dim X= \dim V$ as $e$ is \'etale.
Now $\{ a \in O : f(a)\ne 0\}$ contains $O \cap (V\setminus Z)(K)$, and $O\cap (V\setminus Z)(K)\ne\varnothing$ by Fact~\ref{fact:z-dense}.
\end{proof}

Now suppose that $K$ is large and not separably closed.
We show that $K$ is existentially closed in $K(\!(t)\!)$.
By Fact~\ref{fact:ec} it is enough to show that $K$ is existentially closed in $K[[t]]$.
Let $K[[t]]_\mathrm{alg}$ be the ring of algebraic power series over $K$ in one variable.
By the Artin approximation theorem $K[[t]]_\mathrm{alg}$ is existentially closed in $K[[t]]$~\cite{Artin_approx}.
Hence it is enough to show that $K$ is existentially closed in $K[[t]]_\mathrm{alg}$.
Fix $f_1,\ldots,f_n,g_1,\ldots,g_m \in K[x_1,\ldots,x_d]$ and consider the system $\mathcal{P}$ of equations and inequations given by
$$ f_i(x_1,\ldots,x_d) = 0 \ne g_j(x_1,\ldots,x_d) \quad \text{for all } 1 = 1,\ldots,n \text{ and } j = 1,\ldots,m.$$
Let  $(h_1,\ldots,h_d) \in K[[t]]_\mathrm{alg}^d$ be a solution of $\mathcal{P}$.
We identify $K[[t]]_\mathrm{alg}$ with the ring of germs of Nash functions on $K$ at the origin and therefore consider each $h_i$ to be a Nash function germ.
Each $g_j(h_1,\ldots,h_d)$ is also a Nash function germ.
By Lemma~\ref{lem:nashlem} there is an \'etale-open neighborhood $U\subseteq K$ of the origin so that each $h_i$ is defined on $U$ and each $g_j(h_1,\ldots,h_d)$ does not vanish on $U \setminus \{0\}$.
Hence $(h_1(a),\ldots,h_d(a))$ is a solution of $\mathcal{P}$ in $K^d$ for any $a \in U \setminus \{0\}$.

\medskip
The argument above only covers large fields that are not separably closed.
The separably closed case holds simply as separably closed fields are $\pac$, a $\pac$ field is existentially closed in any regular field extension, and $K(\!(t)\!)$ is a regular field extension of $K$.
Alternatively, if $K$ is separably closed and not the algebraic closure of a finite field, then we can take a non-trivial valuation $v$ on $K$, note that $v$ is henselian, develop a theory of Nash maps over $K$ with respect to $v$, and adapt the argument above.



\section{Nash maps}
{\bf We suppose throughout this section that $K$ is large and not separably closed.}

\subsection{The Nash Jacobian}
\label{section:nash-derivative}
If $f$ is a germ of Nash function on $K$ at the origin, then $f$ is identified with an algebraic power series, and the linear part of the power series should be the Jacobian of $f$ at the origin.
We give a commutative-algebraic definition of the Jacobian in general.
We define it as a functor so that the chain rule automatically holds.

\medskip
Given smooth $V,V^*$ and \'etale-open $O\subseteq V(K), O^*\subseteq V^*(K)$ we wish to define the Jacobian of a Nash map $O \to O^*$ at a point $p \in O$, this will be a linear map $T_p V \to T_{f(p)} V^*$.
Let $\mathtt{LNash}$ be the category of germs of Nash maps on smooth varieties.
The objects of $\mathtt{LNash}$ are pairs $(V(K),p)$ for a smooth $V$ and $p \in V(K)$, and an arrow $(V(K),p) \to (V^*(K),p^*)$ is a germ of a Nash map $f \colon O \to V^*(K)$ such that $f(p) = p^*$ where $O \subseteq V(K)$ is an open neighborhood of $p$.
Let $\mathtt{Vec}_K$ be the category of finite dimensional $K$-vector spaces.

\meno
Let $\mathtt{Reg}$ be the category of  regular local $K$-algebras with residue field $K$ and local ring morphisms.
Fix $A \in \mathtt{Reg}$ with maximal ideal $\mfrak$ and let $\mfrak^2 = \{ ab : a,b \in \mfrak \}$.
The cotangent space of $A$ is $\mfrak/\mfrak^2$, considered as a $K$-vector space, and the tangent space of $A$ is the dual vector space $\Hom_{\mathtt{Vec}_K}(\mfrak/\mfrak^2,K)$.
The tangent space of $A$ is a $d$-dimensional $K$-vector space, where $d$ is the Krull dimension of $A$.
This gives a contravariant tangent space functor $\mathtt{Reg} \to \mathtt{Vec}_K$.
Let $\Sa N_p$ be the ring of germs of Nash functions at $p \in V(K)$.
By Fact~\ref{fact:alg power series} $\Sa N_p$ is a $d$-dimensional regular local ring for $d$ the dimension of the irreducible component of $V$ containing $p$.
The inclusion $\Sa O_p \to \Sa N_p$ given by Fact~\ref{fact:inclusion} therefore gives a $K$-vector space isomorphism from $T_p V$ to $\Hom_{\mathtt{Vec}_K}(\mfrak_p/\mfrak^2_p, K)$.
Furthermore any Nash germ $f\colon (V(K),p) \to (V^*(K),p^*)$ gives a local $K$-algebra morphism $\Sa N_{p^*} \to \Sa N_p$ by $h \mapsto h \circ f$.
This gives a contravariant functor from $\mathtt{LNash}$ to $\mathtt{Reg}$.
The local Nash Jacobian is the functor $\mathtt{LNash} \to \mathtt{Vec}_K$ given by composing the contravariant functors $\mathtt{LNash} \to \mathtt{Reg}$ and $\mathtt{Reg} \to \mathtt{Vec}_K$.

\meno
Fix smooth $K$-varieties $V,V^*$ and \'etale-open subsets $O, O^*$ of $V(K), V^*(K)$, respectively.
Given a Nash map $f \colon O \to O^*$ and $p \in O$, we let $\jac_f(p)$ be the morphism $T_p V \to T_{f(p)} V^*$ associated to the germ of $f$ at $p$ via the functor $\mathtt{LNash} \to \mathtt{Vec}_K$ described above.
Fix Nash data $(X,e,h,P)$ for $f$.
Let $q$ be the unique element of $P$ such that $e(q) = p$.
The Nash germ $(X(K),q) \to (V(K),p)$ of $e$ is invertible.
By functorality $\jac_e(q)$ is an invertible linear map $T_q X \to T_p V$.
Functorality (i.e. the chain rule) also shows that
$$\jac_f(p) = \jac_h(q) \circ \jac_e(q)^{-1}.$$
Let $M_{n \times m}(K)$ be the set of $n \times m$ matrices with entries in $K$.
When $V = \A^m$ and $V^* = \A^n$ we identify $T_p V$, $T_p W$ with $K^m, K^n$, respectively, and therefore consider $\jac_f(p)$ to be an element of $M_{n \times m}(K)$.
We define $\der f_i / \der x_j (p)$ to be the $ij$-entry of $\jac_f(p)$.

\begin{proposition}
\label{prop:nash-diff}
Suppose that $f\colon O \to K^n$ is Nash for $O$ a nonempty open subset of $K^m$.
Then $\jac_f$ is a Nash map $O \to M_{n\times m}(K)$ and $\der f_i / \der x_j$ is a Nash function $O \to K$ for each $i = 1,\ldots,n$ and $j = 1,\ldots,m$.
\end{proposition}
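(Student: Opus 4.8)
The plan is to reduce everything to a local computation with algebraic power series, using the Artin–Mazur-style Nash data for $f$ and the fact that algebraic power series are differentiable in a way compatible with the Nash Jacobian.

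First I would fix Nash data $(X,e,h,P)$ for $f\colon O\to K^n$, with $e\colon X\to\A^m$ étale and $h\colon X\to\A^n$ a morphism, so that $e$ restricts to a homeomorphism $P\to O$ and $f=h\circ(e|_P)^{-1}$. The formula $\jac_f(p)=\jac_h(q)\circ\jac_e(q)^{-1}$ established just above, where $q=(e|_P)^{-1}(p)$, is the starting point. Since $e$ and $h$ are morphisms of varieties, their Jacobians are given by honest polynomial (or regular-function) expressions in coordinates on $X$: concretely, after covering $X$ by affine opens and choosing étale coordinates, $\jac_e$ and $\jac_h$ are matrices of regular functions on $X(K)$, hence are Nash maps $X(K)\to M_{m\times m}(K)$ and $X(K)\to M_{n\times m}(K)$ by Fact~\ref{fact:inclusion} (regular functions restrict to Nash functions). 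Then $\jac_f = (\jac_h\circ(e|_P)^{-1})\cdot(\jac_e\circ(e|_P)^{-1})^{-1}$ as a function on $O$.

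Next I would assemble the pieces using Fact~\ref{fact:nash basic}. The map $(e|_P)^{-1}\colon O\to P\subseteq X(K)$ is Nash (it has Nash data $(X,e,\mathrm{id}_X,P)$), so $\jac_h\circ(e|_P)^{-1}$ and $\jac_e\circ(e|_P)^{-1}$ are Nash by closure under composition, Fact~\ref{fact:nash basic}(1), viewing $M_{n\times m}(K)$ and $M_{m\times m}(K)$ as affine spaces and using Fact~\ref{fact:nash basic}(2) to treat the matrix entrywise. The matrix $\jac_e\circ(e|_P)^{-1}$ is invertible at every point of $O$ because $e$ is étale; its inverse is obtained by Cramer's rule, i.e. by dividing the adjugate (a polynomial in the entries, hence Nash by Fact~\ref{fact:nash basic}(3)) by the determinant, which is a nowhere-vanishing Nash function on $O$. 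Here I need that the reciprocal of a nowhere-vanishing Nash function is Nash: this follows since $\Sa N_p$ is a local ring with maximal ideal $\{f : f(p)=0\}$ (Fact~\ref{fact:alg power series}), so a germ not in $\mfrak_p$ is a unit, and one patches the local inverses together. Multiplying the two Nash matrix-valued maps (matrix multiplication is polynomial in the entries, so Nash by Fact~\ref{fact:nash basic}(2),(3)) yields that $\jac_f\colon O\to M_{n\times m}(K)$ is Nash, and then each entry $\der f_i/\der x_j$ is Nash by Fact~\ref{fact:nash basic}(2).

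**The main obstacle** I anticipate is purely bookkeeping: verifying that $\jac_e$ and $\jac_h$, defined via the functorial/commutative-algebraic Nash Jacobian, actually coincide on $X(K)$ with the classical Jacobian matrices of the scheme morphisms $e$ and $h$ computed in local étale coordinates — in other words, that the abstract functor agrees with the naive coordinate derivative for genuine morphisms. This is where one must unwind the definition of the tangent space functor $\mathtt{Reg}\to\mathtt{Vec}_K$ through $\mfrak_q/\mfrak_q^2$ and check compatibility with the inclusion $\Sa O_q\to\Sa N_q$; once that identification is in hand, the rest is a routine application of Fact~\ref{fact:nash basic}. A secondary point requiring a line of justification is that $X$ may be reducible, but after shrinking $O$ we may replace $X$ by the component containing $q$, exactly as in the proof of Lemma~\ref{lem:nashlem}.
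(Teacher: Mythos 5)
Your overall strategy --- start from $\jac_f(p)=\jac_h(q)\circ\jac_e(q)^{-1}$ and show each factor is a Nash matrix-valued map --- is the right starting point, but there is a gap where you declare $\jac_e$ and $\jac_h$ to be ``matrices of regular functions on $X(K)$'' after covering $X$ by affine charts. The matrix of $\jac_e(q)$ (and of $\jac_h(q)$) depends on a choice of basis of $T_qX$, and a basis chosen chart-by-chart does not yield a well-defined matrix-valued function on $X(K)$: on overlaps the matrices differ by the transition Jacobians. So as written $\jac_e$ and $\jac_h$ are only defined locally, and your argument at best shows that $\jac_f$ is Nash on each member of an open cover of $O$. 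Since the paper's definition of a Nash map demands a single global Nash datum $(X,e,h,P)$ with $e|_P$ a homeomorphism onto all of $O$, you would then need a patching principle (``locally Nash implies Nash'') that is established nowhere in the paper and that you also invoke, without proof, when you ``patch the local inverses together'' for the reciprocal of a nowhere-vanishing Nash function. That particular lemma is easily repaired globally --- if $g$ has Nash data $(X,e,h,P)$ and never vanishes on $O$, then $P$ lies in the open subvariety $X'=\{h\ne 0\}$ and $(X',e|_{X'},1/h,P)$ is Nash data for $1/g$ --- but the chart-dependence of $\jac_e$ and $\jac_h$ is the more serious issue.

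The paper closes exactly this gap with one observation that also eliminates your matrix inversion entirely: pull back the standard vector fields $\der x_1,\ldots,\der x_m$ on $\A^m$ through the \'etale map $e$ to obtain a global frame $\der \widehat{x}_1,\ldots,\der\widehat{x}_m$ on $X$. In these coordinates $\jac_e(q)$ is the identity for every $q$, so $\jac_f(p)=\jac_h(q)$, and $\jac_h$ is literally the restriction to $X(K)$ of a morphism $j\colon X\to\A^{nm}$; hence $(X,e,j,P)$ is global Nash data for $\jac_f$, and one never needs Cramer's rule, the algebra structure from Fact~\ref{fact:nash basic}, or the reciprocal lemma. The ``main obstacle'' you single out --- checking that the functorial Jacobian agrees with the naive coordinate derivative of a morphism --- is real but is shared by the paper's proof and is the routine part; the missing idea is the global frame.
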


We can iterate to define the higher partial derivatives $\der^{d_1 + \cdots + d_m} f / \der^{d_1} x_1 \ldots \der^{d_m} x_m$ and show that each partial derivative is a  Nash map $O \to K$.
We leave this to the reader.

\begin{proof}
The first and second claims are equivalent by Fact~\ref{fact:nash basic}(2).
We prove the first claim.
Let $(X,e,h,P)$ be Nash data for $f$.
For each $j \in \{1,\ldots,m\}$ we let $\der x_j$ be the usual algebraic vector field on $\A^m$.
Then each $\der x_j$ pulls back to an algebraic vector field on $X$ via $e$ which we denote by $\der \widehat{x}_j$.
As $e$ is \'etale $\der \widehat{x}_1 (p) , \ldots , \der \widehat{x}_m (p)$ is a basis for $T_p X$ at any $p \in X(K)$.
This gives coordinates on $T_p X$ for $p \in X(K)$.
We work with these coordinates on $T_p X$.
This allows us to consider all Jacobians to be matrices.
Let $q$ be the unique element of $P$ satisfying $e(q) = p$.
In these coordinates, $\jac_e (q)$ is the identity, hence $\jac_f(p) = \jac_h(q)$.
Therefore $\jac_h(q)$ is the $n \times m$ matrix with $ij$-entry $\der h_i / \der \widehat{x}_j (q)$.
Hence there is a morphism $j \colon X \to \A^{nm}$ such that $\jac_h$ is the restriction of $j$ to $X(K)$.
Therefore $(X,e,j,P)$ is also Nash data for $\jac_f$.
Hence $\jac_f$ is a Nash map.
\end{proof}

\subsection{Inverse and Implicit function theorems for Nash maps}\label{section:inv-imp}
Recall that if $V \to W$ is \'etale at $p \in V$ then $V \to W$ is \'etale on an open subvariety of $V$ containing $p$.
Hence Corollary~\ref{cor:etale-homeo} follows from Fact~\ref{fact:etale-homeo}.

\begin{corollary}\label{cor:etale-homeo}
If $f\colon V \to W$ is \'etale at $p \in V(K)$ then there are \'etale-open neighborhoods $p \in O \subseteq V(K)$ and $f(p) \in P \subseteq W(K)$ such that $f$ gives a homeomorphism $O \to P$.
\end{corollary}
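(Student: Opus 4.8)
The statement is an immediate corollary of Fact~\ref{fact:etale-homeo} together with the standard algebraic-geometry fact that \'etaleness is an open condition; I would expect the ``proof'' to be little more than a sentence, but let me lay out the reasoning.

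First I would recall that if $f \colon V \to W$ is \'etale at $p$, then by definition there is a Zariski-open subvariety $U \subseteq V$ with $p \in U$ such that the restriction $f|_U \colon U \to W$ is \'etale (this is the sentence just before the statement in the excerpt, so it may be invoked freely). Since $U$ is a $K$-variety and $U \to W$ is an \'etale morphism of $K$-varieties, Fact~\ref{fact:etale-homeo} applies directly to it: because $K$ is large and not separably closed, $U(K) \to W(K)$ is a local homeomorphism in the \'etale-open topology. Being a local homeomorphism at $p$ means exactly that there is an \'etale-open neighborhood $O'$ of $p$ in $U(K)$ which $f$ maps homeomorphically onto an \'etale-open subset of $W(K)$.

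Next I would need to promote $O'$ from being \'etale-open in $U(K)$ to being \'etale-open in $V(K)$. This is where the third bullet point of the background recollection enters: the inclusion $U \hookrightarrow V$ is a scheme-theoretic open immersion, so $U(K) \to V(K)$ is a topological open embedding for the \'etale-open topologies. Hence $O'$, being \'etale-open in $U(K)$, is also \'etale-open in $V(K)$; set $O := O'$ and $P := f(O)$. Then $p \in O \subseteq V(K)$, $f(p) \in P \subseteq W(K)$, $P$ is \'etale-open in $W(K)$, and $f$ restricts to a homeomorphism $O \to P$, which is precisely the assertion.

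I do not anticipate any real obstacle here: every ingredient (openness of the \'etale locus, Fact~\ref{fact:etale-homeo}, the open-immersion compatibility of the \'etale-open topology) is already available, and the argument is purely a matter of assembling them. The only point requiring a moment's care is the passage between \'etale-open subsets of $U(K)$ and of $V(K)$, and that is handled cleanly by the open-embedding property recalled in the background subsection. Accordingly I would write the proof as a single short paragraph chaining these three facts.
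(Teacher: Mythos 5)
Your argument is exactly the paper's: note that \'etaleness at $p$ gives \'etaleness on an open subvariety through $p$, apply Fact~\ref{fact:etale-homeo}, and transfer the neighborhood back to $V(K)$ via the open-immersion compatibility. The paper states this as a one-line remark rather than a displayed proof, but the content is identical, with your writeup merely filling in the (routine) bookkeeping about \'etale-open subsets of $U(K)$ versus $V(K)$.
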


A morphism $f\colon V \to W$ between smooth varieties is smooth at $p \in V$ if the induced map $T_p V \to T_{f(p)} W$ is surjective, and $f$ is {\bf smooth} if it is smooth at every $p \in V$.
It follows that $f$ is \'etale at $p$ if and only if $f$ is smooth at $p$ and the local dimension of $V$ at $p$ is equal to the local dimension of $W$ at $f(p)$, and $f$ is \'etale if and only if $f$ is smooth and $\dim V = \dim W$.
If $O, O^*$ is an \'etale-open subset of $V(K), V^*(K)$, respectively, with $V, V^*$ smooth then we say that a Nash map $f \colon O \to O^*$ is a {\bf submersion} at $p \in O$ if $\jac_p(f)$ is surjective and $f$ is a submersion when it is a submersion at every $p \in O$.

\begin{lemma}\label{lem:nash sub}
Let $V,V^*$ be smooth and, $O, O^*$ be a nonempty \'etale-open subset of $V(K), V^*(K)$, respectively.
Let $f\colon O \to O^*$ be a Nash map and fix $p \in O$.
Then the following are equivalent:
\begin{enumerate}[leftmargin=*]
\item $f$ is a submersion at $p$.
\item There is Nash data $(X, e, h, P)$ for $f$ such that $h$ is smooth at $q = (e|_P)^{-1}(p)$.
\item If $(X, e, h, P)$ is Nash data for $f$ then $h$ is smooth at $q = (e|_P)^{-1}(p)$.
\end{enumerate}
\end{lemma}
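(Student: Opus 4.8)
The plan is to use the chain-rule formula $\jac_f(p) = \jac_h(q) \circ \jac_e(q)^{-1}$ established just before Proposition~\ref{prop:nash-diff}, together with the fact that $\jac_e(q)$ is always an isomorphism because the Nash germ of $e$ at $q$ is invertible. From this formula it is immediate that $\jac_f(p)$ is surjective if and only if $\jac_h(q)$ is surjective, i.e. if and only if $h$ is smooth at $q$. So the content is really just that the formula does not depend on the choice of Nash data, which is exactly what functoriality of the Nash Jacobian gives us.

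Concretely, I would argue in a cycle $(1) \Rightarrow (3) \Rightarrow (2) \Rightarrow (1)$, or more economically observe that $(3) \Rightarrow (2)$ is trivial (Nash data always exists, by the very definition of a Nash map, and $(X,e,h,P)$ can always be chosen so that $e|_P$ is a homeomorphism onto $O$, hence $q = (e|_P)^{-1}(p)$ is well defined), and $(2) \Rightarrow (1)$ and $(1) \Rightarrow (3)$ both follow from the same computation. For $(2) \Rightarrow (1)$: fix the Nash data $(X,e,h,P)$ witnessing $(2)$; by functoriality $\jac_e(q) \colon T_q X \to T_p V$ is invertible, so $\jac_f(p) = \jac_h(q) \circ \jac_e(q)^{-1}$ is surjective since $\jac_h(q) = T_q X \to T_{f(p)} V^*$ is surjective by smoothness of $h$ at $q$. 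For $(1) \Rightarrow (3)$: let $(X,e,h,P)$ be arbitrary Nash data for $f$; again $\jac_e(q)$ is invertible, so from $\jac_f(p) = \jac_h(q) \circ \jac_e(q)^{-1}$ and surjectivity of $\jac_f(p)$ we get that $\jac_h(q)$ is surjective, which says precisely that $h \colon X \to V^*$ is smooth at $q$ (here $X$ is smooth, being \'etale over the smooth $V$, and $V^*$ is smooth by hypothesis, so the criterion "smooth $\iff$ surjective on tangent spaces" applies).

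One small point to be careful about: in the definition of $\jac_h(q)$ as a linear map $T_q X \to T_{f(p)} V^*$ we are using the functor $\mathtt{LNash} \to \mathtt{Vec}_K$ applied to the Nash germ of $h$, but $h$ is a genuine morphism of varieties, so its Nash Jacobian at $q$ agrees with the usual (algebraic geometry) differential $d_q h \colon T_q X \to T_{h(q)} V^*$; this is implicit in the construction of the functor via Fact~\ref{fact:inclusion} and I would state it as a one-line remark rather than prove it. Similarly one should note $h(q) = f(p)$, which is immediate from $f = h \circ (e|_P)^{-1}$.

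I do not expect a serious obstacle here: the lemma is essentially a formal consequence of the functoriality (chain rule) already set up in Section~\ref{section:nash-derivative}. The only thing requiring a moment's thought is the compatibility of the abstract Nash Jacobian of the morphism $h$ with its ordinary differential — i.e. that "$h$ smooth at $q$" in the scheme sense coincides with "$\jac_h(q)$ surjective" in the sense of the functor — and this follows directly from Fact~\ref{fact:inclusion} identifying $T_q X$ with $\Hom_{\mathtt{Vec}_K}(\mfrak_q/\mfrak_q^2, K)$ compatibly with pullback of functions.
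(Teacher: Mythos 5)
Your proof is correct and follows exactly the same route as the paper: the chain-rule identity $\jac_f(p) = \jac_h(q)\circ\jac_e(q)^{-1}$ together with invertibility of $\jac_e(q)$ reduces everything to the equivalence ``$\jac_f(p)$ surjective iff $\jac_h(q)$ surjective.'' The extra remark you make about compatibility of the Nash Jacobian of the morphism $h$ with the usual differential is a reasonable sanity check, though the paper treats it as implicit in the construction of the functor.
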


\begin{proof}
Clearly (3) implies (2).
Let $(X, e, h, P)$ be Nash data for $f$ and let $q = (e|_{P})^{-1}(p)$.
Then $\jac_p(f) = \jac_h(q) \circ \jac_e(q)^{-1}$.
Now $\jac_e(q)^{-1}$ is an isomorphism, so $\jac_p(f)$ is surjective if and only if $\jac_h(q)$ is surjective.
Hence (1) and (3) are equivalent and (2) implies (1).
\end{proof}

A \textbf{Nash isomorphism} is a Nash map with a Nash inverse.
Note that a Nash isomorphism is a homeomorphism.
We now give the inverse function theorem for Nash maps.

\begin{proposition}\label{prop:nash-inverse}
Let $f$ and $p$ be as in Lemma~\ref{lem:nash sub}.
Then the following are equivalent:
\begin{enumerate}[leftmargin=*]
\item $f$ is a Nash isomorphism at $p$.
\item $\jac_f(p)$ is invertible.
\item There is Nash data $(X, e, h, P)$ for $f$ such that $h$ is \'etale at $q = (e|_P)^{-1}(p)$.
\item If $(X, e, h, P)$ is Nash data for $f$ then $h$ is \'etale at $q = (e|_P)^{-1}(p)$.
\end{enumerate}
\end{proposition}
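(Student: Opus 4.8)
The plan is to prove the cycle of implications $(2)\Rightarrow(4)\Rightarrow(3)\Rightarrow(1)\Rightarrow(2)$, leveraging the analogous Lemma~\ref{lem:nash sub} for submersions and the chain rule for the Nash Jacobian established in Section~\ref{section:nash-derivative}. The implications $(4)\Rightarrow(3)$ and $(3)\Rightarrow(1)$ are the substantive ones; $(2)\Leftrightarrow(3)$ and $(2)\Leftrightarrow(4)$ are essentially the same bookkeeping as in Lemma~\ref{lem:nash sub}.

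First I would dispose of the equivalence of (2), (3), (4) with the argument already used for Lemma~\ref{lem:nash sub}: for any Nash data $(X,e,h,P)$ with $q = (e|_P)^{-1}(p)$ we have $\jac_f(p) = \jac_h(q)\circ\jac_e(q)^{-1}$ with $\jac_e(q)$ invertible, so $\jac_f(p)$ is invertible iff $\jac_h(q)$ is invertible; and since $e$ is \'etale, $\dim_q X = \dim_p V$ and $\dim_{h(q)} V^* = \dim_{f(p)} V^*$, so $\jac_h(q)$ invertible is equivalent to $h$ being \'etale at $q$ (using that a morphism between smooth varieties is \'etale at a point iff its Jacobian there is a linear isomorphism). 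This gives $(2)\Leftrightarrow(3)\Leftrightarrow(4)$ at once, and $(4)\Rightarrow(3)$ is then trivial since Nash data always exists.

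Next, for $(3)\Rightarrow(1)$: choose Nash data $(X,e,h,P)$ with $h$ \'etale at $q$. By replacing $X$ with an open subvariety containing $q$ on which $h$ is \'etale (using that \'etaleness at a point is an open condition, as recalled before Corollary~\ref{cor:etale-homeo}), and shrinking $O$, $P$ correspondingly, I may assume $h\colon X\to V^*$ is \'etale. By Corollary~\ref{cor:etale-homeo} applied to $h$, after further shrinking there are \'etale-open neighborhoods $q\in Q\subseteq X(K)$ and $f(p)=h(q)\in R\subseteq V^*(K)$ with $h$ restricting to a homeomorphism $Q\to R$; intersecting with $P$ and pushing through the homeomorphism $e|_P$, I arrange $Q\subseteq P$, $e(Q) = O'$ an \'etale-open neighborhood of $p$ in $O$, and $h(Q)=O^{*\prime}$ an \'etale-open neighborhood of $f(p)$ in $O^*$, with $f|_{O'} = h\circ(e|_Q)^{-1}$ a homeomorphism $O'\to O^{*\prime}$. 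Its inverse is $e\circ(h|_Q)^{-1}$, which is Nash with Nash data $(X, h|_Q{\text{-type data}})$ — more precisely, $(X', h, e, Q)$ after suitably shrinking $X$ to $X'$ — so $f$ is a Nash isomorphism at $p$. The main obstacle here is purely organizational: chasing the several shrinkings so that the domains/codomains line up and verifying the swapped tuple $(X,h,e,Q)$ is legitimate Nash data for $f^{-1}$; nothing deep is needed since \'etaleness of $h$ makes $e$ and $h$ symmetric in the data.

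Finally $(1)\Rightarrow(2)$: if $g\colon O^{*\prime}\to O'$ is a Nash inverse to $f$ near $p$, then $g\circ f = \id$ on a neighborhood of $p$ and $f\circ g = \id$ on a neighborhood of $f(p)$, so by functoriality of the Nash Jacobian (the chain rule, Section~\ref{section:nash-derivative}) $\jac_g(f(p))\circ\jac_f(p) = \id_{T_pV}$ and $\jac_f(p)\circ\jac_g(f(p)) = \id_{T_{f(p)}V^*}$, whence $\jac_f(p)$ is invertible. This closes the cycle. I expect the verification that $(X,h,e,Q)$ is valid Nash data for $f^{-1}$ in the step $(3)\Rightarrow(1)$ to be the one place demanding a little care, but it reduces to the fact that $h$, being \'etale, plays exactly the structural role that the \'etale map in Nash data is required to play.
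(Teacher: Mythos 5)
Your proposal is correct and follows essentially the same route as the paper: establish the equivalence of (2), (3), (4) by the Jacobian bookkeeping from Lemma~\ref{lem:nash sub}, prove (3)$\Rightarrow$(1) (the paper does (4)$\Rightarrow$(1), which is the same given (3)$\Leftrightarrow$(4)) by shrinking so that $h$ is \'etale and then swapping the roles of $e$ and $h$ to exhibit Nash data $(X,h,e,Q)$ for the local inverse, and prove (1)$\Rightarrow$(2) via functoriality of $\jac$. The paper treats the domain/codomain shrinking more tersely, but the underlying argument is identical.
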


\begin{proof}
The equivalence of (2), (3), and (4) follows by a slight variation of the proof of Lemma~\ref{lem:nash sub}.
We show that (4) implies (1).
Suppose $(X, e, h,P)$ is Nash data for $f$ and $h$ is \'etale at $q = (e|_P)^{-1}(p)$.
Then $h$ gives a homeomorphism $P^* \to U$ for some \'etale-open neighborhoods $q \in P^* \subseteq P$ and $f(p) \in U \subseteq O_2$.
Hence $(X, h, e, P^*)$ is Nash data for a Nash map, and this Nash map is a local inverse for $f$ at $p$.
Therefore (1) holds.
We finally show that (1) implies (2).
Suppose $f$ is a Nash isomorphism at $p$, let $g$ be the local inverse of $f$ at $p$.
Then $g$ is a Nash map, and $\jac_f(p) \circ \jac_g(f(p))$ is the identity.
Hence $\jac_f(p)$ is invertible.
\end{proof}




Let $f(x_1,\ldots,x_n,y_1,\ldots,y_n)$ be a Nash map $O \to K^{n}$ for $O$ an open subset of $K^{m + n}$.
We let $\jac_{f,y}(a,b)$ be the $n\times n$ Jacobian matrix with $ij$-entry $\der f_i/\der y_j(a,b)$.

\begin{proposition}
\label{prop:nash-implicit}
Suppose that $O$ is a nonempty open subset of $K^{m + n}$, $f \colon O \to K^n$ is Nash, $(a,b) \in K^{m + n}$ satisfies $f(a,b) = 0$, and $\jac_{f,y}(a,b)$ is invertible.
Then there are open neighborhoods $a \in P \subseteq K^m$, $b \in S \subseteq K^n$ and a Nash map $s \colon P \to S$ such that $s(a) = b$ and $f(a^*,s(a^*)) = 0$ for all $a^* \in P$.
\end{proposition}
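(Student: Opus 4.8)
The plan is to deduce the implicit function theorem from the Nash inverse function theorem (Proposition~\ref{prop:nash-inverse}) by the standard trick of augmenting $f$ with the identity in the $x$-variables. Define $F \colon O \to K^{m+n}$ by $F(x,y) = (x, f(x,y))$. This is a Nash map by Fact~\ref{fact:nash basic}(2), since each component is Nash (the first $m$ components are coordinate projections, which are restrictions of morphisms, and the last $n$ are the $f_i$). We have $F(a,b) = (a,0)$.

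Next I would compute $\jac_F(a,b)$. Writing the Jacobian in block form with respect to the decomposition $K^{m+n} = K^m \times K^n$ of both source and target, functoriality and the description of $\jac$ in coordinates from Section~\ref{section:nash-derivative} give
\[
\jac_F(a,b) = \begin{pmatrix} I_m & 0 \\ \jac_{f,x}(a,b) & \jac_{f,y}(a,b) \end{pmatrix},
\]
where $\jac_{f,x}$ is the matrix of the $\der f_i/\der x_j$. The determinant of this block-triangular matrix is $\det \jac_{f,y}(a,b)$, which is nonzero by hypothesis, so $\jac_F(a,b)$ is invertible. By Proposition~\ref{prop:nash-inverse}, $F$ is a Nash isomorphism at $(a,b)$: there is an \'etale-open neighborhood of $(a,b)$ on which $F$ restricts to a homeomorphism onto an \'etale-open neighborhood $U$ of $(a,0)$, with Nash inverse $G \colon U \to O$. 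Since $F$ fixes the first $m$ coordinates, so does $G$; write $G(x,z) = (x, g(x,z))$ for a Nash map $g$ (its components are Nash by Fact~\ref{fact:nash basic}(2) again). The defining relation $F \circ G = \id$ unpacks to $f(x, g(x,z)) = z$ for all $(x,z) \in U$.

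Finally I would extract the actual statement. Using Fact~\ref{fact:slice}, the slice $\{x \in K^m : (x,0) \in U\}$ is \'etale-open; intersecting with the projection of $U$ (also \'etale-open by Fact~\ref{fact:slice}(2)) I can find an \'etale-open neighborhood $P \ni a$ in $K^m$ such that $(x,0) \in U$ for all $x \in P$. Set $s(x) = g(x,0)$; this is a Nash map $P \to K^n$ by Fact~\ref{fact:nash basic}(1) composed with the Nash inclusion $x \mapsto (x,0)$. Then $s(a) = g(a,0) = b$ (since $G(a,0) = (a,b)$), and $f(a^*, s(a^*)) = f(a^*, g(a^*,0)) = 0$ for all $a^* \in P$. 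Shrinking $P$ if necessary and letting $S \subseteq K^n$ be an \'etale-open neighborhood of $b$ containing $s(P)$ — which exists because $s$ is continuous, being a Nash map, so $s(P)$ can be arranged to land in any prescribed neighborhood of $b = s(a)$ after shrinking $P$ — gives the asserted $s \colon P \to S$.

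The only genuinely delicate point is the block-matrix computation of $\jac_F$: I must check that the commutative-algebraic Nash Jacobian of Section~\ref{section:nash-derivative} really does split along the product decomposition and reproduces the naive matrix of partials, so that "$\jac_{f,y}(a,b)$ invertible" in the statement matches "the lower-right block of $\jac_F(a,b)$". This follows from functoriality (the projections $\A^{m+n} \to \A^m$, $\A^{m+n} \to \A^n$ are morphisms, so their Nash Jacobians are the expected projections) together with the compatibility of the Nash Jacobian with products implicit in Fact~\ref{fact:nash basic}(2) and the coordinate description given just before Proposition~\ref{prop:nash-diff}; I would spell this out but expect no surprises. Everything else is a routine transcription of the classical argument, with ordinary open sets replaced by \'etale-open sets and Fact~\ref{fact:slice} doing the bookkeeping.
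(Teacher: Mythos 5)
Your proposal is correct and follows essentially the same route as the paper: define the auxiliary Nash map $(x,y)\mapsto(x,f(x,y))$, verify its Jacobian at $(a,b)$ is invertible via the block-triangular determinant, apply Proposition~\ref{prop:nash-inverse}, and extract $s$ from the inverse using Fact~\ref{fact:slice}. The only differences are cosmetic: you spell out the block-Jacobian computation that the paper dismisses as ``identical with the usual case over $\R$,'' and you obtain $S$ by shrinking $P$ via continuity rather than by projecting the domain as the paper does---both are fine.
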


We follow a standard proof of the implicit function theorem over $\R$ and omit some details.

\begin{proof}
Let $h \colon O \to K^{m + n}$ be given by $h(x,y) = (x,f(x,y))$.
An argument identical with the usual case over $\R$ shows that $\jac_h(a, b)$ is invertible.
By the Nash inverse function theorem there are \'etale-open neighborhoods $(a,b) \in O \subseteq K^{m + n}$, $(a,0) \in O^* \subseteq K^{m + n}$ such that $h$ gives a Nash isomorphism $O \to O^*$.
Thus for every $(a^*,0) \in O^*$ and $(c,d) \in O$ we have 
$$h(c,d) = (a^*,0) \quad\Longleftrightarrow\quad (c,f(c,d)) = (a^*,0) \quad\Longleftrightarrow\quad c = a^* \text{  and  } f(a^*,d) = 0.$$
Let $P$ be the set of $a^* \in K^m$ such that $(a^*,0) \in O^*$.
Then $P$ is an \'etale-open neighborhood of $a$ by Fact~\ref{fact:slice}(1).
For any $a^* \in P$ there is a unique $(c,d) \in O$ such that $h(c,d) = (a^*,0)$, hence $c = a^*$ and $f(a^*,d) = 0$.
Let $S$ be the set of $d \in K^n$ such that $(a^*,d) \in O$ for some $a^* \in P$, this is an \'etale-open neighborhood of $b$ by Fact~\ref{fact:slice}(2).
Let $s\colon P \to S$ be given by  $s(a^*) = \uppi((h|_O)^{-1} (a^*,0))$, where $\uppi$ is the projection $K^{m + n} \to K^n$.
Finally, $s$ is a composition of three Nash maps and is hence a Nash map.
\end{proof}



It is also possible to prove the inverse and implicit functions theorems for Nash maps by applying Fact~\ref{fact:alg power series} and results on algebraic power series.

\subsection{Nash submersions}
We now wish to prove analogues of the standard local description of the diffeomorphism type of a germ of a submersion between smooth manifolds, i.e. we show that a Nash submersion is locally Nash isomorphic to a projection.

\begin{lemma}
\label{lem:submersion0}
Suppose $V,W$ are smooth $K$-varieties, $O$ is a non-empty open subset of $V(K)$, and $f \colon O \to W(K)$ is a Nash map.
Then the following are equivalent:
\begin{enumerate}
\item $f$ is a submersion.
\item if $(X,e,h,P)$ is Nash data for $f$ then $h$ is smooth on an open subvariety $U$ of $X$ containing $P$.
\item there is Nash data $(X,e,h,P)$ for $f$ such that $h$ is a smooth morphism.
\end{enumerate}
\end{lemma}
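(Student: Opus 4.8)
The plan is to mirror the proof of Lemma~\ref{lem:nash sub}, upgrading the pointwise condition ``$\jac_p(f)$ surjective'' to the open condition ``$h$ smooth on a neighborhood of $P$'', using that smoothness is an open condition on the source scheme. First I would dispose of the easy implications. The implication (2)$\Rightarrow$(3) is immediate, since any Nash map admits \emph{some} Nash data $(X,e,h,P)$, and (2) then hands us a smooth morphism $h|_U \colon U \to W$; since $e|_U \colon U \to V$ is still \'etale and $P \subseteq U$, the tuple $(U, e|_U, h|_U, P)$ is Nash data for $f$ with $h|_U$ a smooth morphism. So the content is in the two remaining directions: (3)$\Rightarrow$(1) and (1)$\Rightarrow$(2).

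For (3)$\Rightarrow$(1): fix Nash data $(X,e,h,P)$ with $h$ a smooth morphism, and fix $p \in O$ with $q = (e|_P)^{-1}(p) \in P$. By Lemma~\ref{lem:nash sub}, $f$ is a submersion at $p$ if and only if $h$ is smooth at $q$; but $h$ is smooth everywhere, so in particular at $q$. Since $p \in O$ was arbitrary, $f$ is a submersion. For (1)$\Rightarrow$(2): let $(X,e,h,P)$ be arbitrary Nash data for $f$. For each $p \in O$ with $q = (e|_P)^{-1}(p)$, Lemma~\ref{lem:nash sub}(1)$\Leftrightarrow$(3) gives that $h$ is smooth at $q$. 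Thus $h$ is smooth at every point of $P$. Now I invoke the standard scheme-theoretic fact (the analogue of the sentence ``Recall that if $V \to W$ is \'etale at $p$ then $V \to W$ is \'etale on an open subvariety of $V$ containing $p$'' already used for Corollary~\ref{cor:etale-homeo}) that the locus of points of $X$ at which $h$ is smooth is Zariski open in $X$; call it $U$. Then $U$ is an open subvariety of $X$ containing $P$ and $h|_U$ is smooth, which is exactly (2).

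The one point requiring a little care is whether the map $e|_P \colon P \to O$ is genuinely bijective, so that $q = (e|_P)^{-1}(p)$ makes sense for each $p$; this is built into the definition of Nash data (the requirement that $e$ restrict to a \emph{homeomorphism} $P \to O$), so no new work is needed. I expect the main --- and essentially only --- obstacle to be a bookkeeping one: making sure that when I shrink $X$ to its smooth-for-$h$ locus $U$ I have not disturbed the \'etaleness of $e$ (it restricts to $e|_U \colon U \to V$, still \'etale since \'etaleness is preserved by restriction to open subschemes) nor the condition $P \subseteq U$ (guaranteed because $h$ is smooth at every point of $P$). Once these are noted the proof is just three short paragraphs chaining Lemma~\ref{lem:nash sub} with the openness of the smooth locus.

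\begin{proof}
Clearly (2) implies (3): given any Nash data $(X,e,h,P)$ for $f$ and the open $U \subseteq X$ furnished by (2), the tuple $(U, e|_U, h|_U, P)$ is again Nash data for $f$ and $h|_U \colon U \to W$ is a smooth morphism.

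Suppose (3) holds, and let $(X,e,h,P)$ be Nash data for $f$ with $h$ smooth. Fix $p \in O$ and let $q = (e|_P)^{-1}(p)$. Since $h$ is smooth it is in particular smooth at $q$, so $f$ is a submersion at $p$ by Lemma~\ref{lem:nash sub}. As $p$ was arbitrary, $f$ is a submersion, giving (1).

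Finally suppose (1) holds, and let $(X,e,h,P)$ be arbitrary Nash data for $f$. For each $p \in O$ set $q = (e|_P)^{-1}(p)$; by Lemma~\ref{lem:nash sub} applied to $p$, the morphism $h$ is smooth at $q$. Hence $h$ is smooth at every point of $P$. The locus of points of $X$ at which $h$ is smooth is a Zariski open subvariety $U$ of $X$, and by the previous sentence $P \subseteq U$. Thus (2) holds.
\end{proof}
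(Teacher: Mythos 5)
Your proof is correct and fills in exactly the argument the paper indicates: the paper says Lemma~\ref{lem:submersion0} "follows from Lemma~\ref{lem:nash sub} and the fact that the smooth locus of a morphism $V \to W$ is a Zariski open subvariety," and your three-step cycle (2)$\Rightarrow$(3)$\Rightarrow$(1)$\Rightarrow$(2) is precisely this, with the pointwise equivalence supplied by Lemma~\ref{lem:nash sub} and the passage from pointwise to open neighborhoods supplied by openness of the smooth locus. No substantive differences from the intended proof.
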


Lemma~\ref{lem:submersion0} follows from Lemma~\ref{lem:nash sub} and the fact that the smooth locus of a morphism $V \to W$ is a Zariski open subvariety.
Fact~\ref{fact:smooth-open} is \cite[Prop.~3.2]{secondpaper}.

\begin{fact}\label{fact:smooth-open}
If $V \to W$ is smooth then $V(K) \to W(K)$ is an open map in the \'etale-open topology.
\end{fact}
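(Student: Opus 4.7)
The plan is to reduce openness for a general smooth morphism to the combination of two tools already available: the local homeomorphism property of \'etale maps (Corollary~\ref{cor:etale-homeo}) and the openness of coordinate projections (Fact~\ref{fact:slice}(2)). The key algebraic-geometric input is the standard local structure theorem for smooth morphisms: if $f\colon V \to W$ is smooth of relative dimension $n$ at $p \in V$, then there is a Zariski open neighborhood $U$ of $p$ in $V$, an \'etale morphism $g\colon U \to W \times \A^n$, and the coordinate projection $\uppi\colon W \times \A^n \to W$ such that $f|_U = \uppi \circ g$. This is a standard result and I would simply cite EGA IV.

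To prove the statement, fix an \'etale-open subset $O \subseteq V(K)$; it suffices to show that every $f(p)$ with $p \in O$ is an interior point of $f(O)$ in the \'etale-open topology on $W(K)$. First, I would pass to a Zariski open neighborhood $U$ of $p$ on which $f$ factors as $f|_U = \uppi \circ g$ with $g\colon U \to W \times \A^n$ \'etale, noting that $O \cap U(K)$ is still \'etale-open by the open immersion property and serves as an \'etale-open neighborhood of $p$ in $V(K)$. By Corollary~\ref{cor:etale-homeo}, $g$ restricts to a homeomorphism between some \'etale-open neighborhood $O' \subseteq O \cap U(K)$ of $p$ and an \'etale-open subset $Q$ of $(W \times \A^n)(K) = W(K) \times K^n$.

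Then by Fact~\ref{fact:slice}(2), the image $\uppi(Q)$ is an \'etale-open subset of $W(K)$. But $\uppi(Q) = \uppi(g(O')) = f(O') \subseteq f(O)$, and $f(p) \in f(O')$, so $f(p)$ lies in the interior of $f(O)$. Since $p \in O$ was arbitrary, $f(O)$ is \'etale-open in $W(K)$, completing the proof.

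The main potential obstacle is making sure the local factorization through $W \times \A^n$ is available in the form stated; once that is cited, the argument is a short assembly of Corollary~\ref{cor:etale-homeo} and Fact~\ref{fact:slice}(2). A secondary point worth a sentence is that since $g$ is a local homeomorphism on \'etale-opens, one may have to shrink $O'$ to an \'etale-open neighborhood inside $O \cap U(K)$ on which $g$ is an actual homeomorphism onto an \'etale-open subset of $W(K) \times K^n$; this is immediate from Corollary~\ref{cor:etale-homeo} together with Fact~\ref{fact:slice}(1) applied to intersect with the preimage of the homeomorphism's domain.
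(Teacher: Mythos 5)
Your argument is correct, and it is worth noting that the paper does not prove this statement at all: it is a \emph{Fact} cited verbatim from \cite[Prop.~3.2]{secondpaper}, so there is no internal proof to compare against. What you have written is the natural derivation from the paper's own toolkit, and it is essentially the same assembly the author uses one step later to prove Lemma~\ref{lem:smooth-case} (local structure of smooth morphisms as \'etale followed by a projection, plus openness of projections via Fact~\ref{fact:slice}(2)). Your shrinking step at the end is fine, though you do not actually need Fact~\ref{fact:slice}(1) there: if $g|_{O''}$ is a homeomorphism onto the \'etale-open set $Q$, then for $O' = O'' \cap O \cap U(K)$ the image $g(O')$ is open in $Q$ and hence in $(W\times\A^n)(K)$ directly. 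One genuine refinement is available: you invoke Corollary~\ref{cor:etale-homeo}, which rests on Fact~\ref{fact:etale-homeo} and therefore on the hypothesis that $K$ is large and not separably closed (harmless here, since that is the standing assumption of Section~3, but the cited result holds for arbitrary $K$). You can avoid it entirely, because \'etale morphisms induce \emph{open} maps on $K$-points by the very definition of the \'etale-open topology: an \'etale-open subset of $U(K)$ is a union of sets $h(X(K))$ with $h\colon X\to U$ \'etale, and then $g\circ h$ is \'etale, so $g$ carries it to a union of basic opens of $(W\times\A^n)(K)$. Substituting this for Corollary~\ref{cor:etale-homeo} gives the statement for all fields and relative dimensions (including $d=0$, where the factorization is trivial), which is presumably how \cite{secondpaper} proves it.
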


Fact~\ref{fact:smooth-factor} is \cite[\textsection 2.2 Prop.~11]{Neron}.

\begin{fact}
\label{fact:smooth-factor}
Suppose that $f \colon V \to W$ is a smooth morphism, $p \in V$, and the relative dimension of $f$ at $p$ is $d \ge 1$.
Then there is an open subvariety $U$ of $V$ containing $p$ such that the restriction of $f$ to $U$ factors as $U \to W \times \A^d \to W$ where $U \to W \times \A^d$ is \'etale $W \times \A^d \to W$ is the projection.
\end{fact}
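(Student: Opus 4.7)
The plan is to construct the factorization explicitly by choosing $d$ auxiliary regular functions on $V$ near $p$ whose relative differentials form a basis of $\Omega^1_{V/W}$ there, and then packaging them together with $f$ into a morphism to $W\times\A^d$ that will turn out to be \'etale at $p$.

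First I would replace $V$ by an affine open neighborhood of $p$ on which $\Omega^1_{V/W}$ is locally free of rank $d$; this is possible since $f$ is smooth of relative dimension $d$ at $p$. Choose $g_1,\ldots,g_d\in\Oo_V(V)$ whose differentials reduce to a basis of $\Omega^1_{V/W}\otimes\kappa(p)$; by Nakayama's lemma, after further shrinking $V$ I may assume $dg_1,\ldots,dg_d$ form an $\Oo_V$-basis of $\Omega^1_{V/W}$ on all of $V$. Define
$$\phi\colon V \longrightarrow W\times\A^d, \qquad \phi = (f,g_1,\ldots,g_d),$$
so that $\mathrm{pr}_W\circ\phi=f$ tautologically, where $\mathrm{pr}_W$ is the projection to $W$.

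It remains to verify that $\phi$ is \'etale at $p$; the conclusion then follows by taking $U$ to be the open \'etale locus of $\phi$, which contains $p$. Apply the relative cotangent sequence
$$\phi^*\Omega^1_{(W\times\A^d)/W}\longrightarrow\Omega^1_{V/W}\longrightarrow\Omega^1_{V/(W\times\A^d)}\longrightarrow 0.$$
Writing $x_1,\ldots,x_d$ for the standard coordinates on $\A^d$, we have $\phi^*dx_i=dg_i$, so by construction the first arrow is surjective at $p$. Hence $\Omega^1_{V/(W\times\A^d)}$ vanishes at $p$, i.e., $\phi$ is unramified there. A dimension count gives $\dim_p V = \dim_{f(p)}W+d = \dim_{\phi(p)}(W\times\A^d)$.

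The main obstacle I anticipate is promoting unramifiedness plus the dimension count to \'etaleness by establishing flatness of $\phi$ at $p$. I would argue as follows: since $f=\mathrm{pr}_W\circ\phi$ is smooth at $p$ and $\mathrm{pr}_W$ is smooth, the cotangent sequence displayed above is in fact short exact on the left at $p$; combined with unramifiedness, this forces $\phi^*\Omega^1_{(W\times\A^d)/W}\to\Omega^1_{V/W}$ to be an isomorphism of locally free sheaves of rank $d$ at $p$. The Jacobian criterion for \'etaleness, applied to the local homomorphism $\Oo_{W\times\A^d,\phi(p)}\to\Oo_{V,p}$ and using smoothness of $f$ to supply the flatness input, then yields \'etaleness of $\phi$ at $p$ and hence on an open neighborhood of $p$, which we take as $U$.
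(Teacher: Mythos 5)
The paper offers no proof of Fact~\ref{fact:smooth-factor}, citing it as \S 2.2, Prop.~11 of the N\'eron models book, and your argument is essentially the proof given there: pick $g_1,\dots,g_d$ whose relative differentials trivialize $\Omega^1_{V/W}$ near $p$, form $\phi=(f,g_1,\dots,g_d)$, and invoke the criterion that a $W$-morphism of smooth $W$-schemes inducing an isomorphism on relative differentials is \'etale. One small repair: the relative cotangent sequence for $V\to W\times\A^d\to W$ is not left exact merely because $f$ and the projection are smooth (left exactness would require smoothness of $\phi$ itself, which is what you are trying to prove), but you do not need it --- your map sends the basis $\phi^*dx_1,\dots,\phi^*dx_d$ of $\phi^*\Omega^1_{(W\times\A^d)/W}$ to the basis $dg_1,\dots,dg_d$ of $\Omega^1_{V/W}$ near $p$, so it is an isomorphism of free rank-$d$ modules by construction.
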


Lemma~\ref{lem:smooth-case} follows from Fact~\ref{fact:smooth-factor} and Fact~\ref{fact:etale-homeo}.

\begin{lemma}
\label{lem:smooth-case}
Suppose that $f \colon V \to W$ is a smooth morphism of irreducible $K$-varieties, $p \in V(K)$, and $f$ has relative dimension $d \ge 1$ at $p$.
Let $\uppi$ be the projection $W \times K^d \to W$.
Then there are \'etale-open neighborhoods $p \in O \subseteq V(K)$  and $f(p) \in P \subseteq W(K)$, an \'etale-open $O^* \subseteq W(K) \times K^d$ such that $f(p) \in \uppi(O^*) = P$, and a Nash isomorphism $O \to O^*$ such that the following diagram commutes.
\begin{center}
\begin{tikzcd}
                                & O^* \arrow[rd, "\uppi"] &   \\
O \arrow[rr, "f|_O"] \arrow[ru] &                       & P
\end{tikzcd}
\end{center}
\end{lemma}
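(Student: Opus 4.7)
The plan is to leverage Fact~\ref{fact:smooth-factor} to replace the smooth morphism $f$ locally by an \'etale morphism composed with a projection, and then invoke Fact~\ref{fact:etale-homeo} (via Corollary~\ref{cor:etale-homeo}) to promote the \'etale factor to a Nash isomorphism on \'etale-open neighborhoods. First I apply Fact~\ref{fact:smooth-factor} at $p$: since $f$ is smooth of relative dimension $d \ge 1$ there, I obtain an open subvariety $U \subseteq V$ containing $p$ and a factorization $f|_U = \uppi \circ \varphi$, with $\varphi \colon U \to W \times \A^d$ \'etale and $\uppi \colon W \times \A^d \to W$ the projection. The intended Nash isomorphism will be the restriction of $\varphi$ to a suitable \'etale-open neighborhood of $p$.

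Next I apply Corollary~\ref{cor:etale-homeo} to $\varphi$ at $p \in U(K)$. This supplies \'etale-open neighborhoods $p \in O \subseteq U(K)$ and $\varphi(p) \in O^* \subseteq (W \times \A^d)(K)$ such that $\varphi$ restricts to a homeomorphism $O \to O^*$. Since $U \hookrightarrow V$ is an open immersion, $O$ is also \'etale-open in $V(K)$. I then set $P := \uppi(O^*)$, which is \'etale-open in $W(K)$ by Fact~\ref{fact:slice}(2) and satisfies $f(p) = \uppi(\varphi(p)) \in P$ by construction.

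It remains to check that $\varphi|_O \colon O \to O^*$ is a Nash isomorphism making the displayed triangle commute. Let $\iota \colon U \hookrightarrow V$ denote the open immersion (which is in particular \'etale). Then $(U, \iota, \varphi, O)$ is Nash data for $\varphi|_O$, since $\iota|_O$ is a homeomorphism onto $O$ and $\varphi = \varphi \circ \iota|_O^{-1}$ on $O$. Symmetrically, $(U, \varphi, \iota, O)$ is Nash data for the inverse $(\varphi|_O)^{-1} \colon O^* \to O$; the slot for the \'etale morphism in this second bundle of data is played by $\varphi$ itself, which is legitimate precisely because $\varphi|_O$ is a homeomorphism. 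Commutativity $\uppi \circ \varphi|_O = f|_O$ is immediate from the factorization $f|_U = \uppi \circ \varphi$. The argument is essentially bookkeeping, and I do not anticipate any real obstacle; the only mildly subtle point is that $\varphi$ plays two different roles — as the \'etale factor of $f|_U$ and as the \'etale morphism in the Nash data witnessing the inverse map — and no substantive input beyond the two cited facts is required.
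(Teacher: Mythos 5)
Your proposal is correct and follows exactly the route the paper intends: the paper's proof is the one-line remark that the lemma follows from Fact~\ref{fact:smooth-factor} and Fact~\ref{fact:etale-homeo}, and you have filled in precisely those details, including the correct observation that the \'etale factor $\varphi$ itself serves as the \'etale morphism in the Nash data witnessing the inverse map. No gaps.
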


We now give the local description of Nash submersions.

\begin{lemma}
\label{lem:smooth-case-1}
Let $V, W$ be smooth and irreducible, $D$ be a nonempty open subset of $V(K)$, and $f \colon D \to W(K)$ be a Nash submersion.
Set $m = \dim V$ and $n = \dim W$ and let $\uppi$ be the projection $K^m \to K^n$ onto the first $n$ coordinates.
Then there are \'etale-open neighborhoods $p \in O \subseteq V(K)$ and $f(p) \in P \subseteq W(K)$,  \'etale-open subsets $O^* \subseteq K^m$ and $P^*\subseteq K^n$, and Nash isomorphisms $O \to O^*$ and $P \to P^*$, such that $P^* = \uppi(O^*)$ and the following commutes.
\[ \xymatrix{
O \ar[r]^{f} \ar[d] & \ar[d] P  \\ O^*
\ar[r]^{\uppi} & P^*}\]
\end{lemma}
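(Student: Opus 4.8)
The plan is to deduce Lemma~\ref{lem:smooth-case-1} from Lemma~\ref{lem:smooth-case} by a change of coordinates on the base. First I would apply Lemma~\ref{lem:smooth-case} to the Nash submersion $f$ — more precisely, to Nash data for $f$. By Lemma~\ref{lem:submersion0} we may fix Nash data $(X,e,h,Q)$ for $f$ such that $h$ is a smooth morphism; shrinking as usual we may take $X$, and hence also the image $W$-component, irreducible, and we may assume $h$ has relative dimension $d = m - n \geq 0$ at the relevant point. If $d = 0$ then $h$ is \'etale and $f$ is a Nash isomorphism onto an \'etale-open subset of $W(K)$, so the statement is immediate with $\uppi$ the identity; so assume $d \geq 1$. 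Then Lemma~\ref{lem:smooth-case} applied to $h \colon X \to W'$ (where $W'$ is the irreducible component carrying the image) gives \'etale-open neighborhoods and a Nash isomorphism $\alpha \colon O_1 \to O^*_1$ with $O^*_1 \subseteq W'(K) \times K^d$ fitting into a commuting triangle over the projection $\uppi' \colon W'(K) \times K^d \to W'(K)$. Composing with the Nash isomorphism $e$ between the relevant \'etale-open subsets of $X(K)$ and $V(K)$, I obtain a Nash isomorphism from an \'etale-open neighborhood $O$ of $p$ in $V(K)$ onto an \'etale-open subset of $W'(K) \times K^d$ under which $f$ becomes the projection to $W'(K)$.

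The remaining issue is that the target in Lemma~\ref{lem:smooth-case-1} must be an \'etale-open subset of $K^n$, reached via a Nash isomorphism $P \to P^*$, and the projection must be the \emph{coordinate} projection $K^m \to K^n$ rather than the mixed projection $W'(K) \times K^d \to W'(K)$. So next I would choose, using smoothness of $W'$ at $f(p)$ and Corollary~\ref{cor:etale-homeo} (take an \'etale chart $W' \to \A^n$ near $f(p)$, which exists since $W'$ is smooth of dimension $n$), a Nash isomorphism $\beta \colon P \to P^*$ from an \'etale-open neighborhood $P$ of $f(p)$ in $W'(K)$ onto an \'etale-open subset $P^* \subseteq K^n$. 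Shrinking $P$, $O$, and the auxiliary sets compatibly so that $f(O) \subseteq P$, the product map $\beta \times \mathrm{id}_{K^d}$ is a Nash isomorphism from the relevant \'etale-open subset of $W'(K) \times K^d$ onto an \'etale-open subset $O^*$ of $K^n \times K^d = K^m$ (using Fact~\ref{fact:nash basic}(2) to see the product of Nash maps is Nash), and it intertwines $\uppi'$ with the coordinate projection $\uppi \colon K^m \to K^n$. Chasing the diagram, $P^* = \uppi(O^*)$ by construction, since $\uppi(O^*) = \beta(\uppi'(O^*_1))$ and $\uppi'(O^*_1) = P$ by the conclusion of Lemma~\ref{lem:smooth-case} (after the compatible shrinking, which one arranges by first picking $P$ inside the given $P$-neighborhood and pulling back). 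Composing all these Nash isomorphisms, and invoking Fact~\ref{fact:nash basic}(1) so that composites are again Nash, yields the desired commuting square with $O \to O^*$ and $P \to P^*$ Nash isomorphisms.

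The main obstacle I anticipate is bookkeeping with the nested shrinkings: each application of an inverse/homeomorphism statement only controls the relevant maps on a possibly smaller \'etale-open neighborhood, and one must make the four sets $O, P, O^*, P^*$ simultaneously small enough that (i) $f(O) \subseteq P$, (ii) $\alpha$, $e$, $\beta$ are honest homeomorphisms there, and (iii) the equality $P^* = \uppi(O^*)$ survives. The clean way to manage this is to fix the target neighborhood $P^*$ (equivalently $P$) \emph{first}, then pull it back through the projection and the Nash isomorphisms to define $O^*$ and $O$ as preimages; since preimages of \'etale-open sets under continuous — in particular Nash — maps are \'etale-open, and since the relevant maps are homeomorphisms onto their images, all the required properties are then automatic. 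There is also the trivial $d = 0$ degenerate case to record separately, as noted above. Apart from this, every step is an application of a result already established — Lemma~\ref{lem:submersion0}, Lemma~\ref{lem:smooth-case}, Corollary~\ref{cor:etale-homeo}, and Fact~\ref{fact:nash basic} — so no genuinely new idea is needed.
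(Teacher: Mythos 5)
Your proof is correct and follows essentially the same route as the paper: reduce to Lemma~\ref{lem:smooth-case} (via Nash data for $f$), then post-compose with an \'etale chart $e \colon U \to \A^n$ on the smooth base $W$, lifted to $e \times \mathrm{id}$ on $W \times \A^d$, to turn $W(K) \times K^d$ into $K^m$. You are somewhat more explicit than the paper on two points the paper leaves implicit: the passage through Nash data $(X,e,h,Q)$ to justify applying Lemma~\ref{lem:smooth-case} to the Nash submersion $f$ (the paper applies it directly to $f$, tacitly using the Nash-data factorization), and the degenerate case $m = n$.
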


\begin{proof}
Applying Lemma~\ref{lem:smooth-case} fix \'etale-open $p \in O \subseteq V(K)$ and $f(p) \in P \subseteq W(K)$, an \'etale-open $O^{**} \subseteq W(K) \times K^d$ such that $f(p) \in \uppi(O^{**}) = P$, and a Nash isomorphism $O \to O^{**}$ such that $O \to P$ factors as $O \to O^{**} \to P$, where $O^{**} \to P$ is the projection.
As $W$ is smooth there is an open subvariety $U$ of $W$ containing $p$ and an \'etale morphism $e \colon W \to \A^n$.

\begin{center}
\begin{tikzcd}
O \arrow[d] \arrow[r] & O^{**} \arrow[ld] \arrow[r] & O^* \arrow[d] \\
P \arrow[rr]          &                             & P^*           
\end{tikzcd}
\end{center}

After applying Fact~\ref{fact:etale-homeo} and possibly shrinking $P$ we may suppose that $e$ restricts to an open embedding $P \to K^n$, let $P^* = e(P)$, and let $P \to P^*$ be the restriction of $e$.
So $P \to P^*$ is a Nash isomorphism.
Let $h \colon U \times \A^{m - n} \to \A^n \times \A^{m - n}$ be the morphism $h(x,y) = (e(x), y)$.
Then $h$ is \'etale by an easy Jacobian computation.
After shrinking everything we may suppose that $h$ restricts to an open embedding $O^{**} \to K^m$.
Let $O^* = h(O^{**})$ and let $O \to O^*$ be the composition of $O \to O^{**}$ with $h$.
Then $O \to O^*$ is a composition of two Nash isomorphisms and is hence a Nash isomorphism.
\end{proof}

Fact~\ref{fact:smooth-fiber} follows from \cite[II Thm.~8.17]{hartshorne}.

\begin{fact}\label{fact:smooth-fiber}
Suppose that $V$ is a smooth $m$-dimensional irreducible $K$-variety, $W$ is a smooth $n$-dimensional subvariety of $V$, and $p \in W$.
Then there is an open subvariety $U$ of $V$ containing $p$ and a smooth morphism $f \colon U \to \A^{m - n}$ such that $W \cap U = f^{-1}(0)$.
\end{fact}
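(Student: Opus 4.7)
The plan is to realize $W$ near $p$ as the vanishing locus of a family of local equations whose differentials are linearly independent at $p$, and then read off smoothness of the resulting map $V \to \A^{m-n}$ from the Jacobian criterion.

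First I would pass to the local ring $\Sa O_{V,p}$ with maximal ideal $\mfrak_{V,p}$ and let $I \subseteq \Sa O_{V,p}$ be the ideal of $W$ at $p$. Because $V$ and $W$ are smooth at $p$ of dimensions $m$ and $n$, the quotient $\mfrak_{V,p}/\mfrak_{W,p}$-analysis (equivalently, the conormal exact sequence for a closed immersion of smooth varieties) shows that the image of $I/\mfrak_{V,p} I$ inside $\mfrak_{V,p}/\mfrak_{V,p}^2$ is a subspace of dimension exactly $m - n$. In particular, by Nakayama's lemma, $I$ is generated by $m - n$ elements $f_1,\ldots,f_{m-n} \in \Sa O_{V,p}$ whose images in $\mfrak_{V,p}/\mfrak_{V,p}^2$ are linearly independent.

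Next I would spread out: the $f_i$ extend to regular functions on some affine open subvariety $U'$ of $V$ containing $p$, and by coherence of the ideal sheaf of $W$, after shrinking $U'$ we may assume $f_1,\ldots,f_{m-n}$ generate the ideal of $W \cap U'$ in $U'$. Define the morphism $f = (f_1,\ldots,f_{m-n}) \colon U' \to \A^{m-n}$. Then $f(p) = 0$ and, by construction, $W \cap U' = f^{-1}(0)$ as closed subschemes, hence as sets. The linear independence of the $df_i(p)$ means that the induced map $T_p V \to T_0 \A^{m-n}$ is surjective, so $f$ is smooth at $p$. Since the smooth locus of a morphism is Zariski open, there is an open subvariety $U \subseteq U'$ containing $p$ on which $f$ is smooth, and clearly $W \cap U = f^{-1}(0)$ still holds on this smaller open.

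The only delicate point is the very first step, namely the dimension count showing that the conormal space $I/(I \cap \mfrak_{V,p}^2)$ has dimension $m - n$; this is what forces the existence of a regular system of parameters adapted to the pair $(W,V)$ and is exactly the input from \cite[II Thm.~8.17]{hartshorne}. Everything after that is a routine application of Nakayama, the Jacobian criterion, and openness of the smooth locus.
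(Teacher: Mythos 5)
Your argument is correct and is essentially the paper's own: the paper simply derives this from \cite[II Thm.~8.17]{hartshorne}, whose content is exactly your key step that the ideal of a smooth $n$-dimensional subvariety of a smooth $m$-dimensional variety is locally generated near $p$ by $m-n$ functions with linearly independent differentials; the rest (spreading out, the Jacobian criterion for smoothness at $p$, and openness of the smooth locus) is the same routine completion the paper leaves implicit.
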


We now show that the $K$-points a smooth subvariety is locally a ``leaf" of a ``foliation" that is locally Nash isomorphic to a ``foliation" of $K^n$ by hyperplanes.

\begin{proposition}\label{prop:D}
Suppose that $V$ is an $m$-dimensional smooth irreducible $K$-variety, $W$ is a smooth $n$-dimensional subvariety of $V$, and $p \in W(K)$.
Consider $K^n$ to be a subspace of $K^m$ via the inclusion $(x_1,\ldots,x_n) \mapsto (x_1,\ldots,x_n,0,\ldots,0)$ and let $\upiota$ be the closed immersion $W \to V$.
Then there is an \'etale-open neighborhood $O \subseteq V(K)$ of $p$, an \'etale-open neighborhood $P \subseteq K^m$ of the origin, and Nash isomorphisms $O \cap W(K) \to P \cap K^n$ and $O \to P$ both taking $p$ to the origin, such that the following diagram commutes 
\[ \xymatrix{
O \cap W(K) \ar[r]^{\quad\quad \upiota} \ar[d] & \ar[d]  O  \\ P \cap K^n
\ar[r] & P}\]
where $P \cap K^n \to P$ is the inclusion.
\end{proposition}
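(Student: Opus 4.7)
The plan is to combine Fact~\ref{fact:smooth-fiber}, which realizes $W$ locally as the zero fiber of a smooth morphism out of an open subvariety of $V$, with Lemma~\ref{lem:smooth-case-1}, the local straightening of a Nash submersion to a coordinate projection.

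First I would apply Fact~\ref{fact:smooth-fiber} to obtain an open subvariety $U\subseteq V$ containing $p$ and a smooth morphism $f\colon U\to\A^{m-n}$ with $W\cap U = f^{-1}(0)$. Because $f$ is a morphism of smooth irreducible $K$-varieties, the induced map $f|_{U(K)}\colon U(K)\to K^{m-n}$ is a Nash submersion, so Lemma~\ref{lem:smooth-case-1} applied at $p$ yields \'etale-open neighborhoods $O\subseteq U(K)$ of $p$ and $Q\subseteq K^{m-n}$ of $f(p)=0$, \'etale-open subsets $O'\subseteq K^m$ and $Q'\subseteq K^{m-n}$, and Nash isomorphisms $\alpha\colon O\to O'$, $\beta\colon Q\to Q'$ satisfying $Q'=\uppi(O')$ and $\uppi\circ\alpha=\beta\circ f|_O$, where $\uppi\colon K^m\to K^{m-n}$ is projection onto the first $m-n$ coordinates.

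Since translations and linear coordinate swaps on $K^m$ are Nash isomorphisms, I would next post-compose $\alpha$ with the translation sending $\alpha(p)$ to $0$ and with the swap $\sigma\colon(x_1,\ldots,x_{m-n},y_1,\ldots,y_n)\mapsto(y_1,\ldots,y_n,x_1,\ldots,x_{m-n})$, and analogously normalize $\beta$ to fix $0$. The resulting Nash isomorphism $\gamma\colon O\to P$ (for the corresponding \'etale-open $P\subseteq K^m$) satisfies $\gamma(p)=0$, and the last $m-n$ coordinates of $\gamma(x)$ now vanish exactly when $f(x)=0$, i.e., exactly when $x\in O\cap W(K)$. Since the proposition's embedded $K^n\subseteq K^m$ is the zero locus of the last $m-n$ coordinates, this gives $\gamma^{-1}(P\cap K^n)=O\cap W(K)$.

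Finally I would verify that the restriction $\gamma|_{O\cap W(K)}$ is a Nash isomorphism onto $P\cap K^n$, after possibly shrinking $O$ and $P$. Pulling back Nash data for each of the first $n$ coordinate functions of $\gamma$ along the closed immersion $W\hookrightarrow V$ (using that \'etale base change preserves the variety structure) shows that those restrictions are Nash, so by Fact~\ref{fact:nash basic}(2) they combine to a Nash map $O\cap W(K)\to K^n$ with image in $P\cap K^n$. At any $q\in O\cap W(K)$, the invertibility of $\jac_\gamma(q)$ (Proposition~\ref{prop:nash-inverse}) combined with the vanishing of the last $m-n$ coordinates of $\gamma$ on $O\cap W(K)$ forces their differentials to annihilate $T_q W$; by dimension the first $n$ coordinates then induce an isomorphism $T_q W\to T_{\gamma(q)} K^n$. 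The Nash inverse function theorem (Proposition~\ref{prop:nash-inverse}) supplies a Nash local inverse, and shrinking $O$ and $P$ delivers a genuine Nash isomorphism $O\cap W(K)\to P\cap K^n$. The main technical point is this last tangent-space bookkeeping plus the invocation of the inverse function theorem; the rest is geometric straightening.
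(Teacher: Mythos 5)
Your proof is correct and follows essentially the same route as the paper: apply Fact~\ref{fact:smooth-fiber} to realize $W$ locally as $f^{-1}(0)$ for a smooth $f\colon U\to\A^{m-n}$, then straighten $f$ via the Nash submersion lemmas (the paper invokes Lemma~\ref{lem:smooth-case} directly rather than Lemma~\ref{lem:smooth-case-1}, which is simpler here since the target is already affine space, but this is immaterial). Your final paragraph carefully justifies that the restriction $O\cap W(K)\to P\cap K^n$ is itself a Nash isomorphism, a point the paper's proof asserts without comment; that bookkeeping is accurate and is a genuine (if small) improvement in completeness.
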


\begin{proof}
Applying Fact~\ref{fact:smooth-fiber} we let $U$ be an open subvariety of $V$ containing $p$ and fix a smooth morphism $f \colon U \to \A^{m - n}$ such that $W \cap U = f^{-1}(0)$.
By Fact~\ref{fact:smooth-open} $f$ gives an open map $U(K) \to K^{m - n}$.
Applying Lemma~\ref{lem:smooth-case} we obtain an \'etale-open neighborhood $p \in O \subseteq V(K)$, an \'etale-open neighborhood $P\subseteq K^m$, and a Nash isomorphism $O \to P$, such that the map $O \to f(O)$ given by $f$ factors as $O \to P \to f(O)$, where $P \to f(O)$ is the restriction of the projection $K^m \to K^{m - n}$.
Let $O \cap W(K) \to P \cap K^n$ be the restriction of $O \to P$ to $O \cap W(K)$.
\end{proof}

\section{Characterizations of largeness}\label{section:char}
We now drop the assumption that $K$ is large and not separably closed and assume that $K$ is an arbitrary field.
A {\bf system of topologies} over $K$ is a choice of topology on $V(K)$ for every $V$ such that:
\begin{enumerate}
\item $V(K) \to W(K)$ is continuous for any $V \to W$.
\item $V(K) \to W(K)$ is a topological closed embedding when $V \to W$ is a scheme-theoretic closed immersion.
\item $V(K) \to W(K)$ is a topological open embedding when $V \to W$ is a scheme-theoretic open immersion.
\end{enumerate}
Any field topology on $K$ induces a system of topologies over $K$.
(We assume that all field topologies are hausdorff.)
We will not distinguish between a field topology and the system that it induces.
The classical examples of systems of topologies are the Zariski topology and those induced by field topologies.
The \'etale-open topology over $K$ is a system of topologies.
A system of topologies over $K$ is discrete if it satisfies one of the following equivalent conditions.
\begin{enumerate}[leftmargin=*]
\item The topology on $K$ is discrete.
\item There is a nonempty finite open subset of $K$.
\item The topology on $V(K)$ is discrete for every $V$.
\end{enumerate}
See \cite{firstpaper} for the equivalence of these conditions and other basic facts about systems of topologies.
We say that a system of topologies over $K$ {\bf satisfies the polynomial implicit function theorem} if whenever $f \in K[x_1,\ldots,x_m,y_1,\ldots,y_n]$ and $(a,b) \in K^m \times K^n$ satisfy $f(a,b) = 0$ and $\jac_{f,y}(a,b)$ is invertible then there are open neighborhoods $a \in P \subseteq K^m$ and $b \in S \subseteq K^n$  and continuous $s \colon P \to S$ such that $s(a) = b$ and $f(a^*,s(a^*)) = 0$ for all $a^* \in P$.

\begin{proposition}\label{prop:char}
Suppose that $K$ is not the algebraic closure of a finite field.
Then the following are equivalent.
\begin{enumerate}[leftmargin=*]
\item $K$ is large.
\item There is a non-discrete system of topologies over $K$ such that $V(K) \to W(K)$ is a local homeomorphism when $V \to W$ is \'etale.
\item There is a non-discrete system of topologies over $K$ that satisfies the polynomial implicit function theorem.
\end{enumerate}
\end{proposition}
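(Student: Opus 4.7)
My plan is to establish the cyclic chain (1)$\Rightarrow$(2)$\Rightarrow$(3)$\Rightarrow$(1).

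For (1)$\Rightarrow$(2) I split on whether $K$ is separably closed. If $K$ is large and not separably closed, the \'etale-open topology itself is such a system: it is non-discrete by the defining equivalence (3) for largeness in the introduction, and Fact~\ref{fact:etale-homeo} supplies the local homeomorphism property for \'etale morphisms. If $K$ is large and separably closed, the hypothesis that $K$ is not the algebraic closure of a finite field, together with the classical characterization of fields admitting a non-trivial valuation, implies that $K$ carries a non-trivial valuation $v$; since $K$ has no non-trivial separable algebraic extensions, $v$ is automatically henselian. The field topology induced by $v$ then provides a non-discrete system of topologies in which \'etale morphisms are local homeomorphisms on $K$-points, by the classical inverse function theorem over henselian valued fields.

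For (2)$\Rightarrow$(3), given $f=(f_1,\ldots,f_n)$ and $(a,b)$ as in the hypothesis of the polynomial implicit function theorem, I consider the closed subvariety $X\subseteq\A^{m+n}$ cut out by $f_1,\ldots,f_n$. The Jacobian criterion, using that $\jac_{f,y}(a,b)$ is invertible, shows that $X$ is smooth of dimension $m$ at $(a,b)$ and that the projection $\uppi\colon X\to\A^m$ onto the first $m$ coordinates is \'etale at $(a,b)$. Restricting to an open subvariety on which $\uppi$ remains \'etale and applying (2), I obtain an open neighborhood $O$ of $(a,b)$ in $X(K)$ and an open $P\ni a$ in $K^m$ such that $\uppi$ restricts to a homeomorphism $O\to P$. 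Composing the inverse of this homeomorphism with the continuous projection $K^{m+n}\to K^n$ onto the last $n$ coordinates (valid since $O\subseteq X(K)\subseteq K^{m+n}$) yields a continuous $s\colon P\to K^n$ with $s(a)=b$ and $f(a^*,s(a^*))=0$ for all $a^*\in P$; one may take $S$ to be any open neighborhood of $b$ containing $s(P)$.

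For (3)$\Rightarrow$(1), I use characterization (2) of largeness from the introduction. Given $f\in K[x,y]$ and $(a,b)\in K^2$ with $f(a,b)=0\ne\der f/\der y(a,b)$, applying the polynomial implicit function theorem with $m=n=1$ produces an open neighborhood $P\ni a$ in $K$ and a continuous $s\colon P\to K$ satisfying $f(a^*,s(a^*))=0$ for every $a^*\in P$. Non-discreteness of the system forces $P$ to be infinite (a nonempty open subset of $K$ cannot be finite), and hence $\{(a^*,s(a^*)):a^*\in P\}$ is an infinite set of zeros of $f$ in $K^2$. The main technical input is the separably closed case of (1)$\Rightarrow$(2), which relies on the classical existence of a non-trivial (and then automatically henselian) valuation on every separably closed field other than the algebraic closure of a finite field, together with the inverse function theorem over henselian valued fields; the remaining implications are formal consequences of Fact~\ref{fact:etale-homeo}, the Jacobian criterion for \'etaleness, and the defining axioms of a system of topologies.
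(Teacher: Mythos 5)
Your proof is correct, but it is organized differently from the paper's. The paper proves (1)$\Rightarrow$(2) and (1)$\Rightarrow$(3) separately --- in the non-separably-closed case by citing the Nash inverse and implicit function theorems (Propositions~\ref{prop:nash-inverse} and~\ref{prop:nash-implicit}), and in the separably closed case by citing external results for henselian valued fields for \emph{both} (2) and (3) --- and then gives short direct arguments for (2)$\Rightarrow$(1) (the system must refine the \'etale-open topology, which is therefore non-discrete) and (3)$\Rightarrow$(1) (essentially your argument). You instead close a cycle by proving (2)$\Rightarrow$(3) purely formally: the Jacobian criterion makes the zero locus of $f$ \'etale over $\A^m$ near $(a,b)$, and the local homeomorphism property plus the axioms of a system of topologies hand you the continuous implicit function. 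This is a genuinely nicer decomposition in one respect: it shows that the implicit function theorem follows from the local homeomorphism property for \emph{any} system of topologies, with no case split and no appeal to the Nash implicit function theorem or to the valued-field literature for the separably closed case. What the paper's route buys is a stronger conclusion in the large non-separably-closed case (the implicit function is Nash, not merely continuous), which is the point of Section~\ref{section:inv-imp}; for Proposition~\ref{prop:char} itself your weaker continuous version suffices. Two small points worth making explicit if you write this up: the scheme $\Spec K[x,y]/(f_1,\ldots,f_n)$ need not be reduced, so you should pass to the open locus where $\det\jac_{f,y}$ is invertible, which is \'etale over $\A^m$ and hence smooth and reduced, before calling it a $K$-variety; and the continuity of $s$ uses all three axioms of a system of topologies (the closed immersion and open immersion axioms to identify the topology on $O$ with the subspace topology from $K^{m+n}$, and functoriality for the projection onto the last $n$ coordinates).
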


\begin{proof}
Propositions~\ref{prop:nash-inverse} and \ref{prop:nash-implicit} show that (1) implies both (2) and (3) when $K$ is not separably closed.
Suppose that $K$ is separably closed.
As $K$ is not the algebraic closure of a finite field there is a non-trivial valuation $v$ on $K$ and $v$ is henselian as any valuation on a separably closed field is henselian.
Then the system of topologies over $K$ induced by $v$ satisfies (2) and (3) by \cite[Fact~6.6]{firstpaper} and \cite[Cor.~3.3.17]{trans}, respectively.

\medskip
Let $\cT$ be a system of topologies over $K$.
Suppose that $\cT$ satisfies (2).
Then $f(V(K))$ is $\cT$-open when $f\colon V \to W$ is \'etale, hence $\cT$ refines the \'etale-open topology, hence the \'etale-open topology is not discrete, hence $K$ is large.

\medskip
Finally, suppose that $\cT$ satisfies (3).
Suppose that $f \in K[x,y]$ and $(a,b) \in K^2$ are such that $f(a,b) = 0 \ne \der f/\der y (a,b)$.
There is a $\cT$-open neighborhood $O$ of $a$ so that $f(a^*,y)$ has a root in $K$ for every $a^* \in O$.
Then $O$ is infinite as $\cT$ is not discrete, hence $K$ is large.
\end{proof}

\begin{lemma}\label{lem:syslem}
Suppose that $K$ is not separably closed and $\cT$ is a system of topologies over $K$.
Then the following are equivalent:
\begin{enumerate}[leftmargin=*]
\item $V(K) \to W(K)$ is open when $V \to W$ is \'etale.
\item $V(K) \to W(K)$ is a local homeomorphism when $V \to W$ is \'etale.
\end{enumerate}
\end{lemma}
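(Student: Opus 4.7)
The plan is to establish (1) $\Rightarrow$ (2), since the converse is immediate: a local homeomorphism is automatically an open map, so if every \'etale morphism induces a local homeomorphism on $K$-points then it induces an open map. The strategy for (1) $\Rightarrow$ (2) is to reduce to Fact~\ref{fact:etale-homeo}, the analogous statement already available for the \'etale-open topology.

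If $\cT$ is discrete then (2) holds vacuously, so I may assume $\cT$ is non-discrete. The crucial observation is that (1) forces $\cT$ to refine the \'etale-open topology on $V(K)$ for every $V$: the \'etale-open topology on $V(K)$ has basis given by images of \'etale morphisms into $V$, and (1) says precisely that each such image is $\cT$-open. Since $\cT$ is non-discrete and finer than the \'etale-open topology, the \'etale-open topology on $K$ itself must be non-discrete, which by definition means $K$ is large.

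Now $K$ is large and not separably closed, so Fact~\ref{fact:etale-homeo} applies: for any \'etale $f \colon V \to W$ and any $p \in V(K)$ there is an \'etale-open neighborhood $O$ of $p$ in $V(K)$ such that $f$ restricts to an \'etale-open homeomorphism from $O$ onto $f(O)$. In particular $f|_O$ is injective, and $O$ is $\cT$-open because $\cT$ refines the \'etale-open topology. Then $f|_O$ is $\cT$-continuous by axiom (1) of systems of topologies, $\cT$-open by hypothesis (1) of the lemma, and injective, hence a $\cT$-homeomorphism onto the $\cT$-open set $f(O)$. This gives the local homeomorphism property at $p$, and since $p \in V(K)$ was arbitrary, (2) follows. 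The main obstacle is the reduction step that promotes (1) to largeness of $K$; once that is in hand, Fact~\ref{fact:etale-homeo} does the real work and everything else unwinds from the definitions.
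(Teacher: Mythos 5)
Your proof is correct and takes essentially the same route as the paper: deduce from (1) that $\cT$ refines the \'etale-open topology, invoke Fact~\ref{fact:etale-homeo} to get local injectivity on a $\cT$-open neighborhood, and conclude since a continuous, open, injective map on an open set is a homeomorphism onto its (open) image. You are in fact slightly more careful than the paper in explicitly disposing of the discrete case and verifying that $K$ is large before applying Fact~\ref{fact:etale-homeo}.
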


If $K$ is separably closed and $\cT$ is the Zariski topology then (1) holds and (2) fails.

\begin{proof}
It is clear that (2) implies (1).
Suppose that (1) holds.
Then $\cT$ refines the \'etale-open topology.
Fix \'etale $V \to W$.
Then $V(K) \to W(K)$ is continuous and open, hence the restriction of $V(K) \to W(K)$ to some open subset $O$ of $V(K)$ is a homeomorphism onto its image if and only if it is injective.
By Fact~\ref{fact:etale-homeo} $V(K) \to W(K)$ is locally injective in the \'etale-open topology and in hence locally injective in $\cT$.
\end{proof}

Let $\uptau$ be a field topology on $K$.
Recall that $\uptau$ is {\bf gt-henselian} if for every $n$ and neighborhood $P$ of $-1$ there is a neighborhood $O$ of $0$ such that $x^{n + 1} + x^n + a_{n - 1}x^{n - 1} + \cdots + a_1 x + a_0$ has a root in $P$ for any $a_0,\ldots,a_{n - 1} \in O$.
Equivalently $\uptau$ is gt-henselian if $V(K) \to W(K)$ is $\uptau$-open when $V \to W$ is \'etale~\cite[Prop.~8.6]{field-top-2}.
Note that if follows that any gt-henselian field topology  refines the \'etale-open topology.

\begin{lemma}\label{lem:char}
Suppose that $K$ is not separably closed and $\uptau$ is a field topology on $K$.
Then the following are equivalent.
\begin{enumerate}[leftmargin=*]
\item $\uptau$ is gt-henselian.
\item $V(K) \to W(K)$ is a $\uptau$-local homeomorphism when $V \to W$ is \'etale.
\item $\uptau$ satisfies the polynomial implicit function theorem.
\end{enumerate}
\end{lemma}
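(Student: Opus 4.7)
The plan is to run the cycle $(1) \Leftrightarrow (2) \Rightarrow (3) \Rightarrow (1)$, each implication amounting to a short variant of material already in the paper. For $(1) \Leftrightarrow (2)$ I would invoke the characterization of gt-henselianity quoted in the paragraph just above the lemma: $\uptau$ is gt-henselian precisely when $V(K) \to W(K)$ is $\uptau$-open for every \'etale $V \to W$. Since $K$ is not separably closed, Lemma~\ref{lem:syslem} promotes this open-map condition to the local-homeomorphism condition of~(2), and the equivalence drops out immediately.

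For $(2) \Rightarrow (3)$ I would simply transcribe the proof of Proposition~\ref{prop:nash-implicit}, with (2) playing the role of the Nash inverse function theorem. Given $f \in K[x,y]$ with $f(a,b) = 0$ and $\jac_{f,y}(a,b)$ invertible, put $h(x,y) = (x, f(x,y))$. Its Jacobian at $(a,b)$ is block triangular with diagonal blocks $I_m$ and $\jac_{f,y}(a,b)$, hence invertible, so $h$ is \'etale on a Zariski-open subvariety of $\A^{m+n}$ containing $(a,b)$. Condition~(2) then restricts $h$ to a $\uptau$-homeomorphism $U \to U^*$ between $\uptau$-open neighborhoods of $(a,b)$ and $(a,0)$, and the same bookkeeping as in Proposition~\ref{prop:nash-implicit} produces the open set $P = \{a^* \in K^m : (a^*,0) \in U^*\}$ and the continuous section $s(a^*) = \uppi((h|_U)^{-1}(a^*,0))$, where $\uppi$ is the projection onto the last $n$ coordinates.

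For $(3) \Rightarrow (1)$ I would verify gt-henselianity directly. Fix $n \ge 1$ and a $\uptau$-neighborhood $P$ of $-1$, and consider
$$F(x_0,\ldots,x_{n-1},y) = y^{n+1} + y^n + x_{n-1} y^{n-1} + \cdots + x_0.$$
Then $F(\mathbf{0}, -1) = 0$ and $\der F/\der y (\mathbf{0}, -1) = (n+1)(-1)^n + n(-1)^{n-1} = (-1)^n$, which is $\pm 1 \ne 0$ in every characteristic. The polynomial implicit function theorem yields an open $Q \ni \mathbf{0}$ in $K^n$, an open $S \ni -1$ in $K$, and a $\uptau$-continuous $s \colon Q \to S$ with $s(\mathbf{0}) = -1$ and $F(\mathbf{a}, s(\mathbf{a})) = 0$ throughout $Q$. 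Since $s$ is continuous and the topology on $\A^n(K) = K^n$ is the $n$-fold product of $\uptau$, one can find a $\uptau$-open $O \ni 0$ in $K$ with $O^n \subseteq s^{-1}(P \cap S)$; this $O$ witnesses gt-henselianity.

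I do not anticipate a serious obstacle. The equivalence $(1) \Leftrightarrow (2)$ is near-formal from the cited characterization, $(2) \Rightarrow (3)$ is a verbatim rerun of the Nash implicit function argument, and $(3) \Rightarrow (1)$ is the standard manufacture of Hensel-type roots from an implicit function theorem. The only point needing care is the characteristic-uniform computation $\der F/\der y (\mathbf{0},-1) = (-1)^n$, which is nonzero in every field.
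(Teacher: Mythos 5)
Your proof is correct, and two of its three legs coincide with the paper's: the equivalence $(1)\Leftrightarrow(2)$ via the quoted openness characterization of gt-henselianity plus Lemma~\ref{lem:syslem}, and $(3)\Rightarrow(1)$ via the polynomial $y^{n+1}+y^n+\cdots$ (your derivative value $(-1)^n$ is the right one; the paper's $(-1)^{n-1}$ is a harmless sign slip, and both are units in every characteristic). Where you diverge is the remaining implication: the paper proves $(1)\Rightarrow(3)$ by citing Proposition~\ref{prop:nash-implicit} to get a Nash section $s$ and then upgrading its \'etale-open continuity to $\uptau$-continuity through the Nash data ($e|_{P^*}$ is injective, continuous and open for $\uptau$, hence a $\uptau$-homeomorphism, so $s=h\circ(e|_{P^*})^{-1}$ is $\uptau$-continuous), whereas you prove $(2)\Rightarrow(3)$ by inlining the implicit-function argument directly in the topology $\uptau$. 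Your route is marginally more self-contained: since the statement only concerns polynomial $f$, the auxiliary map $h(x,y)=(x,f(x,y))$ is an honest morphism of varieties, \'etale on a Zariski neighborhood of $(a,b)$, and condition~(2) applies to it verbatim; in particular you never need $K$ to be large, so you silently avoid the (trivially resolvable) corner case where $\uptau$ is discrete and Proposition~\ref{prop:nash-implicit} is not available. What the paper's route buys in exchange is reuse of an already-established result and the stronger conclusion that the implicit function is Nash, not merely $\uptau$-continuous. The only point you leave implicit is that for a field topology the slice $\{a^*:(a^*,0)\in U^*\}$ is open and the maps $a^*\mapsto(a^*,0)$ and the coordinate projections are continuous, but this is immediate from the product-topology description of $K^{m+n}$, which you do invoke in the last step.
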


This should also hold in the separably closed case, but this would require a proof that does not use the \'etale-open topology.

\begin{proof}
Lemma~\ref{lem:syslem} and the remarks above show that (1) and (2) are equivalent.
It is easy to see that (3) implies (1) by considering the polynomial $$f(x,y_1,\ldots,y_{n-1}) = x^{n+1} + x^n + y_{n - 1}x^{n - 1} + \cdots + y_1 x  + y_0$$
and noting that $\der f/\der x(-1, 0,\ldots,0) = (-1)^{n - 1} \ne 0$.

\medskip
Suppose that $\uptau$ is gt-henselian.
Then $\uptau$ refines the \'etale-open topology.
Suppose furthermore that $f \in K[x_1,\ldots,x_m,y_1,\ldots,y_n]$ and $(a,b) \in K^m \times K^n$ are such that $f(a,b) = 0$ and $\jac_{f,y}(a,b)$ is invertible.
By Proposition~\ref{prop:nash-implicit} there are \'etale-open neighborhoods $P,S$ and a Nash map $s \colon P \to S$ such that $s(a) = b$ and $f(a^*,s(a^*)) = 0$ for all $a^* \in P$.
Now $P$ and $S$ are $\uptau$-open as $\uptau$ refines the \'etale-open topology.
Let $(X,e,h,P^*)$ be Nash data for $s$, so $s = h \circ (e|_{P^*})^{-1}$.
Now $e$ is \'etale and invertible on $P^*$, so $e|_{P^*}$ is an injective continuous open map and hence a homeomorphism with respect to $\uptau$.
Therefore $s$ is a composition of two continuous maps and is hence continuous with respect to $\uptau$.
Hence (1) implies (3).
\end{proof}

\begin{corollary}\label{cor:char}
The following are equivalent:
\begin{enumerate}[leftmargin=*]
\item $K$ is large.
\item Some elementary extension of $K$ admits a non-discrete gt-henselian field topology.
\item There is an elementary extension $\K$ of $K$ and a non-discrete field topology $\uptau$ on $\K$ such that $V(\K) \to W(\K)$ is a local homeomorphism with respect to $\uptau$ for any \'etale morphism $V \to W$ of $\K$-varieties.
\item There is an elementary extension $\K$ of $K$ and a non-discrete field topology $\uptau$ on $\K$ that satisfies the polynomial implicit function theorem.
\end{enumerate}
In fact, we may take $\K$ to be any $\aleph_1$-saturated elementary extension of $K$.
\end{corollary}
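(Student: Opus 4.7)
The plan is to split the corollary into the implication $(2),(3),(4) \Rightarrow (1)$ and the substantive direction $(1) \Rightarrow (2),(3),(4)$, each further split according to whether $K$ (equivalently $\K$, by elementarity) is separably closed. Throughout I will use that any $\aleph_1$-saturated elementary extension $\K$ of $K$ has $|\K| \ge \aleph_1$ and is therefore not the algebraic closure of a finite field, so Proposition~\ref{prop:char} applies to $\K$ directly.

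For $(2),(3),(4) \Rightarrow (1)$: a non-discrete field topology on $\K$ of any of the three listed types induces a non-discrete system of topologies over $\K$ under which \'etale morphisms of $\K$-varieties are local homeomorphisms. When $\K$ is not separably closed this uses Lemma~\ref{lem:char} to pass from (2) or (4) to the local-homeomorphism condition; when $\K$ is separably closed, $K$ is already large. In the former case Proposition~\ref{prop:char} gives that $\K$ is large, and largeness transfers to $K$ because condition~(2) in the introduction exhibits largeness as a first-order property of fields.

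For $(1) \Rightarrow (2),(3),(4)$ in the separably closed case, $\K$ is separably closed and uncountable, hence admits a non-trivial valuation $v$, which is automatically henselian, and its $v$-adic topology is a non-discrete gt-henselian field topology from which $(3)$ and $(4)$ follow as in the proof of Proposition~\ref{prop:char}. The main work is in the non-separably-closed case, where the plan is to produce a non-trivial henselian valuation on $\K$. The characterization of largeness via fraction fields of henselian local domains recalled in the introduction supplies a field $L \equiv K$ with $L = \Frac(A)$ for some henselian local domain $A$ that is not a field; any valuation ring of $L$ dominating $A$ gives a non-trivial henselian valuation $v_L$ on $L$. In the expanded language $\mathcal{L}^+ = \mathcal{L}_{\mathrm{field}} \cup \{O\}$, the countable theory $T^+ = \Th_\mathcal{L}(K) \cup \{\text{``$O$ is a non-trivial henselian valuation ring''}\}$ is consistent, witnessed by $(L,v_L)$. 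The key step is that the $\aleph_1$-saturated $\K$ admits an expansion $(\K,O) \models T^+$, furnishing a non-trivial henselian valuation $v$ on $\K$; its $v$-adic topology gives $(2)$, and Lemma~\ref{lem:char} supplies $(3)$ and $(4)$ on $\K$.

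The expected main obstacle is this expansion step: concluding that $\aleph_1$-saturation of $\K$ in the countable language of fields suffices to realize the countable theory $T^+$. The cleanest route is a resplendency result for $\aleph_1$-saturated models of countable theories. Failing that, I would construct $v$ explicitly: take a countable $L_0 \preceq L$ still carrying a non-trivial henselian valuation, elementarily embed $L_0$ into $\K$ over a countable elementary substructure $K_0 \preceq K$ using $\aleph_1$-saturation, and then extend the valuation to all of $\K$ by iteratively forcing Hensel's lemma in each degree via types realized by saturation --- each such type being consistent because $(L,v_L)$ satisfies every finite fragment.
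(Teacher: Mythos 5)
Your forward direction ($(2),(3),(4)\Rightarrow(1)$) is fine and matches the paper. The problem is the main direction $(1)\Rightarrow(2)$ in the non-separably-closed case, where your argument rests on the claim that ``any valuation ring of $L$ dominating $A$ gives a non-trivial henselian valuation $v_L$ on $L$.'' This is false: henselianity does not pass from a local domain to a valuation ring dominating it. A concrete refutation: let $K$ be a pseudofinite field and $\K$ an $\aleph_1$-saturated elementary extension. Then $\K$ is PAC and not separably closed, and by the theorem of Frey--Prestel a PAC field admitting a non-trivial henselian valuation is separably closed; so $\K$ admits \emph{no} non-trivial henselian valuation whatsoever. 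Yet by Theorem~A of \cite{large->henselian}, $\K$ \emph{is} the fraction field of a proper henselian local subring $R$, and any valuation ring dominating $R$ is proper (its maximal ideal contains $\mfrak_R\ne 0$), hence would be a non-trivial henselian valuation ring if your claim held --- contradiction. So no amount of compactness, resplendency, or saturation can produce the henselian valuation you are after; the object simply does not exist for PAC fields. (As a secondary point, even granting the witness, $\aleph_1$-saturation alone does not give resplendency, and your fallback construction still presupposes the nonexistent $v_L$.)

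The paper's route avoids valuations entirely: it applies \cite[Thm.~A]{large->henselian} directly to the $\aleph_1$-saturated extension $\K$ to obtain a henselian local subring $R$ with $\K=\Frac(R)$, and then takes the \emph{$R$-adic topology} on $\K$, with basis the sets $aR+b$ for $a\in\K^\times$, $b\in\K$. This is a non-discrete field topology, and it is gt-henselian by \cite[Prop.~8.5]{field-top-2} even though it is in general not a valuation (or V-) topology. That is exactly the point of working with gt-henselian topologies rather than henselian valuations, and it is also why the question of whether every large field admits a gt-henselian topology remains open while the corresponding statement for henselian valuations is false. To repair your proof, replace the valuation-theoretic step with this $R$-adic topology; the rest of your outline (Lemma~\ref{lem:char} to pass from (2) to (3) and (4), and the separably closed case via a henselian valuation on an uncountable separably closed field) then goes through.
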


It is an open question whether every large field admits a gt-henselian field topology.

\begin{proof}
Let $\K$ be an $\aleph_1$-saturated elementary extension of $K$.
Lemma~\ref{lem:char} shows that (2), (3), and (4) are equivalent.
By Proposition~\ref{prop:char} (3) implies that $\K$ is large and hence implies that $K$ is large as large fields form an elementary class.
Suppose that $K$ is large and let $\K$ be an $\aleph_1$-saturated elementary extension of $K$.
By \cite[Thm.~A]{large->henselian} there is a henselian local subring $R$ of $\K$ such that $\K$ is the fraction field of $R$.
Let $\uptau$ be the $R$-adic topology on $\K$, i.e. the topology with basis sets of the form $aR + b$ for $a \in \K^\times, b \in \K$.
Then $\uptau$ is non-discrete as $R$ is infinite and $\uptau$ is a gt-henselian field topology by \cite[Prop.~8.5]{field-top-2}.
\end{proof}

\section{Definable functions in \'ez fields}
\label{section:ez}

A subset of $V(K)$ is \'ez if it is a finite union of definable \'etale-open subsets of Zariski closed sets.
If $K$ is not large then every subset of $V(K)$ is trivially \'ez as the \'etale-open topology is discrete.
We say that $K$ is {\bf \'ez} if it is large and every definable subset of every $K^m$ is \'ez.
We are only aware of one obstruction to being \'ez among large fields: existence of a proper infinite definable subfield.
In particular that it follows that \'ez fields are perfect.
Any perfect large field that admits quantifier elimination in some expansion of the language of rings by constants is \'ez.
In particular pseudofinite fields and infinite algebraic extensions of finite fields are \'ez.
See \cite{secondpaper} for background and more examples of \'ez fields.

\medskip
We show that definable functions in characteristic zero \'ez fields are generically locally Nash.

\begin{theorem}
\label{thm:ez-function}
Suppose that $K$ is \'ez, $\Chara(K) = 0$, $V$ and $W$ are  $K$-varieties with $V$ smooth irreducible, $O$ is a nonempty definable \'etale-open subset of $V(K)$, and $f \colon O \to W(K)$ is definable.
Then there is a dense open subvariety $U$ of $V$ and definable \'etale-open subsets $O_1,\ldots,O_k$ of $O$ such that $O \cap U(K) = \bigcup_{i = 1}^{k} O_i$ and the restriction of $f$ to each $O_i$ is Nash.
\end{theorem}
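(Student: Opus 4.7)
The plan is to analyze the graph $\Gamma_f \subseteq (V \times W)(K)$ using \'ez-ness and then certify Nash-ness piecewise. Since $f$ is definable, so is $\Gamma_f$, and applying \'ez-ness I write $\Gamma_f = \bigcup_{j=1}^{N} E_j$ where each $E_j$ is a definable \'etale-open subset of a Zariski closed subvariety $Z_j \subseteq V \times W$. By decomposing $Z_j$ into irreducible components and then replacing $Z_j$ by the Zariski closure of $E_j$ inside it --- a process that terminates by Noetherianness --- I may assume each $Z_j$ is irreducible with $E_j$ Zariski dense in $Z_j$. Let $\pi_j \colon Z_j \to V$ denote the first projection. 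Since $E_j \subseteq \Gamma_f$ lies in the graph of a function, $\pi_j|_{E_j}$ is injective. I split the pieces into three cases: (a) $\pi_j$ is not dominant, so $\pi_j(Z_j)$ lies in a proper closed subvariety $C_j \subsetneq V$; (b) $\pi_j$ is dominant with $\dim Z_j > \dim V$; (c) $\pi_j$ is dominant and generically finite, so $\dim Z_j = \dim V$.

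The main obstacle is ruling out case (b). Set $d := \dim Z_j - \dim V \ge 1$. Generic smoothness (valid in characteristic zero) yields nonempty opens $U \subseteq Z_j$ and $\tilde U \subseteq V$ such that $\pi_j|_U \colon U \to \tilde U$ is smooth of relative dimension $d$; since $E_j$ is Zariski dense in $Z_j$ and $Z_j \setminus U$ is a proper closed subvariety, there is a point $p \in E_j \cap U(K)$. Applying Fact~\ref{fact:smooth-factor} at $p$ locally factors $\pi_j|_U$ as an \'etale map $\phi \colon U_0 \to \tilde U \times \A^d$ followed by the projection $\tilde U \times \A^d \to \tilde U$, and by Corollary~\ref{cor:etale-homeo}, after shrinking $U_0$, $\phi$ restricts to a homeomorphism from an \'etale-open neighborhood $O_p$ of $p$ in $U_0(K)$ onto an \'etale-open subset of $\tilde U(K) \times K^d$. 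Let $E_j^\circ \subseteq \tilde U(K) \times K^d$ be the image of $E_j \cap O_p$: it is nonempty and \'etale-open, and commutativity of the factorization together with the injectivity of $\pi_j|_{E_j}$ forces the projection $E_j^\circ \to \tilde U(K)$ to be injective. By Fact~\ref{fact:slice}(1), for any $v$ in the image of $E_j^\circ$ the slice $\{y \in K^d : (v, y) \in E_j^\circ\}$ is \'etale-open in $K^d$, and injectivity collapses it to a singleton. Since the translations $y \mapsto y + c$ are automorphisms of $\A^d$ and hence continuous for the \'etale-open topology, every singleton in $K^d$ is \'etale-open, so the topology on $K^d$ is discrete. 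Pulling back along the closed immersion $\A^1 \hookrightarrow \A^d$, $x \mapsto (x, 0, \ldots, 0)$, then forces the \'etale-open topology on $K$ to be discrete, contradicting the largeness of $K$.

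For each $j$ in case (c), generic smoothness combined with $\dim Z_j = \dim V$ yields a dense open $U_j \subseteq V$ such that $X_j := Z_j \cap \pi_j^{-1}(U_j) \to U_j$ is \'etale. Set $e_j \colon X_j \hookrightarrow Z_j \to V$ (\'etale as a composition of an \'etale map and an open immersion), $h_j \colon X_j \to W$ equal to the restriction of the second projection, and $P_j := E_j \cap X_j(K)$, which is \'etale-open in $X_j(K)$. By Fact~\ref{fact:etale-homeo}, $e_j$ is a local homeomorphism on $K$-points, and injectivity of $e_j|_{P_j}$ (since $P_j \subseteq \Gamma_f$) upgrades this to a homeomorphism onto its image $O_j := e_j(P_j)$, a definable \'etale-open subset of $V(K)$ contained in $O$. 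Then $(X_j, e_j, h_j, P_j)$ is Nash data exhibiting $f|_{O_j} \colon O_j \to W(K)$ as a Nash map. Finally, set $U := \bigl(\bigcap_{j \in (\mathrm{c})} U_j\bigr) \setminus \bigcup_{j \in (\mathrm{a})} C_j$, a dense open subvariety of $V$. For any $a \in O \cap U(K)$, the graph point $(a, f(a))$ cannot lie in any case-(a) piece (since $a \notin C_j$) or case-(b) piece (impossible), so it belongs to some $P_j$ with $j$ in case (c), whence $a \in O_j$. Setting $O_i := O_j \cap U(K)$ for $j \in (\mathrm{c})$ then yields the desired decomposition $O \cap U(K) = \bigcup_i O_i$ with each $f|_{O_i}$ Nash.
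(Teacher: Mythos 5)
Your proof is correct, but it takes a genuinely different route from the paper's. The paper first reduces to $W=\A^1$ and then invokes Fact~\ref{fact:def-func} (imported from \cite{secondpaper}), which already hands over a decomposition of $O\cap U(K)$ into pieces whose graphs are \'etale-open in smooth hypersurfaces that are \emph{quasi-finite} over the base; after that, only Fact~\ref{fact:sard} and the construction of Nash data remain. You instead apply the raw definition of \'ez-ness to the graph $\Gamma_f\subseteq (V\times W)(K)$ and must therefore rule out by hand the possibility that a piece $E_j$ is \'etale-open and Zariski dense in an irreducible $Z_j$ dominating $V$ with $\dim Z_j>\dim V$ --- precisely the case that Fact~\ref{fact:def-func} excludes for free. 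Your argument for this (factor the generically smooth projection through an \'etale map to $\tilde U\times\A^d$ via Fact~\ref{fact:smooth-factor}, transport $E_j$ to an \'etale-open subset of $\tilde U(K)\times K^d$ by Corollary~\ref{cor:etale-homeo}, and use Fact~\ref{fact:slice}(1) plus injectivity of the graph projection to produce an open singleton in $K^d$, contradicting largeness) is sound and is the genuinely new content of your write-up; in effect you have inlined a proof of the relevant part of Fact~\ref{fact:def-func}. What your route buys is self-containedness (no reduction to $W=\A^1$, no appeal to \cite[Prop.~10.3]{secondpaper}); what the paper's route buys is brevity, since the external fact absorbs exactly the case analysis you carry out.

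A few small points to tidy up, none of which threatens the argument. First, \'ez-ness is defined for definable subsets of $K^m$, so you should first replace $V$ and $W$ by dense affine open subvarieties (as the paper does) before applying it to $\Gamma_f$; this is harmless because the conclusion only asks for a dense open $U\subseteq V$. Second, generic smoothness requires a smooth source, so in both cases (b) and (c) you should first shrink $Z_j$ to its smooth locus (dense open since $\Chara(K)=0$) and then, in case (c), remove the image of the complement from $V$ exactly as in the proof of Fact~\ref{fact:sard}. Third, $e_j$ factors as the \'etale map $X_j\to U_j$ followed by the open immersion $U_j\to V$ (you stated the composition in the opposite order). Finally, discard any empty $O_i$, since the paper's notion of Nash map is only defined on nonempty \'etale-open sets.
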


By Fact~\ref{fact:z-dense} $O \cup U(K)$ is Zariski dense in $O$.
Theorem~\ref{thm:ez-function} fails in positive characteristic.

\begin{lemma}
\label{lem:1/p}
Suppose that $K$ is large and perfect of characteristic $p \ne 0$.
Then the map $K \to K$ given by $a \mapsto a^{1/p}$ is not locally Nash at any $b \in K$.
\end{lemma}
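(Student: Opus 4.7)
The plan is to argue by contradiction from the vanishing of the derivative of a $p$-th power in characteristic $p$. Suppose $a \mapsto a^{1/p}$ is locally Nash at some $b \in K$, and fix Nash data $(X, e, h, P)$ witnessing this. Unpacking the definition, $X$ is a $K$-variety, $e \colon X \to \A^1$ is étale, $h \colon X \to \A^1$ is a morphism, and $e$ restricts to a homeomorphism from the étale-open set $P \subseteq X(K)$ onto some étale-open neighborhood $O \subseteq K$ of $b$, with the relation $h(q)^p = e(q)$ holding for every $q \in P$.

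First I would make a cosmetic reduction: after shrinking $P$ and $O$, replace $X$ by the irreducible component containing the preimage of $b$, so that $X$ is smooth, irreducible, and $1$-dimensional. Then I would promote the pointwise identity $h^p = e$ on $P$ to a genuine identity of regular functions on $X$. By Fact~\ref{fact:z-dense}, the étale-open set $P$ is Zariski dense in $X$, so the closed subscheme cut out by $h^p - e$ contains $P$ and hence equals $X$, since $X$ is reduced. This is the only place where largeness of $K$ is used, and it is the step I expect to carry what little content there is in the proof.

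The contradiction is then immediate. For any $q \in P$, the Leibniz rule gives
\[ de_q \;=\; d(h^p)_q \;=\; p \cdot h(q)^{p-1} \cdot dh_q \;=\; 0, \]
since $p = 0$ in $K$. But $e$ is étale at $q$, so $de_q \colon T_q X \to T_{e(q)}\A^1 \cong K$ must be an isomorphism of $1$-dimensional $K$-vector spaces, in particular nonzero. This contradicts $de_q = 0$ and completes the proof.
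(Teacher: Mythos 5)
Your proof is correct. The paper's own argument is a one-liner that leans on the Nash Jacobian machinery of Section~\ref{section:nash-derivative}: the map $a \mapsto a^{1/p}$ is the local inverse of the Frobenius, so by the chain rule its Jacobian at $b$ would have to invert $\jac_{\mathrm{Frob}}(b^{1/p}) = p\,(b^{1/p})^{p-1} = 0$, which is absurd. You reach the same contradiction --- the vanishing of $d(x^p)$ in characteristic $p$ --- but by a genuinely different route: you unpack the Nash data $(X,e,h,P)$ directly, upgrade the pointwise identity $h^p = e$ on $P$ to an identity of regular functions on the (reduced, irreducible) variety $X$ via Zariski density of \'etale-open sets (Fact~\ref{fact:z-dense}), and then contradict the \'etaleness of $e$ since $de_q = d(h^p)_q = 0$ cannot be an isomorphism $T_qX \to T_{e(q)}\A^1$. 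Your version is more self-contained (it needs only the definition of Nash data, Fact~\ref{fact:z-dense}, and the tangent-space characterization of \'etale, not the Jacobian functor or the chain rule), at the cost of being longer; the paper's version is shorter but presupposes the functoriality of $\jac$ and the fact that a locally Nash map has a well-defined derivative. Both uses of largeness are the same in substance: the paper's Jacobian formalism is itself built on Fact~\ref{fact:z-dense} via Fact~\ref{fact:inclusion}, which is exactly the density step you isolate as the one place largeness enters.
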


\begin{proof}
If $a \mapsto a^{1/p}$ is locally Nash at $b \in K$ then it has a derivative at $b$ and this is the inverse of the derivative of the Frobenius at $b^{1/p}$, which is zero, contradiction.
\end{proof}

Fact~\ref{fact:def-func} is \cite[Prop.~10.3]{secondpaper}.

\begin{fact}
\label{fact:def-func}
Suppose that $K$ is \'ez, $V$ is a smooth irreducible subvariety of $\A^m$, $O$ is a nonempty definable \'etale-open subset of $V(K)$, and $f \colon O \to K$ is definable.
Then there is a dense open subvariety $U$ of $V$, definable \'etale-open subsets $O_1,\ldots,O_k$ of $O$, and irreducible polynomials $h_1,\ldots,h_k \in K[x_1,\ldots,x_m,t]$ such that $O \cap U(K) = \bigcup_{i = 1}^{k} O_i$ and for every $i =1,\ldots,k$:
\begin{enumerate}
\item $h_i(a,f(a)) = 0$ and $h_i(a,t)$ is not constant zero for all $a \in O_i$,
\item the closed subvariety $W_i$ of $U \times \A^1$ given by $h_i(x_1,\ldots,x_m,t) = 0$ is smooth,
\item the graph of the restriction of $f$ to $O_i$ is an \'etale-open subset of $W_i(K)$,
\item $f$ is continuous on $O_i$.
\end{enumerate}
\end{fact}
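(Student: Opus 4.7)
The plan is to reduce to the case in which both $V$ and $W$ are affine, apply Fact~\ref{fact:def-func} componentwise to $f = (f_1, \ldots, f_n)$, and assemble the resulting one-variable algebraic data into Nash data by taking a fiber product over $V$.

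First I would reduce on the source side: since $V$ is smooth and irreducible and we only need to cover $O \cap U(K)$ for a dense open $U \subseteq V$, we may replace $V$ by a dense affine open subvariety and assume $V \subseteq \A^m$. For the target, fix a finite affine open cover $W = \bigcup_\alpha W^{(\alpha)}$ with each $W^{(\alpha)} \subseteq \A^{n_\alpha}$ and consider the definable preimages $O^{(\alpha)} := f^{-1}(W^{(\alpha)}(K)) \cap O$, which cover $O$. Apply the \'ez hypothesis to each $O^{(\alpha)}$: it is a finite union of definable \'etale-open subsets of Zariski closed subvarieties of $V$. Discard the pieces supported on proper Zariski closed subvarieties (their closures can be absorbed into $V \setminus U$), leaving a decomposition of $O \cap U(K)$ into definable \'etale-open pieces, each mapped by $f$ into a single affine chart of $W$. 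This reduces the problem to the case $W \subseteq \A^n$ affine.

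Next I would apply Fact~\ref{fact:def-func} to each component $f_i\colon O \to K$ to obtain a dense open $U_i \subseteq V$, definable \'etale-open pieces $O_{i,1},\ldots,O_{i,k_i}$ covering $O \cap U_i(K)$, and irreducible polynomials $h_{i,j} \in K[x_1,\ldots,x_m,t]$ cutting out smooth hypersurfaces $W_{i,j} \subseteq U_i \times \A^1$, such that on each $O_{i,j}$ we have $h_{i,j}(x,f_i(x)) = 0$, the graph of $f_i|_{O_{i,j}}$ is \'etale-open in $W_{i,j}(K)$, and $f_i$ is continuous. Set $U = \bigcap_i U_i$ and, for each multi-index $\vec j = (j_1,\ldots,j_n)$, put $O_{\vec j} = O \cap U(K) \cap \bigcap_i O_{i,j_i}$; the finitely many $O_{\vec j}$ are definable \'etale-open and cover $O \cap U(K)$. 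It suffices to show each $f|_{O_{\vec j}}$ is Nash. Form the fiber product $X = W_{1,j_1} \times_V \cdots \times_V W_{n,j_n} \subseteq V \times \A^n$ with projections $e\colon X \to V$ and $h\colon X \to \A^n$. The graph $\Gamma$ of $f|_{O_{\vec j}}$ lies in $X(K)$, and is \'etale-open there as the intersection over $i$ of the preimages under the continuous projections $X(K) \to W_{i,j_i}(K)$ of the \'etale-open graphs of the $f_i|_{O_{i,j_i}}$, further intersected with $e^{-1}(O \cap U(K))$. Since $\Chara(K) = 0$ and each $h_{i,j_i}$ must depend nontrivially on $t$ (otherwise $h_{i,j_i}(x) = 0$ on the Zariski dense set $O_{i,j_i}$, using Fact~\ref{fact:z-dense}, contradicting the nontriviality clause in Fact~\ref{fact:def-func}(1)), the partial $\partial h_{i,j_i}/\partial t$ is a nonzero polynomial, so its vanishing locus in $W_{i,j_i}$ projects into a proper Zariski closed subvariety $B_i \subsetneq V$. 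Shrinking $U$ to $U \setminus \bigcup_i B_i$, at every graph point $(x, f_1(x), \ldots, f_n(x))$ with $x \in O_{\vec j}$ each $\partial h_{i,j_i}/\partial t$ is nonzero. A Jacobian calculation then shows that $e$ is \'etale on the open subvariety $X^{\mathrm{et}} \subseteq X$ cut out by the nonvanishing of these derivatives, and $\Gamma \subseteq X^{\mathrm{et}}(K)$. Let $X^{\mathrm{rel}}$ be the union of those irreducible components of $X^{\mathrm{et}}$ that meet $\Gamma$. Each such component is smooth (being \'etale over smooth $V$), so by Fact~\ref{fact:z-dense} its nonempty \'etale-open intersection with $\Gamma$ is Zariski dense, forcing the component to lie inside $V \times W$; hence $h$ restricts to a morphism $X^{\mathrm{rel}} \to W$. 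The tuple $(X^{\mathrm{rel}}, e, h, \Gamma)$ is Nash data for $f|_{O_{\vec j}}$: Fact~\ref{fact:etale-homeo} makes $e$ a local homeomorphism, the continuity of $f$ (Fact~\ref{fact:def-func}(4)) together with the injectivity of $e|_\Gamma$ makes it a homeomorphism onto $O_{\vec j}$, and $f = h \circ (e|_\Gamma)^{-1}$ by construction.

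The main obstacle I expect is the reduction of the target $W$ to the affine case: one must decompose $O$ into definable \'etale-open pieces each landing in a single affine chart of $W$ while preserving the \'etale-open structure and without losing too much of $O$. This is exactly where the \'ez hypothesis is decisive, via the \'ez decomposition of the preimages $f^{-1}(W^{(\alpha)}(K))$. Once we are in the affine situation, the componentwise application of Fact~\ref{fact:def-func} and the characteristic-zero argument for generic \'etaleness of the fiber product $X \to V$ are comparatively routine.
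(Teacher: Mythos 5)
Your proposal does not prove the statement it was asked to prove. The statement is Fact~\ref{fact:def-func} itself --- the existence, for a definable \emph{scalar-valued} $f\colon O\to K$, of the decomposition $O\cap U(K)=\bigcup_i O_i$ together with the irreducible polynomials $h_i$ satisfying (1)--(4). Your very first step is ``apply Fact~\ref{fact:def-func} componentwise to $f=(f_1,\ldots,f_n)$,'' so the argument is circular: it assumes the target statement as its main ingredient. What you have actually written is a proof of the downstream result, Theorem~\ref{thm:ez-function} (that definable maps into a general variety $W$ are generically Nash), and indeed your conclusion --- producing Nash data $(X^{\mathrm{rel}},e,h,\Gamma)$ for $f|_{O_{\vec\jmath}}$ --- is the conclusion of that theorem, not of Fact~\ref{fact:def-func}, whose conclusion says nothing about Nash data and instead asserts the algebraicity, smoothness, graph-openness, and continuity properties (1)--(4). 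Note also that the paper does not prove Fact~\ref{fact:def-func}; it imports it as \cite[Prop.~10.3]{secondpaper}.

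A genuine proof of Fact~\ref{fact:def-func} has to engage the \'ez hypothesis directly with the \emph{graph} of $f$: the graph is a definable subset of $V(K)\times K=(V\times\A^1)(K)$, hence a finite union of definable \'etale-open subsets of Zariski closed subsets of $V\times\A^1$; one discards the pieces whose Zariski closures fail to dominate $V$ (absorbing their images into $V\setminus U$), passes to irreducible components to get the irreducible $h_i$ with $h_i(a,f(a))=0$, and then shrinks $U$ further to arrange smoothness of the hypersurfaces $W_i$ and to upgrade ``\'etale-open in a closed set'' to ``\'etale-open in $W_i(K)$,'' with continuity of $f$ on $O_i$ following from the graph being locally homeomorphic to its projection. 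None of this machinery appears in your write-up. As a secondary remark: read as a proof of Theorem~\ref{thm:ez-function}, your argument is broadly sound but takes a more laborious route than the paper, which reduces to $W=\A^1$ via Fact~\ref{fact:nash basic}(2) and thereby avoids your fiber-product construction and the attendant generic-\'etaleness bookkeeping; your reduction of the target to an affine chart via \'ez decompositions of the preimages $f^{-1}(W^{(\alpha)}(K))$ is also heavier than the paper's device of replacing $W$ by the Zariski closure of $f(O)$ and using Fact~\ref{fact:z-dense}.
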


Fact~\ref{fact:sard} fails in positive characteristic (consider the Frobenius $\A^1 \to \A^1$).

\begin{fact}\label{fact:sard}
Suppose that $K$ is characteristic zero and $V \to W$ is quasi-finite.
Then there is a dense open subvariety $U$ of $W$ such that the pullback of $V \to W$ by $U \to W$ is \'etale.
\end{fact}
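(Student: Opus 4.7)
The plan is to combine characteristic-zero separability with a dimension count using quasi-finiteness. First I would reduce to the case that $V$ and $W$ are both integral: decompose $V$ into its irreducible components, replace $W$ by the irreducible components containing the closures of the images of the components of $V$, intersect the dense opens obtained for each component, and further shrink to avoid the (lower-dimensional) images of the pairwise intersections of components of $V$. If $f \colon V \to W$ is not dominant on some component, take $U = W \setminus \overline{f(V)}$ for that piece, so the pullback is empty and trivially \'etale. Otherwise assume $V, W$ integral and $f$ dominant and quasi-finite.

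Because $f$ is dominant and quasi-finite and $W$ is integral, the generic fiber is a finite-dimensional $K(W)$-scheme of dimension $0$; integrality of $V$ identifies it with $\Spec K(V)$ where $K(V)/K(W)$ is a finite field extension, and the fiber-dimension formula then gives $\dim V = \dim W$. Since $\Chara(K) = 0$, the extension $K(V)/K(W)$ is separable, so $\Spec K(V) \to \Spec K(W)$ is \'etale; equivalently $f$ is \'etale at the generic point of $V$. As the \'etale locus of a finite-type morphism is Zariski open, there is a dense open $V^0 \subseteq V$ on which $f$ is \'etale.

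Set $Z = V \setminus V^0$, a proper closed subset, so $\dim Z < \dim V = \dim W$. The restriction $f|_Z$ is still quasi-finite, hence by Chevalley $f(Z)$ is constructible and $\dim \overline{f(Z)} \le \dim Z < \dim W$, so $\overline{f(Z)}$ is a proper closed subvariety of $W$. Taking $U = W \setminus \overline{f(Z)}$, we have $f^{-1}(U) \subseteq V^0$, and therefore $V \times_W U \to U$ (which equals $f^{-1}(U) \to U$) is \'etale. The only place $\Chara(K) = 0$ is used is in the separability of $K(V)/K(W)$; this is the main obstacle, and is precisely what fails for the Frobenius example in positive characteristic.
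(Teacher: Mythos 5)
Your proof is correct and takes essentially the same route as the paper's: first establish that $f$ is \'etale on a dense open subset of $V$, then use quasi-finiteness to discard the closure of the image of the complement, which has dimension $< \dim W$. The only difference is that you derive the generic \'etaleness directly from separability of $K(V)/K(W)$ in characteristic zero, where the paper cites this statement from Mumford--Oda (Cor.~5.4.2).
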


\begin{proof}
By \cite[Cor.~5.4.2]{Mumford-Oda} there is a dense open subvariety $U^*$ of $V$ such that $U^* \to W$ is \'etale.
As $V \to W$ is quasi-finite the image of $V \setminus U^*$ is not Zariski dense in $W$.
Let $U$ be the complement of the Zariski closure of the image of $V \setminus U^*$.
\end{proof}

We now prove Theorem~\ref{thm:ez-function}.

\begin{proof}
After possibly replacing $V$ with a dense affine open subvariety we suppose that $V$ is a subvariety of $\A^m$.
After possibly replacing $W$ with the Zariski closure of $f(O)$ we may suppose that $f(O)$ is Zariski dense in $W$.
Fix a dense affine open subvariety $W^*$ of $W$, so $f^{-1}(W^*)$ is Zariski dense in $V$.
Hence $f^{-1}(W^*(K)) \cap O$ is Zariski dense in $O$ by Fact~\ref{fact:z-dense}.
So we may suppose that $W$ is affine.
We may therefore also suppose that $W$ is a subvariety of $\A^n$ and thereby reduce to the case when $W = \A^n$.
An application of Fact~\ref{fact:nash basic}(2) further reduces to the case when $W = \A^1$.

\medskip
Let $U$, $O_1,\ldots,O_k$, and $h_1,\ldots,h_k$ be as in Fact~\ref{fact:def-func}.
It suffices to show that the restriction of $f$ to each $O_i$ is generically Nash.
Hence we may suppose that $k = 1$, set $O = O_1$, and set $h = h_1$.
Let $Y$ be the subvariety of $U \times \A^1$ given by $h = 0$ and let $\uppi\colon Y \to U$ be the projection.
Then $\uppi$ is quasi-finite by (1).
Applying Fact~\ref{fact:sard} let $U^*$ be a dense open subvariety of $U$ such that the pullback $\uppi^*$ of $\uppi$ by $U^* \to U$ is \'etale.
Let $O^* = U^*(K)$.
We show that $f$ is Nash on $O^*$.
Let $G$ be the graph of $f|_{O^*}$.
By (3) above $G$ is an \'etale-open subset of $W(K)$.
The restriction of $\uppi^*$ to $G$ is a injection and is hence a homeomorphism by Fact~\ref{fact:etale-homeo}.
Let $\rho \colon W \to \A^1$ be the restriction of the projection $V \times \A^1 \to \A^1$ to $W$.
Then $(W, \uppi^*, \rho, G)$ is Nash data for $f|_{O^*}$, hence $f|_{O^*}$ is a Nash map.
\end{proof}

\section{Krasner's lemma for large fields}\label{section:kras}
Given a morphism $V \to W$ we let $V_p$ be the scheme-theoretic fiber of $V$ over $p\in W$, this is a scheme of finite type over $K$.
If $V \to W$ is \'etale and $p$ is a $K$-point then $V_p$ is a finite disjoint union of spectra of finite separable extensions of $K$.

\begin{proposition}\label{prop:krasner}
Suppose that $c\colon V \to W$ is finite and \'etale.
Then the isomorphism type of $V_p$ is locally constant in the \'etale-open topology on $W(K)$.
\end{proposition}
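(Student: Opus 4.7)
If $K$ is not large the \'etale-open topology on $W(K)$ is discrete and the claim is vacuous; if $K$ is separably closed every finite \'etale $K$-scheme of degree $n$ is a disjoint union of $n$ copies of $\Spec K$, so the isomorphism type of $V_q$ is globally constant in $q$. I therefore assume $K$ is large and not separably closed and fix $p \in W(K)$. After restricting $W$ to an affine open neighborhood of $p$ (the conclusion being local), both $V$ and $W$ are affine and hence quasi-projective, so Fact~\ref{fact:hd} is available.

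Write $V_p = \bigsqcup_i \Spec L_i$ and let $L/K$ be a finite Galois extension containing every $L_i$, with $G = \Gal(L/K)$. Then $V_p \otimes_K L$ splits completely, giving exactly $n = \deg(c)$ distinct $L$-points $r_1,\ldots,r_n$ of $V_L$ above $p_L \in W_L(L)$, and the isomorphism type of $V_p$ as a $K$-scheme is determined by the $G$-set structure on $\{r_1,\ldots,r_n\}$. Since $L$ is also large and not separably closed, applying Fact~\ref{fact:etale-homeo} to the \'etale morphism $c_L\colon V_L \to W_L$ at each $r_j$, together with Fact~\ref{fact:hd} to separate the $r_j$ by pairwise disjoint neighborhoods, yields (after intersection) an \'etale-open neighborhood $P$ of $p_L$ in $W_L(L)$ and continuous sections $s_j\colon P \to V(L)$ with $s_j(p_L) = r_j$ and pairwise disjoint images. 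Degree considerations force $\{s_1(q),\ldots,s_n(q)\}$ to exhaust the fiber above every $q \in P$; in particular, for every $q \in P \cap W(K)$ the fiber $V_q$ is split by $L$, and its $K$-scheme isomorphism type is determined by the induced $G$-action on $\{s_j(q)\}$.

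The heart of the argument is to show this $G$-action is locally constant in $q$. For fixed $\sigma \in G$ and $j$, the twisted section $q \mapsto \sigma s_j(q)$ is continuous on $P$ into $V(L)$, since $\sigma$ is induced by a $K$-scheme automorphism of $V_L$ (equivalently, by a $K$-automorphism of the Weil restriction $R_{L/K} V_L$). For $q \in P \cap W(K)$ one has $\sigma q = q$, so $\sigma s_j(q)$ coincides with exactly one of $s_1(q),\ldots,s_n(q)$. Hausdorff-ness of the \'etale-open topology on $V(L)$ (Fact~\ref{fact:hd}) makes each of the $n$ loci $\{q \in P\cap W(K) : \sigma s_j(q) = s_i(q)\}$ closed, and since they partition $P\cap W(K)$ each is clopen. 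Taking a common refinement over finitely many $(\sigma, j)$ produces an \'etale-open neighborhood $N$ of $p_L$ in $W_L(L)$ on which the labelled $G$-set $\{s_j(q)\}$ is $G$-isomorphic to $\{r_j\}$, so that $V_q \cong V_p$ as $K$-schemes for all $q \in N \cap W(K)$.

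The main obstacle is the final bookkeeping: $N$ lives in $W_L(L)$, and I need $N \cap W(K)$ to be \'etale-open in $W(K)$. This reduces to continuity of the inclusion $W(K) \hookrightarrow W_L(L)$ in the respective \'etale-open topologies, which I handle via Weil restriction: $W \to R_{L/K} W_L$ is a closed immersion of $K$-varieties, so $W(K) \to R_{L/K}(W_L)(K)$ is a topological closed embedding in the \'etale-open topology; and since Weil restriction sends \'etale to \'etale, the $K$-\'etale-open topology on $R_{L/K}(W_L)(K) = W_L(L)$ refines the $L$-\'etale-open topology. Combining these gives the needed continuity, and this topological comparison across ground fields is where most of the care is required.
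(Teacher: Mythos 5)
Your overall strategy coincides with the paper's: pass to a finite Galois extension $L$ splitting the fibre $V_p$, use the local homeomorphism property of \'etale maps over $L$ together with Hausdorffness of the \'etale-open topology to produce pairwise disjoint local sections through the points of the split fibre, observe that the induced $\Gal(L/K)$-action on the labelled fibre is locally constant by a clopen-partition argument, and descend to $W(K)$. Your Weil-restriction argument for the continuity of $W(K)\hookrightarrow W(L)$ between the two \'etale-open topologies is a legitimate substitute for the paper's direct citation of Fact~\ref{fact:insep}, and the clopen bookkeeping for the $G$-action is sound.

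There is, however, a genuine gap at the assertion that ``$L$ is also large and not separably closed.'' Largeness does pass to algebraic extensions, but non-separable-closedness does not: by Artin--Schreier, if $K$ is real closed and $V_p$ does not already split over $K$, then the splitting field is $L=K(\sqrt{-1})$, which is algebraically closed. In that case the \'etale-open topology on $V(L)$ is the Zariski topology, so Fact~\ref{fact:etale-homeo} is unavailable (its hypothesis that the field not be separably closed is essential) and Fact~\ref{fact:hd} fails; the construction of disjoint neighbourhoods and continuous sections $s_j$ over $L$ collapses entirely. This is precisely why the paper first disposes of the real closed case separately --- over $\R$ the statement is a special case of Poonen's result for the analytic topology, and the general real closed case follows by elementary transfer --- and only then chooses $L$, noting that $L$ is not separably closed \emph{because} $K$ is neither separably closed nor real closed. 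You need to add that case split (or supply an independent argument for real closed $K$) for the proof to be complete.
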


Poonen \cite[Prop.~3.5.74]{poonen-qpoints} gives a proof of this for the analytic topology over a local field $K$.
Our proof follows his.
(The \'etale-open topology over a local field other than $\C$ agrees with the analytic topology.)

\medskip
Fact~\ref{fact:insep} is \cite[Thm.~5.8, Prop.~5.10]{firstpaper}.

\begin{fact}\label{fact:insep}
Suppose that $L/K$ is an algebraic field extension, fix a $K$-variety $V$, and let $V(K) \to V(L)$ be the inclusion.
Then $V(K) \to V(L)$ is a continuous map from the \'etale-open topology on $V(K)$ to the \'etale-open topology on $V(L)$ and if $L/K$ is purely inseparable then $V(K) \to V(L)$ is a homeomorphic embedding.
\end{fact}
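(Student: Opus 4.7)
The plan is to prove the two assertions in turn: continuity of $V(K)\to V(L)$ for any algebraic $L/K$, and the stronger homeomorphic-embedding claim when $L/K$ is purely inseparable. Continuity amounts to showing that for any basic \'etale-open subset $U=f(W(L))\subseteq V(L)$ (with $f\colon W\to V_L$ \'etale) and any $p\in V(K)\cap U$, some \'etale-open neighborhood of $p$ in $V(K)$ lies inside $V(K)\cap U$. The purely inseparable part then reduces to showing that every basic \'etale-open of $V(K)$ equals $V(K)\cap \tilde U$ for some \'etale-open $\tilde U\subseteq V(L)$.

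For continuity I would execute a two-step descent. Pick $q\in W(L)$ above $p$. By a standard spreading-out argument (using that $f$ is of finite presentation and $q$ has residue field algebraic over $K$), there exist a finite subextension $L'/K$, an \'etale morphism $f'\colon W'\to V_{L'}$ with $f=f'\times_{L'}L$, and a point $q'\in W'(L')$ with $f'(q')=p\in V(K)\subseteq V(L')$. Let $L_0\subseteq L'$ be the separable closure of $K$ in $L'$, so $L'/L_0$ is purely inseparable. Topological invariance of the \'etale site applied to the universal homeomorphism $\Spec L'\to\Spec L_0$ yields an \'etale $f_0\colon W_0\to V_{L_0}$ with $f'=f_0\times_{L_0}L'$. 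Each component of the fiber of $f_0$ over $p\in V(L_0)$ is the spectrum of a finite separable extension of $L_0$; any such extension admitting an $L_0$-embedding into the purely inseparable $L'/L_0$ must equal $L_0$, so $q'$ descends to some $q_0\in W_0(L_0)$ over $p$.

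Having reduced to the case of a finite \emph{separable} extension $L_0/K$, I would pass to Weil restrictions. Set $X:=\mathrm{Res}_{L_0/K}W_0$ and $Y:=\mathrm{Res}_{L_0/K}V_{L_0}$, so $X(K)=W_0(L_0)$ and $Y(K)=V(L_0)$. Because Weil restriction along a finite \'etale base takes \'etale morphisms to \'etale morphisms (via the adjunction between base change and Weil restriction together with a formal smoothness computation), the induced map $X\to Y$ is an \'etale morphism of $K$-varieties. The unit of the adjunction gives a canonical morphism $V\to Y$, and the fiber product $Z:=X\times_YV$ is \'etale over $V$ by base change, hence a $K$-variety \'etale over $V$. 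Its $K$-points are pairs $(r,v)\in W_0(L_0)\times V(K)$ with $f_0(r)=v$ in $V(L_0)$; the pair $(q_0,p)$ lies above $p$, and for any such pair, the base change of $r$ along $L_0\hookrightarrow L$ gives an element of $W(L)$ mapping to $v$. Thus the image of $Z(K)\to V(K)$, a basic \'etale-open subset of $V(K)$ by definition, is a neighborhood of $p$ contained in $V(K)\cap U$, proving continuity.

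For the purely inseparable case, take any basic \'etale-open $f(W(K))\subseteq V(K)$ with $f\colon W\to V$ \'etale of $K$-varieties, and set $\tilde U:=f_L(W_L(L))$. I claim $V(K)\cap f_L(W_L(L))=f(W(K))$; only $\subseteq$ requires argument. Given $p\in V(K)$, the fiber $W_p$ is \'etale over $\Spec K$, hence isomorphic to $\bigsqcup_i\Spec K_i$ with each $K_i/K$ finite separable. Any $q\in W(L)$ above $p$ corresponds to a $K$-algebra map $K_i\to L$ for some $i$; its image is a separable $K$-subalgebra of $L$, which must equal $K$ by pure inseparability of $L/K$, forcing $K_i=K$ and $q\in W(K)$. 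Combined with continuity this identifies the subspace topology on $V(K)$ inherited from $V(L)$ with the \'etale-open topology. The main obstacle is the Weil-restriction step in the continuity argument: one must verify that $\mathrm{Res}_{L_0/K}$ preserves \'etaleness and carefully control the image of $Z(K)$ in $V(K)$, and it is precisely here that the separable reduction via topological invariance is indispensable. Once continuity is in hand, the purely inseparable case is a brief direct fiber calculation.
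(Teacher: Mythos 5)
This statement is not proved in the paper: it is quoted as Fact~\ref{fact:insep} with a citation to \cite{firstpaper}, so there is no internal argument to compare against. Your proof is correct and follows essentially the same strategy as the cited source: spread out to a finite subextension, use topological invariance of the \'etale site to descend to the separable part $L_0$, apply Weil restriction along the finite \'etale extension $L_0/K$ to manufacture an \'etale $V$-scheme $Z$ whose image of $K$-points is the required neighborhood, and settle the purely inseparable direction by observing that the fibers of an \'etale morphism over a $K$-point are spectra of finite separable extensions and therefore acquire no new points over a purely inseparable extension. The one step worth tightening is the existence of $\mathrm{Res}_{L_0/K}W_0$ as a scheme, which requires quasi-projectivity (or that finite sets of points lie in affine opens); since continuity is a local statement, this is repaired by first replacing $V$ by an affine open neighborhood of $p$ and $W_0$ by an affine open neighborhood of $q_0$, which only shrinks the neighborhood produced.
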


We now prove Proposition~\ref{prop:krasner}.

\begin{proof}
We may suppose that $W$ is connected, hence $e$ has constant degree $d$.
If $K$ is separably closed then each $V_p$ is a disjoint union of $d$ copies of $\Spec K$, so we may suppose that $K$ is not separably closed.
The case when $K = \R$ follows is a special case of \cite[Prop.~3.5.74]{poonen-qpoints} and the case when $K$ is real closed then follows by elementary transfer.
We suppose that $K$ is not real closed.
Fix $p \in W(K)$.
Let $W'$ be an affine open subvariety of $W$ containing $p$.
After possibly replacing $W$ with  $W'$ and $V \to W$ with the pullback of $V \to W$ by $W' \to W$ we may suppose that $W$ is affine.
As $e$ is finite, $e$ is affine, hence $V$ is affine.
We show there is an \'etale-open neighborhood $O \subseteq W(K)$ of $p$ such that $V_q$ is isomorphic to $V_p$ for all $q \in O$.
Let $L/K$ be a finite Galois extension such that $V_p$ splits into points $p_1,\ldots,p_d \in V(L)$.
Then $L$ is not separably closed as $K$ is neither separably nor real closed.
Let $e_L$ be the induced map $V(L) \to W(L)$, so $e^{-1}_L(p) = V_p(L) = \{p_1,\ldots,p_d\}$.
Now $e_L$ is a local homeomorphism as $L$ is not separably closed.
Let $O \subseteq W(L)$ be an \'etale-open neighborhood of $p$ and $P_i \subseteq V(L)$ be an \'etale-open neighborhood of $p_i$ such that $e_L$ gives a homeomorphism $P_i \to O$ for each $i = 1,\ldots,d$.
Now $\Gal(L/K)$ acts on $V(L)$ and by \cite[Cor.~2.7]{secondpaper} this action is by homeomorphisms with respect to the \'etale-open topology.
After possibly shrinking these neighborhoods we may suppose that $\sigma(P_i) = P_j$ when $\sigma \in \Gal(L/K)$ is such that $\sigma(p_i) = p_j$.
As $V$ is affine the \'etale-open topology on $V(L)$ is hausdorff by Fact~\ref{fact:hd} so we may also suppose that the $P_i$ are pairwise disjoint.
Let $P$ be the union of the $P_i$ and let $h$ be the natural map $P \to O \times e^{-1}_L(p)$.
The action of $\Gal(L/K)$ on $e^{-1}_L(p)$ induces a natural action on $O \times e^{-1}_L(p)$, and $h$ is then $\Gal(L/K)$-equivariant.
By Fact~\ref{fact:insep} $O \cap V(K)$ is an \'etale-open subset of $V(K)$.
Fix $q \in O \cap V(K)$.
Then $e^{-1}_L(p)$ and $e^{-1}_L(q)$ are isomorphic as $\Gal(L/K)$-sets.
Now $V_q$ also splits over $L$ as $|e^{-1}_L(q)| = d$ and $V_q$ has degree $d$.
Hence the isomorphism $e^{-1}_L(p) \to e^{-1}_L(q)$ of $\Gal(L/K)$-sets gives an isomorphism $V_p \to V_q$ of $K$-schemes.
\end{proof}

Given $\alpha = (\alpha_0,\ldots,\alpha_d) \in K^{d + 1}$ let $p_\alpha \in K[x]$ be $p_\alpha(x) = x^{d + 1} + \alpha_d x^d + \cdots + \alpha_1 x + \alpha_0$.

\begin{proposition}\label{cor:krasner}
Fix $d \ge 1$ and $\alpha \in K^{d +1}$ and suppose that $p_\alpha$ is separable and irreducible.
Then there is an \'etale-open neighborhood $O$ of $\alpha$ such that if $\beta \in O$ then $p_\beta$ is separable irreducible and $K[x]/(p_\beta)$ is isomorphic to $K[x]/(p_\alpha)$.    
\end{proposition}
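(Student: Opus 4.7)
The plan is to cast this as a direct application of Proposition~\ref{prop:krasner} to the universal family of monic polynomials of degree $d+1$. Set $W = \A^{d+1}$ with coordinates $(\beta_0,\ldots,\beta_d)$, so $W(K) = K^{d+1}$ parametrizes tuples $\beta$. Let $V \subseteq W \times \A^1$ be the closed subvariety cut out by the universal polynomial $t^{d+1} + \beta_d t^d + \cdots + \beta_0$, and let $c \colon V \to W$ be the projection. Since this polynomial is monic in $t$, $c$ is finite and flat of rank $d+1$, and $V$ is irreducible (hence a $K$-variety) because the defining polynomial is linear in $\beta_0$.

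Next I would cut down to the separable locus. Let $W' \subseteq W$ be the Zariski open subvariety where the discriminant of the universal polynomial is nonzero, and let $V' = c^{-1}(W')$. A point $\beta \in W(K)$ lies in $W'(K)$ precisely when $p_\beta$ is separable, and over $W'$ the finite flat morphism $c$ has étale fibers, hence $c' \colon V' \to W'$ is finite étale. In particular $\alpha \in W'(K)$ since $p_\alpha$ is separable, and the scheme-theoretic fiber $V'_\beta$ over $\beta \in W'(K)$ is exactly $\Spec(K[x]/(p_\beta))$.

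Now I would invoke Proposition~\ref{prop:krasner} for $c' \colon V' \to W'$: the isomorphism type of $V'_\beta$ as a $K$-scheme is locally constant on $W'(K)$ in the étale-open topology. Picking an étale-open neighborhood of $\alpha$ in $W'(K)$ on which this isomorphism type is constant, and using that $W' \hookrightarrow W$ is a scheme-theoretic open immersion so that $W'(K)$ is étale-open in $W(K) = K^{d+1}$, I obtain an étale-open neighborhood $O$ of $\alpha$ in $K^{d+1}$ with $\Spec(K[x]/(p_\beta)) \cong \Spec(K[x]/(p_\alpha))$ for every $\beta \in O$.

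Finally I would deduce the three conclusions: separability of $p_\beta$ is immediate from $O \subseteq W'(K)$; the $K$-algebra isomorphism $K[x]/(p_\beta) \cong K[x]/(p_\alpha)$ is the content of the scheme isomorphism; and since $p_\alpha$ is irreducible, $K[x]/(p_\alpha)$ is a field, so $K[x]/(p_\beta)$ is a field and therefore $p_\beta$ is irreducible as well. No serious obstacle is expected here, since Proposition~\ref{prop:krasner} already encodes the main analytic content; the only thing to check carefully is that the naive universal family $V \to W$ really is finite étale on the discriminant locus, which is a standard Jacobian calculation.
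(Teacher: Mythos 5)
Your proof is correct, and its skeleton is the same as the paper's: form the universal family of monic degree-$(d+1)$ polynomials over $\A^{d+1}$, restrict to the locus over which the projection is finite \'etale, and apply Proposition~\ref{prop:krasner}. The one genuine difference is how you guarantee that $p_\beta$ remains separable and irreducible on the neighborhood. The paper first proves that the set $S$ of $\beta$ with $p_\beta$ separable irreducible is \'etale-open, which costs two auxiliary inputs: that multiplication of monic polynomials is a finite morphism (Fact~\ref{fact:poly mult}) and that finite morphisms over perfect fields have \'etale-closed image (Fact~\ref{fact:fc}), applied over the perfect closure via Fact~\ref{fact:insep}; it then shrinks the neighborhood supplied by Proposition~\ref{prop:krasner} to lie inside $S$. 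You instead get separability for free from working over the discriminant-nonvanishing locus, and deduce irreducibility a posteriori: the constancy of the fiber gives a $K$-algebra isomorphism $K[x]/(p_\beta)\cong K[x]/(p_\alpha)$, and since the right-hand side is a field so is the left-hand side, whence $p_\beta$ is irreducible. This is a genuine streamlining, as Facts~\ref{fact:fc} and~\ref{fact:poly mult} are not needed at all in your version. The remaining points you flag as routine (irreducibility, hence reducedness, of the universal hypersurface because its equation is linear in $\beta_0$; finite flat with fiberwise separable fibers over the discriminant locus implies finite \'etale) are indeed standard and check out.
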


Proposition~\ref{cor:krasner} was first proven in \cite{with-anand}.
We give a more conceptually satisfying proof.

\begin{fact}\label{fact:fc}
If $K$ is perfect and $V \to W$ is a finite morphism then the image of $V(K) \to W(K)$ is a closed subset of $W(K)$ in the \'etale-open topology.
\end{fact}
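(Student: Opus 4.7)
The plan is to reduce to a manageable inductive setup and then combine Proposition~\ref{prop:krasner} with a Galois-descent argument for the gluing step. First I dispose of the easy cases: if $K$ is not large then the \'etale-open topology on $W(K)$ is discrete and the conclusion is immediate, while if $K$ is separably closed (hence algebraically closed, since $K$ is perfect) then the \'etale-open topology on $W(K)$ is the Zariski topology, $V(K) \to f(V)(K)$ is surjective by the Nullstellensatz, and $f(V) \subseteq W$ is Zariski-closed by properness of finite morphisms. So I assume $K$ is large and not separably closed. Since closedness in $W(K)$ is Zariski-local on $W$, I reduce to $W$ affine, and by decomposing $V$ into irreducible components I further assume $V$ and $W$ are irreducible.

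Using perfectness, I factor $f$ as $V \to V^{\mathrm{sep}} \to W$, where $V \to V^{\mathrm{sep}}$ is the maximal purely inseparable quotient (a universal homeomorphism) and $V^{\mathrm{sep}} \to W$ is generically \'etale. Since $K$ is perfect, any purely inseparable finite extension of $K$ is trivial, and so $V(K) \to V^{\mathrm{sep}}(K)$ is a bijection; this lets me replace $V$ by $V^{\mathrm{sep}}$ and assume $f$ is generically \'etale. I then induct on $\dim W$, with the base case $\dim W = 0$ trivial. Let $U \subseteq W$ be the dense open \'etale locus of $f$ and $Z = W \setminus U$. The pullback $V \times_W Z \to Z$ is finite, so by the inductive hypothesis its image in $Z(K)$ is closed in $Z(K)$, hence in $W(K)$. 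Moreover, by Proposition~\ref{prop:krasner} the image of $(V \times_W U)(K)$ in $U(K)$ is clopen in $U(K)$, since the isomorphism type of $V_q$ is locally constant on the \'etale locus.

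The main obstacle is the gluing step: I must verify that no $q \in Z(K)$ with $V_q(K) = \varnothing$ lies in the \'etale-open closure in $W(K)$ of $f((V \times_W U)(K)) \subseteq U(K)$. To handle this I pass to a finite Galois extension $L/K$ containing the residue fields of all closed points of $V_q$; then the reduced base change $(V_q \times_K L)_{\mathrm{red}}$ splits into $L$-points $p_1, \ldots, p_d$ on which $\Gal(L/K)$ acts, with no fixed points since $V_q(K) = \varnothing$. Using Fact~\ref{fact:insep} to compare the \'etale-open topologies on $V(K)$ and $V_L(L)$, applying Proposition~\ref{prop:krasner} over $L$ to the \'etale locus of $V_L \to W_L$ near each $p_i$, and invoking the Krasner-type content of Proposition~\ref{cor:krasner} to control $L$-rational lifts of $K$-rational points, I plan to argue that for every $q' \in U(K)$ sufficiently close to $q$, any $K$-rational point of $V_{q'}$ would produce a $\Gal(L/K)$-fixed $L$-point of $V_L$ clustering near some $p_i$, contradicting the fixed-point-freeness of the Galois action. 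This Galois-cohomological control of fiber degenerations across the boundary between $U$ and $Z$ is the delicate step.
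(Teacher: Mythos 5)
This statement is not proved in the paper at all: it is quoted as Theorem~C of \cite{large->henselian}, so you are in effect reproving a substantial external theorem rather than an internal lemma. Your reductions (the non-large and separably closed cases, localizing to $W$ affine, passing to irreducible components, replacing $V$ by a generically \'etale quotient $V^{\mathrm{sep}}$, and inducting on $\dim W$ with $U$ the \'etale locus and $Z = W\setminus U$) are sensible, and the use of Proposition~\ref{prop:krasner} to see that the image is clopen inside $U(K)$ is correct. But there is a genuine gap exactly at the step you flag as delicate, and that step is not a technicality --- it is the entire content of the theorem. You must show that a point $q\in Z(K)$ with $V_q(K)=\varnothing$ has an \'etale-open neighborhood in $W(K)$ missing $f\bigl((V\times_W U)(K)\bigr)$. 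Your plan splits $V_q$ over a finite Galois extension $L$ into points $p_1,\dots,p_d$ carrying a fixed-point-free $\Gal(L/K)$-action and then asserts that a $K$-point of a nearby fiber $V_{q'}$ would produce a Galois-fixed $L$-point ``clustering near some $p_i$.'' Nothing available justifies this clustering: Proposition~\ref{prop:krasner} applies only to finite \emph{\'etale} morphisms, hence only over $U$, and says nothing about how the fibers $V_{q'}$ for $q'\in U(K)$ degenerate onto the non-\'etale fiber $V_q$. The assertion that every $L$-point of $V_{q'}$ lies \'etale-open-close to some $L$-point of $V_q$ is a properness statement for finite morphisms in the \'etale-open topology, i.e., a statement of essentially the same strength as the fact being proved. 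Over $\R$ or $\Q_p$ it follows from compactness of fibers in the analytic topology; no such compactness argument is available here, and your sketch supplies no substitute.

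Two further problems. First, a circularity: you invoke ``the Krasner-type content of Proposition~\ref{cor:krasner},'' but in this paper Proposition~\ref{cor:krasner} is itself deduced from Fact~\ref{fact:fc} (which is used there to show that the locus of separable irreducible polynomials is \'etale-open), so it cannot appear in a proof of Fact~\ref{fact:fc}. Second, a smaller repairable issue: the claim that $V(K)\to V^{\mathrm{sep}}(K)$ is a bijection needs more care, since the purely inseparable factor $V\to V^{\mathrm{sep}}$ is only \emph{generically} a universal homeomorphism, and a finite morphism with purely inseparable function-field extension can fail to be universally injective over special points; this can be fixed by further stratification and induction, unlike the main gap above.
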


Fact~\ref{fact:fc} is \cite[Thm.~C]{large->henselian}.

\begin{fact}\label{fact:poly mult}
The morphism $\A^m \times \A^n \to \A^{m + n}$ given by $(p_\alpha, p_\beta) \mapsto p_\alpha p_\beta$ is finite.
\end{fact}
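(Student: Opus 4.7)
The plan is to check finiteness at the level of coordinate rings by showing the induced map is integral, since it is manifestly of finite type. Identify $\A^m \times \A^n$ with $\Spec R$ where $R = K[\alpha_0,\ldots,\alpha_{m-1},\beta_0,\ldots,\beta_{n-1}]$, and $\A^{m+n}$ with $\Spec S$ where $S = K[a_0,\ldots,a_{m+n-1}]$. Polynomial multiplication induces a ring homomorphism $S \to R$ sending $a_k$ to the coefficient of $x^k$ in $p_\alpha(x)\, p_\beta(x)$. Since $R$ is generated as an $S$-algebra by $\alpha_0,\ldots,\alpha_{m-1},\beta_0,\ldots,\beta_{n-1}$, it suffices to show that each of these generators is integral over $S$.

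To do this, I would pass to an algebraic closure $\Omega$ of $\Frac(R)$ and factor $p_\alpha(x)\, p_\beta(x) = \prod_{i=1}^{m+n}(x - r_i)$, grouping the factorization so that $r_1,\ldots,r_m$ are the roots of $p_\alpha$ and $r_{m+1},\ldots,r_{m+n}$ are the roots of $p_\beta$ in $\Omega$. Each $r_i$ satisfies the monic polynomial $p_\alpha(x)\, p_\beta(x) \in S[x]$, so each $r_i$ is integral over $S$. Each $\alpha_i$ is, up to sign, an elementary symmetric polynomial in $r_1,\ldots,r_m$, and each $\beta_j$ one in $r_{m+1},\ldots,r_{m+n}$, so all of the $\alpha_i, \beta_j$ lie in the integral closure of $S$ in $\Omega$. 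The resulting monic dependence relation for $\alpha_i$ over $S$ lies in $S[T]$ and hence remains valid in $R$, witnessing that $\alpha_i \in R$ is integral over $S$; the same reasoning applies to each $\beta_j$.

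Therefore $R$ is a finitely generated integral $S$-algebra, so a finite $S$-module, and consequently $\A^m \times \A^n \to \A^{m+n}$ is a finite morphism. There is no serious obstacle in this argument; the only care required is to note that integrality verified through the auxiliary roots in $\Omega$ transfers back to $R$, which is automatic because the monic dependence relations have coefficients in $S \subseteq R$.
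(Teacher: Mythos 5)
Your argument is correct: passing to a splitting field, noting each root of $p_\alpha p_\beta$ is integral over $S$ because it satisfies that monic polynomial, and recovering the $\alpha_i,\beta_j$ as elementary symmetric functions of the roots is the standard way to see that the multiplication map on monic polynomials is finite. The paper does not prove this fact itself but defers to \cite[Fact~6.1]{large->henselian}, and your proof is the expected one; the only point worth stating explicitly is the (routine) observation that the subring of $\Omega$ of elements integral over $S$ is closed under addition and multiplication, which is what lets you pass from the $r_i$ to their symmetric functions.
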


Fact~\ref{fact:poly mult} is proven in \cite[Fact~6.1]{large->henselian}, and has probably been proven in a number of other places as well.
We now prove Proposition~\ref{prop:krasner}.

\begin{proof}
Let $S$ be the set of $\beta \in K^{d + 1}$ such that $p_\beta$ is separable and irreducible.
Let $L/K$ be the maximal purely inseparable extension of $K$, equivalently $L$ is the smallest perfect extension of $K$.
Facts~\ref{fact:poly mult}, \ref{fact:fc}, and \ref{fact:insep} together show that the set of $\beta \in K^{d + 1}$ such that $p_\beta$ factors in $L[x]$ is closed.
Recall that a polynomial in $K[x]$ does not factor in $L[x]$ if and only if it is separable and irreducible.
Hence $S$ is \'etale-open.
Now let $V$ be the closed subvariety of $\A^{d + 1} \times \A^1 = \Spec K[y_0,\ldots,y_d,x]$ given by $x^{d + 1} + y_d x^d + \cdots + y_1 x + y_0 = 0$ and let  $\uppi \colon V \to \A^{d + 1}$ be the projection.
Then $\uppi$ is finite as $K[y_0,\ldots,y_d,x]/(x^{d + 1} + y_d x^d + \cdots + y_1 x + y_0)$ is a finite extension of $K[y_0,\ldots,y_d]$.
Let $W$ be the open subvariety of $V$ given by
$$ \frac{d}{dx}[x^{d + 1} + y_d x^d + \cdots + y_1 x + y_0] \ne 0.$$
Then the restriction of $\uppi$ to $W$ is \'etale.
Now $V \setminus W$ is a closed subvariety of $V$, hence $\uppi(V \setminus W)$ is a closed subvariety of $\A^{d+1}$ as $\uppi$ is finite.
Let $U$ be the complementary subvariety of $\uppi(V \setminus W)$, this is an open subvariety of $\A^{d+1}$.
Note that $S \subseteq U(K)$.
Let $\uppi^* \colon V^* \to U$ be the pullback of $\uppi$ along $U \to \A^{d + 1}$, so $\uppi^*$ is finite and \'etale.
Hence by Proposition~\ref{prop:krasner} there is an \'etale-open neighborhood $\alpha \in O \subseteq S$ such that the isomorphism type of the scheme-theoretic fiber of $\uppi^*$ is constant on $O$.
Finally, note that if $\beta \in O$ then the scheme-theoretic fiber of $\uppi^*$ above $\beta$ is a disjoint union of $d$ copies of $\Spec K[x]/(p_\beta)$.
\end{proof}

\bibliographystyle{amsalpha}
\bibliography{the}

\providecommand{\bysame}{\leavevmode\hbox to3em{\hrulefill}\thinspace}
\providecommand{\MR}{\relax\ifhmode\unskip\space\fi MR }
\providecommand{\MRhref}[2]{%
  \href{http://www.ams.org/mathscinet-getitem?mr=#1}{#2}
}
\providecommand{\href}[2]{#2}
\begin{thebibliography}{AvdDvdH17}

\bibitem[Art69]{Artin_approx}
M.~Artin, \emph{Algebraic approximation of structures over complete local rings}, Publications mathématiques de l’IHÉS \textbf{36} (1969), no.~1, 23–58.

\bibitem[AvdDvdH17]{trans}
Matthias Aschenbrenner, Lou van~den Dries, and Joris van~der Hoeven, \emph{Asymptotic differential algebra and model theory of transseries}, Annals of Mathematics Studies, vol. 195, Princeton University Press, Princeton, NJ, 2017. \MR{3585498}

\bibitem[BCR98]{real-algebraic-geometry}
J.~Bochnak, M.~Coste, and M-F. Roy, \emph{Real algebraic geometry}, Springer, 1998.

\bibitem[BLR90]{Neron}
Siegfried Bosch, Werner Lütkebohmert, and Michel Raynaud, \emph{Neron models}, Springer-Verlag, 1990.

\bibitem[DWY22]{field-top-2}
Philip Dittmann, Erik Walsberg, and Jinhe Ye, \emph{When is the \'etale open topology a field topology?}, arXiv preprint arXiv:2208.02398, to appear in Israel. J. Math. (2022).

\bibitem[Har77]{hartshorne}
Robin Hartshorne, \emph{Algebraic geometry}, Graduate Texts in Mathematics, vol.~52, Springer-Verlag, 1977.

\bibitem[JTWY24]{firstpaper}
Will Johnson, Chieu-Minh Tran, Erik Walsberg, and Jinhe Ye, \emph{The \'etale-open topology and the stable fields conjecture}, J. Eur. Math. Soc. (JEMS) \textbf{26} (2024), no.~10, 4033--4070. \MR{4768414}

\bibitem[JTWY25]{large->henselian}
\bysame, \emph{Large implies henselian}, manuscript, 2025.

\bibitem[MO15]{Mumford-Oda}
David Mumford and Tadao Oda, \emph{Algebraic geometry. {II}}, Texts and Readings in Mathematics, vol.~73, Hindustan Book Agency, New Delhi, 2015. \MR{3443857}

\bibitem[Poo17]{poonen-qpoints}
Bjorn Poonen, \emph{Rational points on varieties}, Graduate Studies in Mathematics, vol. 186, American Mathematical Society, Providence, RI, 2017. \MR{3729254}

\bibitem[Pop96]{pop-embedding}
Florian Pop, \emph{Embedding problems over large fields}, Ann. of Math. (2) \textbf{144} (1996), no.~1, 1--34. \MR{1405941}

\bibitem[Pop14]{Pop-little}
\bysame, \emph{Little survey on large fields---old \& new}, Valuation theory in interaction, EMS Ser. Congr. Rep., Eur. Math. Soc., Z\"{u}rich, 2014, pp.~432--463. \MR{3329044}

\bibitem[PW23]{with-anand}
Anand Pillay and Erik Walsberg, \emph{Galois groups of large simple fields}, Model Theory \textbf{2} (2023), 357--380.

\bibitem[Wal22]{topological_proofs}
Erik Walsberg, \emph{Topological proofs of results on large fields}, Comptes Rendus. Mathématique \textbf{360} (2022), no.~G11, 1187–1192.

\bibitem[WY23]{secondpaper}
Erik Walsberg and Jinhe Ye, \emph{\'{E}z fields}, J. Algebra \textbf{614} (2023), 611--649. \MR{4499357}

\end{thebibliography}

\end{document}